\documentclass[12pt]{article}

\usepackage{amsmath}
\usepackage{amssymb}
\usepackage{amscd}
\usepackage{amsthm}
\usepackage{indentfirst}
\usepackage[latin1]{inputenc}
\usepackage[english,frenchb]{babel}
\usepackage{pst-node}

\setlength{\marginparwidth}{0pt}
\setlength{\textwidth}{455pt}
\setlength{\topmargin}{0pt}
\setlength{\headheight}{10pt}
\setlength{\headsep}{25pt}
\addtolength{\textheight}{50pt}
\setlength{\evensidemargin}{0pt}
\setlength{\oddsidemargin}{0pt}

\theoremstyle{plain}
\newtheorem{thm}{Th\'eor\`eme}[section]
\newtheorem{lem}[thm]{Lemme}
\newtheorem{cor}[thm]{Corollaire}
\newtheorem{prop}[thm]{Proposition}

\theoremstyle{definition}
\newtheorem{dfn}[thm]{D\'efinition}
\newtheorem{ntt}[thm]{Notations}

\newtheorem{rmq}[thm]{Remarque}
\newtheorem{exm}[thm]{Exemple} 
\newtheorem{sublem}[thm]{Sous-lemme}
\usepackage{mathrsfs}

\DeclareMathOperator{\GL}{GL}

\DeclareMathOperator{\Gal}{Gal}

\DeclareMathOperator{\End}{End}

\DeclareMathOperator{\card}{card}

\DeclareMathOperator{\Jac}{Jac}

\newcommand{\ie}{{\em i. e.\ }}

\newcommand{\F}{\mathbb{F}}
\newcommand{\Z}{\mathbb{Z}}
\newcommand{\Q}{\mathbb{Q}}

\newcommand{\Pro}{\mathbb{P}}

\newcommand{\coupl}[1]{\langle #1 \rangle}
\newcommand{\T}{\mathbb{T}}

\newcommand\topbot[2]{{\genfrac{}{}{0pt}{}{{#1}}{{#2}}}}
\newcommand{\legendre}[2]{
\genfrac{(}{)}{0.5pt}{}{#1}{#2}}


\begin{document}

\title{Points rationnels sur les quotients d'Atkin-Lehner de courbes de Shimura de discriminant $pq$}

\author{Florence Gillibert}

\maketitle

\selectlanguage{english}
\begin{abstract} 
Let $p$ and $q$ be two distinct prime numbers, and $X^{pq}/w_q$ be the quotient of the Shimura curve of discriminant $pq$ by the Atkin-Lehner involution $w_q$. We describe a way to verify in wide generality a criterion of Parent and Yafaev to prove that if $p$ and $q$ satisfy some explicite congruence conditions, known as the conditions of the non ramified case of Ogg, and if $p$ is large enough compared to $q$, then the quotient $X^{pq}/w_q$ has no rational point, except possibly special points. 
\end{abstract}

\selectlanguage{francais}
\begin{abstract} 
Soient $p$ et $q$ deux nombres premiers distincts et $X^{pq}/w_q$ le quotient de la courbe de Shimura de discriminant $pq$ par l'involution d'Atkin-Lehner $w_q$. Nous d\'ecrivons un moyen permettant de v\'erifier un crit\`ere de Parent et Yafaev en grande g\'en\'eralit\'e pour prouver que si $p$ et $q$ satisfont des conditions de congruence explicites, connues comme les conditions du cas non ramifi\'e de Ogg, et si $p$ est assez grand par rapport \`a $q$, alors le quotient $X^{pq}/w_q$ n'a pas de point rationnel non sp\'ecial. 
\end{abstract}

\section{Introduction}
La recherche de quotients d'Atkin-Lehner de courbes de Shimura sans point rationnel non sp\'ecial a \'et\'e l'objet de plusieurs travaux notamment de Clark \cite{Clarkthesis}, Rotger \cite{Rotger}, Rotger, Skorobogatof et Yafaev \cite{RSY}, Bruin, Flynn, Gonz\'alez et Rotger \cite{BFGR}, ainsi que de Parent et Yafaev \cite{ParentYafaev}. Pour des \'equations de ces courbes, on peut aussi consulter les travaux (\`a para\^itre) de Molina \cite{Molina}. Ce probl\`eme est li\'e \`a une conjecture (attribu\'ee \`a Coleman) sur les anneaux d'endomorphismes potentiels de vari\'et\'es ab\'eliennes de type $\GL_2$ (cf. \cite{BFGR}). On dit qu'une vari\'et\'e ab\'elienne $A/\Q$ est de type $\GL_2$ si son alg\`ebre d'endomorphisme $\End_{\Q}(A)\otimes \Q$ est un corps de nombre de degr\'e $\dim(A)$. La conjecture est \'enonc\'ee ainsi par Clark et Mazur : pour toute dimension $g$ fix\'ee, il y a un nombre fini de classes d'isomorphismes d'anneaux d'endomorphismes sur $\overline{\Q}$ de vari\'et\'es ab\'eliennes $A/\Q$ de type $\GL_2$ de dimension $g$. Dans le cas $g=1$, ces vari\'et\'es ab\'eliennes sont les courbes elliptiques, et la liste finie des anneaux d'endomorphismes possibles est connue. L'\'etude des courbes de Shimura permet d'aborder cette conjecture dans le cas de la dimension $g=2$. En effet, \'etant donn\'es une alg\`ebre de quaternion $B_D$ de discriminant $D$, et un entier sans facteur carr\'e $m$, toute surface ab\'elienne $A/\Q$, telle que $\End_{\Q}(A)\otimes \Q=\Q(\sqrt{m})$ et $\End_{\overline{\Q}}(A)$ soit un ordre maximal de $B_D$, est associ\'ee \`a un point rationnel non sp\'ecial sur le quotient $X^D/w_m$ de la courbe de Shimura de discriminant $D$ par l'involution d'Atkin-Lehner $w_m$ (cf. \cite[thm 4.5]{BFGR}). 
L'absence de point rationnel non sp\'ecial sur $X^D/w_m$ est en fait un probl\`eme plus fort que l'absence de telles surfaces ab\'eliennes $A/\Q$; en effet de tels points rationnels peuvent correspondre \`a des surfaces ab\'eliennes \`a multiplication quaternionique d\'efinies sur une extension de $\Q$ et admettant $\Q$ comme corps de module. 

Dans cet article, nous nous restreignons au cas o\`u le discriminant $D$ de la courbe de Shimura est le produit de deux nombres premiers $p$ et $q$. L'existence de points rationnels sur $X^{pq}/w_q$ peut se produire dans deux cas d\'ecrits par Ogg (cf. proposition \ref{casdeogg}) : le \og cas ramifi\'e\fg \ et le \og cas non ramifi\'e\fg . Nous nous pla\c cons dans le cas non ramifi\'e de Ogg. Remarquons que dans ce cas l'absence de surfaces ab\'eliennes \`a multiplication quaternionique d\'ecoule d'un r\'esultat de Rotger : \cite[thm 1.4]{Rotger}. 
Dans leur article \cite{ParentYafaev}, Parent et Yafaev donnent un crit\`ere, pour l'absence de point rationnel non sp\'ecial sur un quotient d'Atkin-Lehner $X^{pq}/w_q$ de courbe de Shimura dont le discriminant est le produit de deux nombres premiers. Nous rappelons leur \'enonc\'e dans la proposition \ref{parentyafaev}. Une application imm\'ediate de ce crit\`ere leur permet de d\'eterminer un exemple de famille infinie de quotients d'Atkin-Lehner de courbes de Shimura sans point sp\'ecial, de la forme $X^{251p}/w_{251}$, mais la v\'erification de ce crit\`ere n'\'etait jusqu'alors possible que dans de tels cas particuliers. Dans cet article nous appliquons ce crit\`ere \`a des courbes $X^{pq}/w_q$ beaucoup plus g\'en\'erales. En fait pour presques toutes \`a $q$ fix\'e : pour tout nombre premier $q>245$, et tout nombre premier $p$ sup\'erieur \`a une borne (non effective) d\'ependant de $q$, si la condition du cas non ramifi\'e de Ogg est satisfaite (cf. proposition \ref{casdeogg} ci-dessous), alors nous pouvons appliquer le crit\`ere. 

\begin{thm}
\label{resfinal}
Soit $q>245$ un nombre premier avec $q \equiv 3 \mod 4$. Il existe une borne $B_q$ d\'ependant de $q$ telle que si $p$ est un nombre premier v\'erifiant $p\equiv 1 \mod 4$, $\legendre{p}{q}=-1$ et $p\ge B_q$, alors la courbe $X^{pq}/w_q$ est sans point rationnel non sp\'ecial. 
\end{thm}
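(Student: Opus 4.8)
The plan is to deduce the theorem from the criterion of Parent and Yafaev recalled in Proposition~\ref{parentyafaev}, by verifying its hypotheses for every prime $p\ge B_q$. The congruences $q\equiv 3\pmod 4$, $p\equiv 1\pmod 4$ and $\legendre{p}{q}=-1$ are exactly those of the non-ramified case of Ogg (Proposition~\ref{casdeogg}); by quadratic reciprocity they force $\legendre{-q}{p}=-1$, so that $p$ is inert in $K=\Q(\sqrt{-q})$, which is the imaginary quadratic field governing the special points in this case. The first step is thus to describe, through Eichler's theory of optimal embeddings and the attached class-number formulas, the finite set of special points on $X^{pq}/w_q$ together with the arithmetic of their fields of definition; here the non-ramified hypothesis fixes once and for all that the relevant complex multiplication is by orders in $\Q(\sqrt{-q})$, rather than in $\Q(\sqrt{-pq})$, which is the ramified case.

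Next I would analyze the reduction of $X^{pq}/w_q$ at the bad primes through the Cerednik--Drinfeld uniformization. At $p$ the special fiber is a Mumford curve whose dual graph comes from the definite quaternion algebra of discriminant $q$ (with $\asymp q/12$ supersingular vertices), while at $q$ it comes from the algebra of discriminant $p$ (with $\asymp p/12$ vertices, a number growing with $p$). A hypothetical non-special point $P\in (X^{pq}/w_q)(\Q)$ specializes, at each bad prime, to a smooth point defined over the prime field whose position is constrained by the Frobenius, and the special points specialize to prescribed supersingular points. The content of Proposition~\ref{parentyafaev} is that, combined with a formal-immersion (Hecke non-degeneracy) condition at an auxiliary prime of good reduction, such a $P$ forces a divisor class supported on $P$ and on the special points to be controlled in the N\'eron component groups and rigid modulo the good prime. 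Making the criterion explicit reduces it to a numerical inequality asserting that the special points cannot account for all the Frobenius-stable reductions a non-special rational point would require.

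The decisive step is to establish this inequality for $p$ large compared to $q$. The number of special points is bounded above, effectively, by $h(-q)+h(-pq)\ll (pq)^{1/2}\log(pq)$, whereas the reduction structure at $q$ grows like $p/12$ and the genus of $X^{pq}/w_q$ grows linearly in $pq$; comparing these asymptotics shows the inequality holds once $p$ is large. The ineffectivity of $B_q$ enters precisely where a \emph{lower} bound on an imaginary quadratic class number is required --- to ensure that the special points meet only a proper subset of the candidate supersingular points, or that the relevant divisor class has large enough order --- for which one invokes the ineffective theorem of Siegel; the threshold $q>245$ is what renders the base estimates of the criterion applicable.

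I expect the main obstacle to be the \emph{uniform} control, as $p$ varies, of two things: the specialization of the special points to supersingular points modulo the bad primes, and the non-degeneracy of the formal immersion underlying Proposition~\ref{parentyafaev}. One must rule out that, for infinitely many $p$, a non-special point could reduce to the same supersingular point as a special one, or that the Hecke/Frobenius condition degenerates. Turning the case-by-case verification available for a single $q$ (as in the example $X^{251p}/w_{251}$) into a statement valid for all large $p$ is the crux of the argument, and it is exactly what the comparison of class-number asymptotics, together with the ineffective Siegel lower bound, is designed to deliver.
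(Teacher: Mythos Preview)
Your proposal does not prove the theorem, because it never verifies the one hypothesis the Parent--Yafaev criterion actually demands. Look again at the statement of Theorem~\ref{parentyafaevgeneralise} (not Proposition~\ref{parentyafaev}: the latter still carries the extra congruence $p\equiv 5\pmod{12}$, and Section~3 is precisely there to remove it). What is needed, for each admissible $p$, is a \emph{closed path} $C_0\in Y\cap\mathbb{E}$ on the dual graph $\mathscr{G}(X^{pq}_{\F_p})$, built out of (projections of) Gross vectors, that meets the two exceptional edges of length~$2$ with multiplicity prime to $p$. Your outline---counting special points, bounding $h(-q)+h(-pq)$, invoking Siegel, comparing with the genus---never produces such a cycle, and none of those quantities enters the criterion. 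The formal-immersion and component-group ingredients you sketch are already packaged inside \cite[Theorem~5.3]{ParentYafaev}; the open problem the paper solves is the \emph{construction of $C_0$} in general, not the criterion itself.

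The paper's argument is entirely different and purely combinatorial on the graph. One compares the Gross--Shimura vectors $\gamma_D\in\mathcal{L}$ with the Gross--modular vectors $\Gamma_D\in\mathcal{P}_S$ through the edge maps $s_*,t_*$: by Corollary~\ref{vecteurdegrossshimura}, $s_*(\gamma_D)=t_*(\gamma_D)=4\Gamma_D$ provided $\gamma_D$ avoids the exceptional edges, and Proposition~\ref{intersection} guarantees this for $p$ large enough. Since by Proposition~\ref{Eisenstein} the Eisenstein vector $A_E$ lies in the $\Q$-span of the $\Gamma_{-4l^{2n}}$, one writes $\lambda_0A_E=\sum_n\lambda_n\Gamma_{-4l^{2n}}$ and sets $C=\sum_n\lambda_n\gamma_{-4l^{2n}}$; then $s_*(C)=t_*(C)=4\lambda_0A_E$, while Proposition~\ref{repar} gives $s_*(a_E)=t_*(a_E)=(p+1)A_E$. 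Hence $C_0=(p+1)C-4\lambda_0a_E$ is a cycle, lies in $\mathbb{E}$ as the projection of $(p+1)C$, and passes through each exceptional edge with multiplicity $2\lambda_0$, prime to $p$ once $p\nmid\lambda_0$. The ineffectivity of $B_q$ has nothing to do with Siegel: it comes from the implicit constants in Proposition~\ref{intersection} and in the component-group Lemma~\ref{general} (the various $O_q(\sqrt{p})$ terms and the ``$p\gg q$'' in the criterion). Your asymptotic comparison of class numbers with $p/12$ is simply not the mechanism at work here.
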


R\'esumons ici la preuve (on renvoie au texte pour les d\'efinitions et \'enonc\'es pr\'ecis) : Le crit\`ere de Parent et Yafaev proposition \ref{parentyafaev} ne peut \^etre appliqu\'e qu'\`a des couples de nombres premiers $(p,q)$ satisfaisant les conditions du cas non ramifi\'e de Ogg ( (2) proposition \ref{casdeogg}) et la condition suppl\'ementaire $p\equiv -1 \mod 3$. Nous g\'en\'eralisons ce th\'eor\`eme dans la proposition \ref{parentyafaevgeneralise} en supprimant cette derni\`ere condition. Celle-ci appara\^it dans un lemme de Parent et Yafaev sur le groupe des composantes de $\Jac(X^{pq}/w_q)_{\F_p}$. On consid\`ere $(\widetilde{{X^{pq}/w_q}})_{\F_p}$ la fibre en $p$ du mod\`ele r\'egulier de $X^{pq}/w_q$, obtenu par \'eclatement aux points singuliers du mod\`ele d\'ecrit par Cherednik et Drinfeld. Supposons que $p$ et $q$ satisfont les conditions du cas non ramifi\'e de Ogg, et que $q$ est \og assez grand\fg. Soit $\mathcal{J}$ la composante exceptionnelle de $(\widetilde{{X^{pq}/w_q}})_{\F_p}$ apparaissant apr\`es \'eclatement de l'unique point singulier d'\'epaisseur $2$, et $J\neq \mathcal{J}$ une autre composante irr\'eductible de $(\widetilde{{X^{pq}/w_q}})_{\F_p}$. 
Parent et Yafaev prouvent que si $p\equiv -1 \mod 3$, pour $p\gg q$, on a $(p+1)(J-\mathcal{J})\neq 0$ dans le groupe des composantes de $\Jac(X^{pq}/w_q)_{\F_p}$ (cf. \cite[lemma 3.1.3]{ParentYafaev}). Dans le troisi\`eme paragraphe nous g\'en\'eralisons ce lemme en montrant que le r\'esultat reste valable si $p\equiv 1 \mod 3$ (cf. lemme \ref{general}). 
 
Dans le quatri\`eme paragraphe, nous appliquons le crit\`ere de Parent-Yafaev (th\'eor\`eme \ref{parentyafaevgeneralise}) pour prouver le th\'eor\`eme \ref{resfinal}. Pour cela il suffit de construire un cycle sur le graphe $\mathscr{G}(X^{pq}_{\F_p}/w_q)$ constitu\'e de vecteurs de Gross contenant l'ar\^ete exceptionnelle de longueur $2$ avec une multiplicit\'e premi\`ere \`a $p$. Le chemin que nous construisons utilise \'egalement le vecteur d'Eisenstein qui appartient \`a l'espace engendr\'e par les vecteurs de Gross. Comme les vecteurs de Gross sont invariants par l'action de $w_q$, nous travaillons sur $\mathscr{G}(X^{pq}_{\F_p})$. 
Nous utilisons la description du graphe de $\mathscr{G}(X^{pq}_{\F_p})$ donn\'ee par Ribet. Le groupe des chemins $\mathcal{L}$ sur $\mathscr{G}(X^{pq}_{\F_p})$ est isomorphe au groupe des diviseurs sur les $p$-isog\'enies entre courbes elliptiques supersinguli\`eres en caract\'eristique $q$ \`a isomorphisme pr\`es. D'autre part le groupe des chemins $\mathcal{P}_S$ sur le graphe $\mathscr{G}(X_0(q)_{\F_{q}})$ est isomorphe au groupe des diviseurs sur les courbes elliptiques supersinguli\`eres en caract\'eristique $q$ \`a isomorphisme pr\`es. Le groupe des cycles sur $\mathscr{G}(X^{pq}_{\F_p})$ est l'intersection des noyaux de deux morphismes naturels $s_*$ et $t_*$ de $\mathcal{L}$ dans $\mathcal{P}_S$. 

Pour \'eviter la confusion entre les vecteurs de Gross sur le graphe $\mathscr{G}(X_0(q)_{\F_{q}})$ et les vecteurs de Gross sur le graphe $\mathscr{G}(X^{pq}_{\F_p})$, nous les d\'esignons respectivement par vecteurs de Gross-modulaire (cf. d\'efinition \ref{GrossModulaire}) et vecteurs de Gross-Shimura (cf. d\'efinition \ref{defvecteurdegross}). De m\^eme, le vecteur d'Eisenstein sur $\mathscr{G}(X_0(q)_{\F_{q}})$ et le vecteur d'Eisenstein sur $\mathscr{G}(X^{pq}_{\F_p})$ sont appel\'es respectivement vecteur d'Eisenstein-modulaire et vecteur d'Eisenstein-Shimura. 

Nous prouvons (sous certaines condition sur l'ordre $O_D$ de discriminant $D$) que l'image d'un vecteur de Gross-Shimura $\gamma_D\in \mathcal{L}$ par l'application $s_*$ (ou $t_*$) est le vecteur de Gross-modulaire $\Gamma_D\in \mathcal{P}_S$ correspondant au m\^eme discriminant. Construire un cycle $C$ constitu\'e de vecteurs de Gross-Shimura, revient \`a construire un chemin nul en vecteurs de Gross-modulaire. Pour trouver un cycle contenant les ar\^etes exceptionnelles avec une multiplicit\'e $\lambda$ premi\`ere \`a $p$, nous construisons un cycle de la forme $C_0=C+\lambda a_E$ o\`u $C$ est un chemin en vecteurs de Gross ne contenant pas l'ar\^ete exceptionnelle et $a_E$ est le vecteur d'Eisenstein-Shimura. Pour cela, nous nous ramenons \`a construire une combinaison lin\'eaire de vecteurs de Gross-modulaires \'egale au vecteur d'Eisenstein-modulaire en utilisant uniquement des vecteurs associ\'es \`a des  \og bons \fg\ ordres.

\section{Vecteurs de Gross}
Pour toute la suite, $p$ et $q$ sont deux nombres premiers distincts, tous deux $\ge 5$. Dans ce paragraphe, nous allons rappeler la notion de vecteurs de Gross sur le graphe dual de la fibre en $q$ de la courbe modulaire de niveau $q$, et sur le graphe dual de la fibre en $p$ d'une courbe de Shimura de discriminant $pq$. Nous invitons le lecteur \`a se r\'ef\'erer \`a \cite{Clarkthesis} et \cite{Ribet} pour une introduction aux courbes de Shimura, \`a \cite{Gross} pour la d\'efinition des vecteurs de Gross sur les courbes modulaires de niveau premier, et \`a \cite[\S 4]{ParentYafaev} pour le cas des vecteurs de Gross sur les courbes de Shimura de discriminant $pq$.

Dans les paragraphes suivants, on consid\'erera l'existence \'eventuelle de points rationnels du quotient de la courbe de Shimura $X^{pq}$ par l'involution d'Atkin-Lehner $w_q$. Cette existence de points rationnels ne peut se produire que dans les deux cas d\'ecrits par Ogg dans la proposition suivante. 
\begin{prop}
\label{casdeogg}
Si $X^{pq}/w_q(\Q)$ est non vide, alors $\legendre{p}{q}=-1$, et une des deux conditions suivante est satisfaite
\begin{itemize}
\item[(1)] $p\equiv 3 \mod 4$; (cas ramifi\'e)
\item[(2)] $p\equiv 1 \mod 4$ et $q\equiv 3 \mod 4$ (cas non ramifi\'e). 
\end{itemize}
\end{prop}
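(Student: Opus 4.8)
The statement is due to Ogg, and the natural route is a local-global necessary-condition analysis: a rational point on the smooth projective curve $X^{pq}/w_q$ produces a $\Q_v$-rational point for every place $v$ of $\Q$, and I would extract the three stated constraints from the places $v=\infty$, $v=p$ and $v=q$ separately. The plan is therefore to understand $(X^{pq}/w_q)(\R)$, $(X^{pq}/w_q)(\Q_p)$ and $(X^{pq}/w_q)(\Q_q)$, and to show that their simultaneous nonemptiness forces $\legendre{p}{q}=-1$ together with the dichotomy on $p,q \bmod 4$.

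First, the archimedean place. The Shimura curve $X^{pq}$ attached to the indefinite quaternion algebra of discriminant $pq$ has no real points, $X^{pq}(\R)=\emptyset$ (a classical theorem of Shimura, valid since the algebra is a division algebra). Hence a real point of the quotient can come only from a complex point $P$ with $\overline{P}=w_q(P)$, i.e.\ from a fixed point of the anti-holomorphic involution $c\circ w_q$, where $c$ is complex conjugation. By the theory of complex conjugation on Shimura curves (Shimura, Ogg), such fixed points are CM points whose CM field $K$ embeds into the quaternion algebra and is compatible with $w_q$; concretely they correspond to optimal embeddings of orders in the fields $\Q(\sqrt{-q})$, $\Q(\sqrt{-p})$ and $\Q(\sqrt{-pq})$. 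By Eichler's embedding criterion, such a field embeds iff neither $p$ nor $q$ splits in it; rewriting the resulting local conditions via $\legendre{-q}{p}=\legendre{-1}{p}\legendre{q}{p}$ and quadratic reciprocity yields $\legendre{p}{q}=-1$, while the behaviour of the dyadic place (ramification of $2$ in $\Q(\sqrt{-p})$, governed by $p\bmod 4$) produces exactly the split between the ramified case $p\equiv 3 \bmod 4$ and the non-ramified case $p\equiv 1,\ q\equiv 3\bmod 4$.

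Next, the non-archimedean places $p$ and $q$. Here I would invoke the Cherednik-Drinfeld theorem, which presents the special fibre of the semistable model of $X^{pq}$ over $\Z_p$ as a $p$-adic Mumford uniformization whose dual graph and whose Atkin-Lehner action are governed by the arithmetic of the quaternion algebra; the analogous statement holds over $\Z_q$. A $\Q_p$-point (resp.\ $\Q_q$-point) of $X^{pq}/w_q$ then exists essentially when the corresponding quotient of the special fibre carries a smooth point over the residue field, which lifts by Hensel's lemma. Reading off the combinatorics of the reduction graph and the action of $w_q$ on components confirms the same necessary conditions, and in particular re-derives $\legendre{p}{q}=-1$ as the requirement ensuring the relevant component structure is present.

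The main obstacle is the archimedean step: one must identify precisely which CM fields contribute real points to the quotient $X^{pq}/w_q$ (as opposed to $X^{pq}$ itself or other partial quotients), verify that the fixed-point set of $c\circ w_q$ is nonempty under exactly the stated congruences, and carry out the Eichler-embedding and reciprocity bookkeeping at $p$, $q$ and $2$ without sign errors. The $p$-adic and $q$-adic steps are comparatively mechanical once the Cherednik-Drinfeld model and the induced $w_q$-action on its dual graph are written down.
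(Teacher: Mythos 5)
The paper offers no argument of its own here: it simply cites \cite[thm 3.1]{RSY} (Ogg's analysis), and the architecture of that proof is indeed the one you propose --- necessary conditions extracted place by place at $\infty$, $p$ and $q$, using Ogg's theory of real points and the Cherednik--Drinfeld description at the bad primes. Your execution of the archimedean step, however, contains a genuine error. The fixed points of the anti-holomorphic involution $c\circ w_q$ are \emph{not} CM points attached to the imaginary quadratic fields $\Q(\sqrt{-q})$, $\Q(\sqrt{-p})$, $\Q(\sqrt{-pq})$; those fields govern the fixed points of the \emph{holomorphic} involutions $w_q$, $w_p$, $w_{pq}$, i.e.\ the ramification points of the quotient maps. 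By Ogg's theorem, the fixed locus of $c\circ w_q$ on $X^{pq}$ is nonempty if and only if the \emph{real} quadratic field $\Q(\sqrt{q})$ embeds into $B_{pq}$, which (since $q$ ramifies in $\Q(\sqrt{q})$) amounts to the single condition $\legendre{q}{p}=-1$. The real place therefore yields exactly one Legendre-symbol condition and nothing more: it cannot produce the dichotomy on $p \bmod 4$, and your appeal to ``the dyadic place'' is a red herring --- the prime $2$ plays no role in this statement.

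This misattribution is not cosmetic, because the congruences modulo $4$ and the symbol $\legendre{p}{q}=-1$ come precisely from the finite places $p$ and $q$, which your plan demotes to ``comparatively mechanical'' confirmation. Concretely, take $p=5$, $q=13$: then $\legendre{13}{5}=\legendre{5}{13}=-1$, so $(X^{65}/w_{13})(\R)\neq\emptyset$ by Ogg's criterion, yet $p\equiv q\equiv 1 \bmod 4$ is excluded by the proposition; the obstruction lives at a finite place, not at infinity. Moreover, your mechanism at those places (``a smooth point over the residue field, which lifts by Hensel's lemma'') fails at $p$: because the Cherednik--Drinfeld uniformization involves a quadratic unramified twist, Frobenius exchanges the two sides $S_1$, $S_2$ of the bipartite dual graph, while $w_q$ preserves each side; hence no component of the special fiber of $X^{pq}/w_q$ at $p$ is Frobenius-stable, the smooth locus has no $\F_p$-point, and $\Q_p$-points must reduce to \emph{singular} points. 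Their existence is then governed by the split/non-split type and the thickness parity of those double points (the Jordan--Livn\'e style analysis carried out in \cite{RSY}), and it is exactly this bookkeeping --- which your proposal defers --- that produces $\legendre{p}{q}=-1$ and the ramified/non-ramified dichotomy. So the deferred step is not a technical verification; it is the substance of the theorem.
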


\begin{proof}
Consulter \cite[thm 3.1]{RSY}. 
\end{proof}
Nous nous placerons pour ce travail dans le cas non ramifi\'e.

\subsection{Vecteurs de Gross sur le graphe modulaire}

D'apr\`es le travail de Deligne et Rapoport (cf. \cite[th\'eor\`eme 6.9 et exemple 6.16]{DR}) la fibre en $q$ de la courbe modulaire $X_0(q)$ est constitu\'ee de deux composantes irr\'eductibles $s_1$ et $s_2$ isomorphes \`a $\Pro^1(\F_q)$ se coupant transversalement. Il d\'ecoule de cette description que le graphe dual de la fibre en $q$ de la courbe modulaire $X_0(q)$, not\'e $\mathscr{G}((X_0(q))_{\F_q})$, est constitu\'e de deux sommets $s_1$ et $s_2$ et d'ar\^etes entre $s_1$ et $s_2$. Ces ar\^etes correspondent \`a l'ensemble $S$ des classes d'isomorphismes de courbes elliptiques supersinguli\`eres sur $\overline{\F}_{q}$. On note $E_{j_1},\dots, E_{j_{g+1}}$ un syst\`eme de repr\'esentants de $S$, o\`u les $j_k$ parcourent le sous-ensemble de $\F_{q^2}$ constitu\'e par les $j$-invariants supersinguliers, et $g$ est le genre de $X_0(q)$. 
Soit $B_{q\infty}$ l'alg\`ebre de quaternions sur $\Q$ ramifi\'ee en $q$ et l'infini. \`A la classe d'isomorphismes de la courbe elliptique $E_{j_k}$, on associe l'ordre maximal $\Omega_k=\End(E_{j_k}) \subseteq B_{q\infty}$.

Le cardinal de $(\Omega_k^*/\{\pm 1\})$ est appel\'e la longueur de l'ar\^ete de $\mathscr{G}((X_0(q))_{\F_q})$ correspondant \`a $E_{j_k}$. D'apr\`es \cite[theorem 10.1 p. 103]{Silv2}, si $j_k\neq 0,1728$ ce cardinal est \'egal \`a 1. Il y a donc au plus deux ar\^etes de longueur $>1$.

On d\'esigne par $\mathcal{P}_S$ l'ensemble des chemins sur le graphe $\mathscr{G}((X_0(q))_{\F_q})$, c'est le $\Z$-module des diviseurs sur $S$. On d\'efinit le degr\'e d'un chemin par : $\text{degr\'e}(\sum_{k=1}^{g+1}\lambda_{j_k}E_{j_k})=\sum_{k=1}^{g+1} \lambda_{j_k}. $
 On note $\mathcal{P}_S^0$ le $\Z$-module des diviseurs de degr\'e $0$ sur $S$. La $\Z$-alg\`ebre $\T_{\Gamma_0(q)}$ engendr\'ee par les op\'erateurs de Hecke $T_l$, pour $l\neq q$ premier, agit sur les groupes $\mathcal{P}_S$ et $\mathcal{P}_S^0$ par l'action $T_l(E)=\sum_{C_l\subset E}E/C_l$, o\`u $E\in S$ et $C_l$ parcourt l'ensemble des sous-groupes d'ordre $l$ de $E$. 
\begin{dfn}
L'accouplement de monodromie (Cf. \cite[Expos\'e 9 \S 9]{SGA7-I}) est la forme bilin\'eaire non d\'eg\'en\'er\'ee $\coupl{\cdot,\cdot}$  sur $\mathcal{P}_S$ d\'efinie par : 
$$\coupl{E_{j_k},E_{j_l}}=\delta_{j_k,j_l}\card(\Omega_k^*/\{\pm 1\}),$$
o\`u $\delta_{j_k,j_l}$ est le symbole de Kronecker. Ainsi $E_{j_1}, \dots E_{j_{g+1}}$ est une base orthogonale pour cet accouplement. Chaque $E_{j_k}$ est de norme $1$ pour cet accouplement, sauf si $j_k=0$ ou $1728$. 
\end{dfn}

\begin{dfn}
\label{EisensteinModulaire}
Le vecteur d'Eisenstein (ou vecteur d'Eisenstein-modulaire) $A_E\in \frac{1}{6}\mathcal{P}_S$ est : 
$$A_E=\sum_{k=1}^{g+1} \frac{1}{\card(\Omega_k^*/\{\pm 1\})}E_{j_k}.$$
\end{dfn}
Le vecteur d'Eisenstein $A_E$ forme une base de $(\mathcal{P}_S^0)^{\bot}$, l'espace orthogonal \`a $\mathcal{P}_S^0$ pour cet accouplement. Il est choisi de telle sorte que, pour $v\in \mathcal{P}_S$, le degr\'e de $v$ est $\coupl{v,A_E}$.

\begin{dfn}
\label{GrossModulaire}
Soit $O_{D}$ l'ordre quadratique imaginaire de discriminant $D$. On lui associe un vecteur $\Gamma_D\in\frac{1}{12}\mathcal{P}_S$, appel\'e \emph{vecteur de Gross sur le graphe modulaire} (ou vecteur de Gross-modulaire) : 
$$\Gamma_D=\frac{1}{2u(D)}\sum_{k=1}^{g+1} H_k(D)E_{j_k},$$
o\`u $H_k(D)$ est le nombre de plongements optimaux de $O_D$ dans $\Omega_k$ modulo conjugaison par $\Omega_k^*$ (cf. \cite[\S 1, p 122]{Gross}), et $u(D)=\card(O_D^*/\{ \pm 1\})$. On a $u(D)=1$, sauf pour $D=-3$ ou $-4$. De plus on a $u(-3)=3$, $u(-4)=2$. 
\end{dfn}
Par la formule des traces d'Eichler, lorsque $q$ est scind\'e dans $O_D$ ou lorsque $q$ divise le conducteur de $O_D$, il n'existe pas de plongement de $O_D$ dans $B_{q\infty}$ (voir \cite[\S 1, p 122]{Gross}). Le vecteur $\Gamma_D$ est non nul si $q$ est inerte ou ramifi\'e dans $O_D$ et $q^2$ ne divise pas $D$. La proposition suivante donne une interpr\'etation des vecteurs de Gross en terme de r\'eduction modulo $q$ des courbes elliptiques d\'efinies sur $\overline{\Q}$ ayant multiplication complexe par $O_D$. 
Nous nous pla\c cons dans le cas o\`u $q$ est inerte dans $O_D$, car nous ne manipulerons dans la suite que de tels vecteurs de Gross. 
\begin{prop}
\label{vecteurdegrossmodulaire}
Soit $O_D$ un ordre quadratique imaginaire de discriminant $D$ et de nombre de classe $h(D)$, tel que $q$ soit inerte dans $O_D$. Fixons $\mathcal{Q}$ une place de $\overline{\Q}$ au dessus de $q$. Le vecteur $u(D)\Gamma_D$ est constitu\'e de la somme des r\'eductions (non forc\'ement distinctes) modulo $\mathcal{Q}$ des $h(D)$ courbes elliptiques sur $\overline{\Q}$ ayant multiplication complexe par $O_D$. 
\end{prop}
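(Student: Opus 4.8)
The plan is to deduce the statement from Deuring's theory of reduction of elliptic curves with complex multiplication, following Gross \cite{Gross}. First I would record that, since $q$ is inert in $O_D$, Deuring's reduction criterion guarantees that each of the $h(D)$ curves $E_i/\overline{\Q}$ with $\End(E_i)\cong O_D$ has supersingular reduction at $\mathcal{Q}$; write $\overline{E_i}\cong E_{j_{k(i)}}$. Reduction modulo $\mathcal{Q}$ is injective on endomorphism rings, so it embeds $O_D=\End(E_i)$ into the maximal order $\Omega_{k(i)}=\End(E_{j_{k(i)}})$ of $B_{q\infty}$. I would then check that this embedding is \emph{optimal}, i.e. that $O_D=\Omega_{k(i)}\cap(O_D\otimes\Q)$: this is exactly Deuring's statement that the CM order is recovered as the full intersection of the image field with the quaternionic maximal order, using that $q$ is prime to the conductor of $O_D$ (which holds because $q$ is inert). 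Thus every CM curve contributes, via reduction, an optimal embedding of $O_D$.

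Next I would treat the converse and the count. By Deuring's lifting theorem every optimal embedding $O_D\hookrightarrow\Omega_k$ is realised by the reduction of some CM curve, so the reduction map hits precisely the supersingular points carrying such an embedding. The heart of the proof is then to identify the coefficient of $E_{j_k}$ in the divisor $\sum_{i=1}^{h(D)}[\overline{E_i}]$ — the number of the $h(D)$ curves reducing to $E_{j_k}$ — with the coefficient $\tfrac12 H_k(D)$ of $E_{j_k}$ in $u(D)\Gamma_D=\tfrac12\sum_k H_k(D)E_{j_k}$. For this I would set up a bijection between the isomorphism classes of CM curves reducing to $E_{j_k}$ and the $\Omega_k^*$-conjugacy classes of optimal embeddings of $O_D$ into $\Omega_k$, taken up to the involution $\phi\mapsto\bar\phi$ sending an embedding to its complex conjugate. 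The factor $\tfrac12$ reflects that the (unoriented) isomorphism class of a CM curve determines only the pair $\{\phi,\bar\phi\}$, whereas $H_k(D)$ counts the two members separately; the factor $u(D)=\card(O_D^*/\{\pm1\})$ in the statement, paired against the normalisation $\tfrac1{2u(D)}$ in the definition of $\Gamma_D$, absorbs the size of the automorphism group $O_D^*$ of the CM curves.

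The step I expect to be the main obstacle is this last multiplicity bookkeeping, and especially its behaviour at the exceptional vertices where $j_k\in\{0,1728\}$ (equivalently $\Omega_k^*\neq\{\pm1\}$) and at the exceptional discriminants $D\in\{-3,-4\}$ (where $u(D)>1$). There the extra units enlarge some conjugacy orbits and may identify $\phi$ with $\bar\phi$, so the naive count must be corrected, and one must verify that the factors $u(D)$ and $\card(\Omega_k^*/\{\pm1\})$ built into $\Gamma_D$ and into the monodromy pairing compensate these corrections exactly, making the identity $u(D)\Gamma_D=\sum_i[\overline{E_i}]$ hold uniformly in $k$. As a consistency check I would match degrees: the degree of the left-hand side is manifestly $h(D)$, so the degree $\tfrac12\sum_k H_k(D)$ of $u(D)\Gamma_D$ must equal $h(D)$, which is precisely the relevant case of Eichler's mass formula for optimal embeddings into the maximal orders of $B_{q\infty}$, the local factor at $q$ being $1-\legendre{D}{q}=2$ because $q$ is inert.
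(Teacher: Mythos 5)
Your overall strategy is the same as the paper's: the paper invokes Gross's account of the Deuring correspondence (pairs $(E,g)$, a CM curve with a fixed isomorphism $g\colon O_D\to\End(E)$, taken up to isomorphism after reduction, correspond to optimal embedding classes), and then counts. However, your proposal has a genuine gap exactly at the step you yourself flag as ``the main obstacle''. To get the coefficient identity $\#\{i:\overline{E_i}\cong E_{j_k}\}=\tfrac12 H_k(D)$ you need two things you do not prove: (a) the involution $\phi\mapsto\bar\phi$ acts \emph{freely} on the $\Omega_k^*$-conjugacy classes of optimal embeddings arising from CM curves, and (b) two non-isomorphic CM curves never produce conjugate embeddings (injectivity of the correspondence, which your appeal to Deuring lifting --- existence only --- does not give). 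Moreover, your suggested repair mechanism cannot work as stated: in the identity to be proved the factor $u(D)$ cancels identically, since $u(D)\Gamma_D=\tfrac12\sum_k H_k(D)E_{j_k}$, so there is no normalisation left to ``compensate'' a collapsed orbit; if $\phi$ were ever conjugate to $\bar\phi$, or if two distinct curves shared an embedding class, the proposition would simply be false at that vertex. What must be shown is that these coincidences never occur.

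The paper closes this gap by promoting what you relegate to a ``consistency check'' into the engine of the proof. One first shows the map from the $2h(D)$ pairs $(E_l,f)$ (curve together with one of its two CM structures) to optimal embedding classes is \emph{surjective}: every CM curve over the relevant complete DVR $W$ is a twist of some $E_l$, and twists become isomorphic, compatibly with the CM action, over the residue field $\overline{\F}_q$. Eichler's trace formula then says the target has exactly $\bigl(1-\legendre{D}{q}\bigr)h(D)=2h(D)$ elements. A surjection between finite sets of equal cardinality is a bijection, so \emph{a posteriori}, uniformly in $k$ and with no case analysis at $j_k\in\{0,1728\}$ or $D\in\{-3,-4\}$, the two CM structures on a curve give inequivalent embeddings and distinct curves give inequivalent embeddings --- precisely the freeness and injectivity you need. (Your local analysis can in fact be salvaged --- for instance at $j_k=1728$ an embedding conjugate to its bar would force $q\mid D$, contradicting inertness --- but it is strictly harder and still would not give (b) without a uniqueness statement for Deuring lifting.) So the missing idea is not a new ingredient: you already hold Eichler's formula, but it must be used to \emph{force} the bijection rather than to check degrees after the fact.
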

\begin{proof}
 Par la formule des traces d'Eichler (cf. \cite[thm 5.11, p 92]{Vigneras}), le nombre de plongements optimaux de $O_D$ dans les ordres maximaux de $B_{q\infty}$ est 
$$\sum_{i=1}^{g+1} H_i(D)=\bigg(1-\legendre{D}{q}\bigg)h(D)=2h(D).$$

D'autre part, d'apr\`es le th\'eor\`eme fondamental de la multiplication complexe, il existe $h(D)$ courbes elliptiques $E_1/\overline{\Z}$,$E_2/\overline{\Z}$,\dots,$E_{h(D)}/\overline{\Z}$ \`a $\overline\Q$-isomorphisme pr\`es ayant multiplication complexe par $O_D$. Pour chacune de ces courbes elliptiques, il existe deux isomorphismes conjugu\'es $f_1,f_2\colon O_D\rightarrow \End(E)$. La place $\mathcal{Q}$ d\'efinit une inclusion de $\overline{\Z}$ dans une cl\^oture alg\'ebrique de $O_D\otimes \Z_q$. Soit $W$ un anneau de valuation discr\`ete complet contenant $O_D\otimes \Z_q$, de corps r\'esiduel $\overline{\F}_q$, et tel que $E_{1},\dots, E_{h(D)}$ sont d\'efinies sur $W$. On note $q'$ l'uniformisante de $W$. On consid\`ere l'ensemble des couples $(E,g)$ constitu\'es d'une courbe elliptique $E/W$ \`a multiplication complexe par $O_D$ sur $W$, muni d'un isomorphisme fix\'e $g\colon O_D\rightarrow \End(E)$. On dit que les couples $(E,g)$ et $(E',g')$ sont \'equivalents s'il existe un isomorphisme $i\colon (E\mod q')$ $\rightarrow (E' \mod q')$ tel que $(g'(\alpha)\mod q')\circ i=i\circ (g(\alpha)\mod q')$ pour tout $\alpha\in O_D$. D'apr\`es Gross \cite[fin du \S 2 pp. 128-129]{Gross}, les plongements optimaux de $O_D$ dans les ordres maximaux de $B_{q\infty}$ correspondent aux couples $(E,g)$, \`a \'equivalence pr\`es. Le plongement optimal associ\'e \`a un tel couple $(E,g)$ est obtenu par composition de $g\colon O_D\rightarrow\End(E)$ et de la r\'eduction $\End(E)\rightarrow \End(E\mod q')$. Toute courbe elliptique $E/W$ \`a multiplication complexe par $O_D$ est une tordue d'une des courbes $E_{1},\dots E_{h(D)}$. Donc tout couple $(E,g)/W$ est \'equivalent \`a un des couples $(E_l,f)/W$. Ainsi chacun des $2h(D)$ plongement optimaux de $O_D$ dans les ordres maximaux de $B_{q\infty}$ correspond \`a d'un des $2h(D)$ couples $(E_l,f)/\overline{\Z}$. Cette correspondance est bijective par \'egalit\'e des cardinaux. 
Pour conclure il suffit de voir que : 
$$2u(D)\Gamma_D=\sum_{k=1}^{g+1} \sum_{\topbot{O_D \rightarrow\Omega_k}{\text{optimal}}} E_{j_k}. $$
Donc on a 
$$2u(D)\Gamma_D=\sum_{\topbot{(E,g)/W} {\text{\`a \'equiv. pr\`es}}} (E \mod q') =2(E_1+\dots +E_{h(D)}) \mod \mathcal{Q}.$$

\end{proof}

\begin{prop}
\label{epaisseur}
Toute ar\^ete est de longueur $1$ sauf dans les deux cas suivants : 
\begin{itemize}
\item si $q$ est inerte dans $O_{-4}$, alors $1728 \mod q$ est un $j$-invariant supersingulier et l'ar\^ete associ\'ee \`a la courbe elliptique $E_{1728}$ de $j$-invariant $1728 \mod q$ est de longueur $2$. Plus pr\'ecis\'ement $O_{-4}=\Z[\zeta_4]$ se plonge dans $\End{E_{1728}}$. 
\item Si $q$ est inerte dans $O_{-3}$, alors $0 \mod q$ est un $j$-invariant supersingulier et l'ar\^ete associ\'ee \`a la courbe elliptique $E_0$ de $j$-invariant $0$ est de longueur $3$. Plus pr\'ecis\'ement $O_{-3}=\Z[\zeta_6]$ se plonge dans $\End{E_{0}}$. 
\end{itemize}
\end{prop}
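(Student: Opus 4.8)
The plan is to reduce the statement to the two exceptional $j$-invariants $0$ and $1728$, and then to compute the automorphism group of the corresponding supersingular curve together with the exact condition on $q$ under which that curve is supersingular. Recall first that, by definition, the length of the edge attached to $E_{j_k}$ is $\card(\Omega_k^*/\{\pm 1\})$, and that $\Omega_k^* = \End(E_{j_k})^* = \Aut(E_{j_k})$, since an endomorphism of an elliptic curve is an automorphism exactly when it is a unit of the endomorphism ring. By the theorem of Silverman already cited, a supersingular curve over $\overline{\F}_q$ with $j \neq 0, 1728$ satisfies $\Aut(E) = \{\pm 1\}$ (we are in characteristic $q \geq 5$, so $\neq 2,3$), hence such an edge has length $1$. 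It therefore only remains to treat $j = 1728$ and $j = 0$, and for each to determine precisely when the curve is supersingular.

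For $j = 1728$ I would take the model $E_{1728}\colon y^2 = x^3 - x$, which has complex multiplication by $O_{-4} = \Z[\zeta_4]$ over $\overline{\Q}$, the order-$4$ automorphism $[\zeta_4]\colon (x,y) \mapsto (-x, \sqrt{-1}\,y)$ giving $\Aut(E_{1728}) \cong \mu_4$ in characteristic $\neq 2,3$. Since $q \geq 5$ is unramified in $\Q(\zeta_4)$, Deuring's reduction criterion shows that the reduction of $E_{1728}$ modulo $q$ is supersingular if and only if $q$ is inert in $O_{-4}$, that is $\legendre{-4}{q} = -1$, equivalently $q \equiv 3 \mod 4$. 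In that case $O_{-4} = \Z[\zeta_4]$ embeds in $\End(E_{1728})$ and the length of the edge is $\card(\mu_4/\{\pm 1\}) = 2$. Symmetrically, for $j = 0$ I would take $E_0\colon y^2 = x^3 - 1$, with complex multiplication by $O_{-3} = \Z[\zeta_6]$ and order-$6$ automorphism $[\zeta_6]\colon (x,y) \mapsto (\zeta_3 x, -y)$, giving $\Aut(E_0) \cong \mu_6$. The same criterion yields that the reduction is supersingular if and only if $q$ is inert in $O_{-3}$, i.e. $\legendre{-3}{q} = -1$, equivalently $q \equiv 2 \mod 3$; then $O_{-3} = \Z[\zeta_6]$ embeds in $\End(E_0)$ and the edge has length $\card(\mu_6/\{\pm 1\}) = 3$. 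Combining the three cases gives exactly the stated dichotomy, since every other edge has length $1$ and there is at most one edge attached to each special $j$-invariant.

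The point requiring the most care is the pair of equivalences \emph{``the special curve is supersingular modulo $q$ if and only if $q$ is inert in its CM order''}, for which I would invoke Deuring's criterion on the reduction of a CM elliptic curve at a prime of good, unramified reduction; this is where the hypothesis $q \geq 5$ is used, both to guarantee that $q$ is unramified in $\Q(\zeta_4)$ and $\Q(\zeta_6)$ and to ensure that the geometric automorphism groups are exactly $\mu_4$ and $\mu_6$ rather than the larger groups occurring in characteristics $2$ and $3$. The remaining steps — identifying $\Omega_k^*$ with $\Aut(E_{j_k})$, reading off the orders of $\mu_4$ and $\mu_6$, and exhibiting the CM embeddings — are routine.
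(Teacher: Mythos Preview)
Your argument is correct and follows essentially the same route as the paper's proof: both reduce to Silverman's classification of $\Aut(E)$ in characteristic $\neq 2,3$, and then identify when the special $j$-invariants $0$ and $1728$ are supersingular. The only cosmetic difference is that the paper phrases the supersingularity condition as ``$O_{-4}$ (resp.\ $O_{-3}$) embeds in some maximal order of $B_{q\infty}$'' (invoking implicitly the splitting criterion for quadratic orders in quaternion algebras), whereas you invoke Deuring's reduction criterion on the CM curve directly; these are equivalent formulations of the same fact.
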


\begin{proof}
D\'ecoule de \cite[thm 10.1 p. 103]{Silv1} et du fait qu'une ar\^ete de longueur $2$ (respectivement de longueur $3$) correspond \`a un ordre maximal de $B_{q\infty}$ dans lesquels se plonge $O_{-4}$ (respectivement $O_{-3}$). 

\end{proof}

L'alg\`ebre de Hecke $\T_{\Gamma_0(q)}$ agit sur l'espace $S_2(\Gamma_0(q))$ des formes modulaires paraboliques primitives $f$ de poids $2$ sur $\Gamma_0(q)$. On d\'esigne par $I_e$ l'id\'eal d'enroulement de $\T_{\Gamma_0(q)}$, c'est-\`a-dire l'ensemble des op\'erateurs de $\T_{\Gamma_0(q)}$ annulant toutes les formes modulaires de $S_2(\Gamma_0(q))$ telles que $L(f,1)\neq 0$. Pour une forme modulaire primitive $f$, la formule de Gross \cite[Cor. 11.6, p. 167]{Gross} et de fa\c con plus g\'en\'erale un th\'eor\`eme de Waldspurger \cite[p 397]{LR}, relie la valeur de $L(f,1)$ \`a la norme de l'image $\Gamma_{f,D}$ du vecteur de Gross $\Gamma_{D}$ par l'idempotent primitif $1_f\in \T$ associ\'e \`a $f$. On en d\'eduit la proposition suivante. 

\begin{prop}
\label{GrossApplication}
L'espace engendr\'e par les projections orthogonales des vecteurs de Gross de discriminant $D$ premier \`a $q$ sur $\mathcal{P}_S\otimes \Q$ est $\mathcal{P}_S[I_e] \otimes \Q$. 
\end{prop}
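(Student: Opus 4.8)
The plan is to compare the two sides after decomposing $\mathcal{P}_S\otimes\Q$ into Hecke eigenspaces. Since $q$ is prime, every $f\in S_2(\Gamma_0(q))$ is a newform and multiplicity one holds, so $\T_{\Gamma_0(q)}\otimes\Q=\prod_f \Q_f$ (product over the primitive idempotents $1_f$), and the Eichler--Shimura isomorphism identifies $\mathcal{P}_S^0\otimes\Q$ with a free rank one module over $\T_{\Gamma_0(q)}\otimes\Q$; concretely $\mathcal{P}_S\otimes\Q=\Q A_E\oplus\bigoplus_f V_f$ with $V_f=1_f(\mathcal{P}_S^0\otimes\Q)\cong\Q_f$. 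By definition of $I_e$, for the cuspidal eigensystem attached to $f$ one has $1_f\in I_e\otimes\Q$ precisely when $L(f,1)=0$; hence $\mathcal{P}_S[I_e]\otimes\Q=\bigoplus_{L(f,1)\neq0}V_f$, once one checks that $I_e$ does not annihilate the Eisenstein eigensystem (for which $T_\ell$ acts by $\ell+1$), so that $A_E\notin\mathcal{P}_S[I_e]$. As the orthogonal projection removes exactly the $A_E$-component, it then suffices to identify the span of the projected $\Gamma_D$ inside $\bigoplus_f V_f$ with $\bigoplus_{L(f,1)\neq0}V_f$.

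For the inclusion $\subseteq$, I would invoke the formula of Gross and Waldspurger recalled before the statement: for each eigenform $f$ the $f$-isotypic projection $\Gamma_{f,D}=1_f\Gamma_D$ satisfies that $\coupl{\Gamma_{f,D},\Gamma_{f,D}}$ equals, up to an explicit nonzero archimedean factor, the product $L(f,1)\,L(f\otimes\chi_D,1)$. Thus $\Gamma_{f,D}\neq0$ forces $L(f,1)\neq0$. Writing the projected Gross vector as $\sum_f\Gamma_{f,D}$, every nonzero summand lies in a $V_f$ with $L(f,1)\neq0$, so the whole span $W$ sits inside $\bigoplus_{L(f,1)\neq0}V_f=\mathcal{P}_S[I_e]\otimes\Q$.

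For the reverse inclusion I would fix $f$ with $L(f,1)\neq0$ and produce a discriminant $D$ with $\Gamma_{f,D}\neq0$. By the same formula this amounts to finding an imaginary quadratic $D$, prime to $q$ and with $q$ inert (so $\chi_D(q)=-1$), such that $L(f\otimes\chi_D,1)\neq0$. Since $D<0$ one has $\chi_D(-1)=-1$, whence $\chi_D(-q)=+1$, and the sign of the functional equation of $L(f\otimes\chi_D,s)$ equals $\epsilon(f)$, which is $+1$ because $L(f,1)\neq0$; the root number therefore does not obstruct non-vanishing. A non-vanishing theorem for quadratic twists in prescribed congruence classes then yields infinitely many such $D$ with $L(f\otimes\chi_D,1)\neq0$, so $\Gamma_{f,D}\neq0$ and $W\cap V_f\neq0$. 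Using that the $\Q$-span of the Gross vectors is stable under $\T_{\Gamma_0(q)}$ (the Hecke action on optimal-embedding numbers expresses $T_\ell\Gamma_D$ as a combination of Gross vectors of related discriminants, via the Brandt-matrix/class-number relations) together with $V_f\cong\Q_f$ being irreducible over $\T_{\Gamma_0(q)}\otimes\Q$, a single nonzero $\Gamma_{f,D}$ already generates all of $V_f$. Summing over the $f$ with $L(f,1)\neq0$ gives $\supseteq$, and hence equality.

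The main obstacle is the non-vanishing of the twists $L(f\otimes\chi_D,1)$ under the imposed local condition at $q$: the sign computation above guarantees the root number is $+1$, so that the analytic non-vanishing results apply, but one must phrase them so as to respect $\gcd(D,q)=1$, $q$ inert, and $D$ fundamental (or else deal directly with non-maximal orders $O_D$). A secondary technical point is the bookkeeping for the Eisenstein line and for the exceptional discriminants $D=-3,-4$ where $u(D)\neq1$, together with the verification that $I_e$ acts non-trivially on the Eisenstein eigensystem, which is what makes the two sides of the asserted equality coincide exactly.
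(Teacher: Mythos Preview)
The paper does not give a proof of this proposition at all: it simply refers the reader to \cite[prop.~4.2]{Parent}. Your proposal therefore supplies what the paper omits, and your outline is essentially the standard argument behind Parent's result. The decomposition $\mathcal{P}_S\otimes\Q=\Q A_E\oplus\bigoplus_f V_f$, the use of the Gross/Waldspurger formula to get the inclusion $\subseteq$, and the appeal to a quadratic-twist non-vanishing theorem (with the sign computation $\chi_D(-q)=+1$ ensuring the right root number) for $\supseteq$ are exactly the ingredients one finds in Parent's proof; your identification of the non-vanishing input as the crux is accurate.

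Two small remarks. First, the Hecke stability of the span of the $\Gamma_D$ is indeed classical (the relation $T_\ell\Gamma_D=\Gamma_{D\ell^2}+\cdots$ via the action of $T_\ell$ on CM points, as in Gross's paper), so your use of it is legitimate; in practice Parent argues directly with a basis of conjugate eigenforms rather than invoking stability abstractly, but the effect is the same. Second, your handling of the Eisenstein line is consistent with the paper's conventions: since $\T_{\Gamma_0(q)}$ is taken here as acting on $\mathcal{P}_S$, the Eisenstein idempotent lies in $I_e\otimes\Q$ and does not annihilate $A_E$, so $\mathcal{P}_S[I_e]\otimes\Q\subset\mathcal{P}_S^0\otimes\Q$ and the orthogonal projection onto $\mathcal{P}_S^0$ is the correct reading of the statement (compare the parallel formulation for $\mathbb{E}$ in Proposition~\ref{projgross} and its use in the proof of the main theorem).
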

\begin{proof}
Consulter \cite[prop 4,2]{Parent}. 
\end{proof}
On rappelle finalement que le vecteur d'Eisenstein appartient au $\Q$-espace engendr\'e par les vecteurs de Gross. Cela d\'ecoule de la proposition \ref{Eisenstein} ci-dessous. 
\begin{prop}
\label{Eisenstein}
Soient $D$ le discriminant de l'ordre maximal $O_D$ d'un corps quadratique imaginaire, tel que $q$ soit inerte ou ramifi\'e dans $O_D$ et $l\neq q$ un nombre premier. On a : 
$$\lim_{n \rightarrow \infty} \frac{\coupl{A_E,A_E}}{\coupl{A_E,\Gamma_{l^{2n}D}}} \Gamma_{l^{2n}D}=A_E.$$
Il en d\'ecoule que $A_E$ appartient au $\Q$-espace vectoriel engendr\'e par les $\Gamma_{l^{2n}D}$, pour $n\ge 1$. 
\end{prop}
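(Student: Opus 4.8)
Le plan que je propose repose sur la d\'ecomposition de $\mathcal{P}_S\otimes\Q$ sous l'alg\`ebre de Hecke $\T_{\Gamma_0(q)}$ en la droite d'Eisenstein $\Q A_E$ et le sous-espace cuspidal $\mathcal{P}_S^0\otimes\Q$, tous deux stables et orthogonaux pour l'accouplement de monodromie (qui est d\'efini positif). J'\'ecrirais $\Gamma_{l^{2n}D}=c_n A_E+v_n$ avec $v_n\in\mathcal{P}_S^0\otimes\Q$. Comme $A_E$ engendre $(\mathcal{P}_S^0)^{\bot}$, on a $\coupl{A_E,\Gamma_{l^{2n}D}}=c_n\coupl{A_E,A_E}$, de sorte que le coefficient de l'\'enonc\'e vaut $1/c_n$ et que
\[
\frac{\coupl{A_E,A_E}}{\coupl{A_E,\Gamma_{l^{2n}D}}}\,\Gamma_{l^{2n}D}=A_E+\frac{1}{c_n}\,v_n.
\]
Tout se ram\`ene alors \`a prouver que $\Vert v_n\Vert/\lvert c_n\rvert\to 0$, ce qui r\'esultera d'un contraste de croissance : $c_n$ cro\^it comme $l^n$ tandis que $v_n$ cro\^it au plus comme $l^{n/2}$.

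Pour la composante d'Eisenstein, j'utiliserais que $A_E$ est vecteur propre de $T_l$ de valeur propre $l+1$ : cela d\'ecoule de l'auto-adjonction des op\'erateurs de Hecke pour l'accouplement de monodromie, jointe \`a l'\'egalit\'e $\coupl{v,A_E}=\deg(v)$ et \`a $\deg(T_l v)=(l+1)\deg(v)$. Le coefficient se calcule alors directement par $c_n=\deg(\Gamma_{l^{2n}D})/\coupl{A_E,A_E}$. La formule des traces d'Eichler (d\'ej\`a employ\'ee dans la preuve de la proposition \ref{vecteurdegrossmodulaire}) donne $\deg(\Gamma_{l^{2n}D})=\tfrac12\big(1-\legendre{D}{q}\big)h(l^{2n}D)/u(l^{2n}D)$, o\`u $1-\legendre{D}{q}\in\{1,2\}$ est non nul puisque $q$ est inerte ou ramifi\'e, et o\`u $u(l^{2n}D)=1$ pour $n\ge 1$. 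La formule du nombre de classes pour l'ordre de conducteur $l^n$ de $O_D$,
\[
h(l^{2n}D)=\frac{h(D)\,l^n}{u(D)}\Big(1-\legendre{D}{l}\tfrac1l\Big),
\]
et la positivit\'e de $1-\legendre{D}{l}/l$ montrent que $c_n$ cro\^it comme une constante strictement positive fois $l^n$; en particulier $c_n\to\infty$.

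Le c\oe ur de la preuve, et son principal obstacle, est le contr\^ole de $v_n$. J'\'etablirais d'abord une relation de r\'ecurrence de Hecke : par la proposition \ref{vecteurdegrossmodulaire}, $\Gamma_{l^{2n}D}$ est, pour $n\ge 1$, la somme des r\'eductions des points \`a multiplication complexe de conducteur $l^n$ dans $O_D$, et l'action de $T_l$ sur de tels diviseurs (de conducteur divisible par $l$) donne
\[
T_l\,\Gamma_{l^{2n}D}=\Gamma_{l^{2(n+1)}D}+l\,\Gamma_{l^{2(n-1)}D}\qquad(n\ge 2),
\]
les termes initiaux en $n=0,1$ (qui font intervenir $u(D)$) n'affectant pas le comportement asymptotique. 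En projetant sur les espaces propres de $\T_{\Gamma_0(q)}$ dans $\mathcal{P}_S^0\otimes\C$, qui correspondent par Eichler et Jacquet-Langlands aux formes primitives $f\in S_2(\Gamma_0(q))$, la composante $\gamma_n^{(f)}$ de $\Gamma_{l^{2n}D}$ v\'erifie $\gamma_{n+1}^{(f)}=a_l(f)\,\gamma_n^{(f)}-l\,\gamma_{n-1}^{(f)}$. La borne de Ramanujan-Petersson $\lvert a_l(f)\rvert\le 2\sqrt l$ (classique en poids $2$ via Eichler-Shimura) assure que les racines de $X^2-a_l(f)X+l$ sont de module $\sqrt l$, d'o\`u $\Vert\gamma_n^{(f)}\Vert=O(n\,l^{n/2})$; comme les $f$ sont en nombre fini et les espaces propres orthogonaux, on obtient $\Vert v_n\Vert=O(n\,l^{n/2})$. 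C'est ici que tout se joue : il est essentiel que la croissance cuspidale en $l^{n/2}$ soit strictement domin\'ee par la croissance d'Eisenstein en $l^n$, ce qui repose crucialement sur Ramanujan-Petersson.

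Il resterait \`a conclure. On a $\Vert v_n\Vert/\lvert c_n\rvert=O(n\,l^{-n/2})\to 0$, donc
\[
\lim_{n\to\infty}\frac{\coupl{A_E,A_E}}{\coupl{A_E,\Gamma_{l^{2n}D}}}\,\Gamma_{l^{2n}D}=\lim_{n\to\infty}\Big(A_E+\tfrac{1}{c_n}v_n\Big)=A_E,
\]
ce qui est la limite annonc\'ee. Pour la derni\`ere assertion, je noterais que le $\Q$-espace $V$ engendr\'e par les $\Gamma_{l^{2n}D}$ est de dimension finie dans $\mathcal{P}_S\otimes\Q$, donc v\'erifie $V=(V\otimes_\Q\R)\cap(\mathcal{P}_S\otimes\Q)$; comme $A_E\in\tfrac16\mathcal{P}_S$ est limite d'\'el\'ements de $V$, il appartient \`a $V\otimes_\Q\R$, et, \'etant rationnel, \`a $V$ lui-m\^eme.
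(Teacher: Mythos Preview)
Your argument is correct and is precisely the standard equidistribution argument underlying the result the paper cites (the paper gives no proof of its own here, referring instead to \cite[thm 4.3]{Parent}): decompose along the Eisenstein line and the cuspidal part, use the class-number formula to get linear-in-$l^n$ growth on the Eisenstein side, and use the Hecke recurrence together with the Ramanujan--Petersson bound $|a_l(f)|\le 2\sqrt{l}$ to get $O(n\,l^{n/2})$ growth on the cuspidal side. Your closure argument for the final assertion (a finite-dimensional $\Q$-subspace satisfies $V=(V\otimes_{\Q}\R)\cap(\mathcal{P}_S\otimes\Q)$) is also sound.
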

\begin{proof}
Consulter \cite[thm 4.3]{Parent}. 
\end{proof}
On peut remarquer de la m\^eme fa\c con que le vecteur d'Eisenstein-Shimura sur le graphe dual de la fibre en $p$ de la courbe de Shimura $X^{pq}$ (cf. D\'efinition \ref{EisensteinShimura}) s'exprime lui aussi comme combinaison lin\'eaire de vecteurs de Gross.

\subsection{Vecteurs de Gross sur le graphe de Shimura}

Soit $X^{pq}$ la courbe de Shimura de discriminant $pq$. Ribet donne la description de $\mathscr{G}(X^{pq}_{\F_p})$, le graphe dual de la fibre en $p$ de cette courbe de Shimura, dans \cite[Propositions 4.4 et 4.7]{Ribet}. L'ensemble de ses sommets est constitu\'e de deux copies $S_1\sqcup S_2$ de l'ensemble $S$ des classes d'isomorphismes de courbes elliptiques supersinguli\`eres $\overline{\F}_{q}$. L'ensemble de ses ar\^etes est l'ensemble $\{e_1,\dots,e_n\}$ des classes d'isomorphismes de $p-$isog\'enies entre ces courbes elliptiques. Chacune de ces isog\'enies $e_i \colon E_{j_1}\rightarrow E_{j_2}$ s'interpr\`ete comme une ar\^ete reliant le sommet de $S_1$ correspondant \`a $E_{j_1}$ au sommet de $S_2$ correspondant \`a $E_{j_2}$. De mani\`ere \'equivalente l'ensemble des ar\^etes du graphe s'identifie avec l'ensemble des couples $(E,C_p)$, o\`u $E$ est une courbe elliptique supersinguli\`ere \`a isomorphisme pr\`es sur $\overline{\F}_{q}$, et $C_p$ est un sous-groupe d'ordre $p$ de $E$ d\'efini \`a action de $\End(E)^*$ pr\`es. On munit ce graphe d'une orientation en consid\'erant que toutes les ar\^etes sont orient\'ees d'un sommet de $S_1$ vers un sommet de $S_2$. \`A chaque ar\^ete $e=(E,C_p)$ on associe l'ordre d'Eichler $\End(e)=\{\alpha\in\End(E) \ : \ \alpha(C_p)\subseteq C_p\}$ de niveau $p$ dans $B_{q\infty}$. On d\'efinit la longueur d'une ar\^ete $e=(E,C_p)$ comme le cardinal de $\End(e)^*/\{\pm 1\}$. Comme pour le graphe modulaire, les ar\^etes ont toutes une longueur 1, sauf peut-\^etre les quatre ar\^etes d\'ecrites dans le corollaire \ref{epaisseur2} ci-apr\`es.

Les involutions d'Atkin Lehner $w_p$ et $w_q$ agissent sur le graphe de la mani\`ere suivante: l'involution $w_p$ \'echange chaque sommet de $S_1$ (respectivement $S_2$) avec le sommet de $S_2$ (respectivement $S_1$) correspondant \`a la m\^eme courbe elliptique \`a isomorphisme pr\`es, et \'echange l'ar\^ete correspondant \`a une isog\'enie $(E,C_p)$ avec l'oppos\'e de l'ar\^ete correspondant \`a son isog\'enie duale $(E/C_p,E[p]/C_p)$. L'involution $w_q$ agit comme le Frobenius sur le graphe, c'est-\`a-dire qu'elle \'echange chaque sommet de $S_1$ (respectivement $S_2$) avec le sommet de $S_1$ (respectivement $S_2$) correspondant \`a l'image par le Frobenius en $q$ de la courbe elliptique correspondante, et \'echange l'ar\^ete correspondant \`a l'isog\'enie $(E,C_p)$ avec l'ar\^ete correspondant \`a l'image de cette isog\'enie par le Frobenius. 

Comme dans le paragraphe pr\'ec\'edent, on consid\`ere $\mathcal{L}$ le $\Z$-module des diviseurs sur les ar\^etes du graphe $\mathscr{G}(X^{pq}_{\F_p})$. On appelle les \'el\'ements de $\mathcal{L}$ les chemins sur le graphe. Notons $\mathcal{L}^0$ le $\Z$-module des diviseurs de degr\'e $0$. 
La $\Z$-alg\`ebre $\T_{\Gamma_0(pq)}$ engendr\'ee par les op\'erateurs de Hecke $T_l$, pour $l\neq p,q$ premier, agit sur ces groupes par l'action : 
$$T_l((E,C_p))=\sum_{C_l\subset E}(E/C_l,(C_p+C_l)/C_l),$$
 o\`u $C_l$ parcourt les sous-groupes d'ordre $l$ de $E$. 

On d\'efinit l'accouplement de monodromie sur $\mathcal{L}$ par : 
$$\coupl{e_i,e_j}=\delta_{i,j}\card(\End(e_i)^*/\{\pm 1\}).$$
On remarque que les $(e_i)_{i=1\dots n}$ forment une base orthogonale de $\mathcal{L}$ pour cet accouplement, et que presque tous les $e_i$ sont de norme $1$. 

\begin{dfn}
\label{EisensteinShimura}
Le vecteur d'Eisenstein (ou vecteur d'Eisenstein-Shimura) est d\'efini par : 
$$a_E=\sum_{i=1}^{n} \frac{1}{\card(\End(e_i)^*/\{\pm 1\})}e_i.$$
Il forme une base de $(\mathcal{L}^0)^{\bot}$, l'espace orthogonal \`a $\mathcal{L}^0$ pour l'accouplement de monodromie. 
\end{dfn}
On remarque que le module $\mathcal{L}$ s'identifie naturellement avec le groupe des diviseurs des points supersinguliers de $X_0(pq)(\F_{q^2})$. Le vecteur d'Eisenstein sur $\mathscr{G}(X^{pq}_{\F_p})$ correspond donc \`a un vecteur d'Eisenstein pour la courbe modulaire de niveau non premier $X_0(pq)_{\F_q}$. De la m\^eme fa\c con, les vecteurs de Gross sur $\mathscr{G}(X^{pq}_{\F_p})$ qui seront introduits dans la d\'efinition \ref{defvecteurdegross} ci-dessous, correspondent \`a des vecteurs de Gross pour la courbe modulaire $X_0(pq)_{\F_q}$.

\begin{dfn}
On note par $s$ l'application qui a une ar\^ete $e=(E_{j_k},C_p)$ de $\mathscr{G}(X^{pq}_{\F_p})$ associe la courbe $E_{j_k}\in S$.
On a :  
$$s_*\colon \mathcal{L} \rightarrow \mathcal{P}_S$$ 
$$\sum_{i=1}^n \lambda_i e_i\mapsto \sum_{i=1}^n \lambda_i s(e_i). $$
De mani\`ere similaire on d\'efinit l'application $t$ qui a l'ar\^ete $e=(E_{j_k},C_p)$ associe la courbe $E_{j_k}/C_p\in S$.
On a : 
$$t_*\colon \mathcal{L} \rightarrow \mathcal{P}_S$$ 
$$\sum_{i=1}^n \lambda_i e_i\mapsto \sum_{i=1}^n \lambda_i t(e_i). $$
\end{dfn}
Soit $Y$ l'intersection des noyaux de $s_*$ et $t_*$. C'est un sous $\Z$-module de $\mathcal{L}^0$. D'apr\`es \cite[corollary 4.5]{Ribet}, $Y$ est isomorphe \`a $H_1(\mathscr{G}(X^{pq}_{\F_p}),\Z)$. On appelle les \'el\'ements de $Y$ les cycles ou chemins ferm\'es de $\mathscr{G}(X^{pq}_{\F_p})$.

\begin{dfn}
\label{defvecteurdegross}
Soit $O_{D}$ un ordre quadratique imaginaire de discriminant $D$. On d\'efinit le \emph{vecteur de Gross sur le graphe dual de la fibre en $p$ de la courbe de Shimura}, (ou vecteur de Gross-Shimura), $\gamma_D\in\frac{1}{12}\mathcal{L}$, par : 
$$ \gamma_D=\sum_{i=1}^n \frac{h_i(D)}{\card(\End(e_i)^*/\{\pm 1\})} e_i,$$
o\`u $h_i(D)$ est le nombre de plongements optimaux de $O_D$ dans l'ordre d'Eichler $R_i=\End(e_i)$. 
\end{dfn}

Lorsque $q$ est scind\'e (ou $p$ est inerte) dans l'ordre $O_D$, d'apr\`es la formule des traces d'Eichler, il n'existe aucun plongement de $O_D$ dans les ordres d'Eichler de niveau $p$ de $B_{q\infty}$, et le vecteur $\gamma_D$ est nul (cf. \cite[thm 5.11, p 92]{Vigneras}). Lorsque $q$ est inerte (ou ramifi\'e et ne divisant pas le conducteur de $O_D$) et $p$ scind\'e (ou ramifi\'e et ne divisant pas le conducteur de $O_D$) dans $O_D$, le vecteur $\gamma_D$ est non nul. De mani\`ere similaire \`a la proposition \ref{vecteurdegrossmodulaire}, la proposition suivante donne une description des ar\^etes du vecteur $\gamma_D$ en terme de r\'eduction modulo $q$ des isog\'enies entre courbes elliptiques sur $\overline{\Q}$ \`a multiplication complexe par $O_D$. Nous nous restreignons au cas o\`u $q$ est inerte et $p$ scind\'e dans $O_D$, car nous ne manipulerons dans la suite que des vecteurs satisfaisant ces hypoth\`eses.

\begin{prop}
\label{vecteurdegrossshimura0}
Soit $O_D$ un ordre quadratique imaginaire de discriminant $D$ et de nombre de classe $h(D)$, tel que $q$ soit inerte et $p$ scind\'e dans $O_D$. Fixons $\mathcal{Q}$ une place de $\overline{\Q}$ au dessus de $q$. Le vecteur $\gamma_D$ est constitu\'e de $2h(D)$ ar\^etes, chacune de ces ar\^etes $e$ pond\'er\'ee par $2/(\card(\End(e)^*/\{\pm 1\}))$. Ces ar\^etes correspondent aux r\'eductions modulo $\mathcal{Q}$ des $2h(D)$ $p$-isog\'enies entre les $h(D)$ courbes elliptiques sur $\overline{\Q}$ ayant multiplication complexe par $O_D$.
\end{prop}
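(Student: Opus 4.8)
Le plan est de reproduire la strat\'egie de la preuve de la proposition \ref{vecteurdegrossmodulaire}, en l'adaptant aux ordres d'Eichler de niveau $p$ et aux isog\'enies plut\^ot qu'aux ordres maximaux et aux courbes. Les deux ingr\'edients sont la formule des traces d'Eichler, pour d\'enombrer les plongements optimaux, et le th\'eor\`eme fondamental de la multiplication complexe coupl\'e \`a la correspondance de Gross entre plongements optimaux et couples (courbe, plongement), pour interpr\'eter g\'eom\'etriquement ces plongements apr\`es r\'eduction modulo $\mathcal{Q}$. On commence par compter : par la formule des traces d'Eichler (cf. \cite[thm 5.11, p 92]{Vigneras}), le nombre total de plongements optimaux de $O_D$ dans les ordres d'Eichler de niveau $p$ de $B_{q\infty}$ vaut
$$\sum_{i=1}^n h_i(D)=\bigg(1-\legendre{D}{q}\bigg)\bigg(1+\legendre{D}{p}\bigg)h(D)=4h(D),$$
le facteur local $1-\legendre{D}{q}=2$ provenant de l'inertie de $q$ (comme dans la proposition \ref{vecteurdegrossmodulaire}) et le facteur local en $p$, $1+\legendre{D}{p}=2$, de la d\'ecomposition de $p$ dans $O_D$.

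Du c\^ot\'e multiplication complexe, le th\'eor\`eme fondamental fournit $h(D)$ courbes elliptiques $E_1,\dots,E_{h(D)}$ sur $\overline{\Q}$ \`a isomorphisme pr\`es, \`a multiplication complexe par $O_D$, chacune munie de deux isomorphismes conjugu\'es $f_1,f_2\colon O_D\rightarrow \End(E)$ ; comme $q$ est inerte, leurs r\'eductions modulo $\mathcal{Q}$ sont supersinguli\`eres. Comme $p$ est scind\'e, on \'ecrit $pO_D=\mathfrak{p}\overline{\mathfrak{p}}$ et chaque courbe poss\`ede exactement deux sous-groupes cycliques d'ordre $p$ stables sous $O_D$, \`a savoir $E[\mathfrak{p}]$ et $E[\overline{\mathfrak{p}}]$ ; on obtient ainsi les $2h(D)$ isog\'enies \og horizontales\fg\ $(E,C_p)$ de l'\'enonc\'e, dont les r\'eductions sont des ar\^etes de $\mathscr{G}(X^{pq}_{\F_p})$. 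Comme dans la proposition \ref{vecteurdegrossmodulaire}, on choisit un anneau de valuation discr\`ete complet $W$ de corps r\'esiduel $\overline{\F}_q$ contenant $O_D\otimes\Z_q$ et d'uniformisante $q'$, sur lequel les $E_l$ sont d\'efinies, et l'on consid\`ere les triplets $((E,C_p),g)$ form\'es d'une telle courbe munie d'un sous-groupe $C_p$ stable sous $O_D$ et d'un isomorphisme $g\colon O_D\rightarrow \End(E)$, \`a \'equivalence pr\`es par r\'eduction modulo $q'$. Le plongement optimal associ\'e est la compos\'ee de $g$ avec la r\'eduction $\End(E,C_p)\rightarrow\End((E,C_p)\mod q')$, la stabilit\'e de $C_p$ sous $O_D$ \'etant exactement ce qui rend ce plongement optimal dans l'ordre d'Eichler $\End(e)$. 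Il y a $h(D)\times 2\times 2=4h(D)$ tels triplets (choix de la courbe, de $g$, de $C_p$), ce qui co\"incide avec le nombre de plongements optimaux : par \'egalit\'e des cardinaux la correspondance est bijective.

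Pour conclure, on regroupe les plongements optimaux selon l'ar\^ete $e_i=(E,C_p)\mod\mathcal{Q}$ sur laquelle ils se r\'eduisent. Cette ar\^ete ne d\'epend pas de $g$, donc chaque isog\'enie horizontale $(E,C_p)$ se r\'eduisant sur $e_i$ y contribue par ses deux choix de $g$ : si $m_i$ d\'esigne le nombre de ces isog\'enies, on obtient $h_i(D)=2m_i$. En reportant dans la d\'efinition \ref{defvecteurdegross}, il vient
$$\gamma_D=\sum_{i=1}^n \frac{h_i(D)}{\card(\End(e_i)^*/\{\pm 1\})}e_i=\sum_{(E,C_p)}\frac{2}{\card(\End(e)^*/\{\pm 1\})}\,(E,C_p)\mod\mathcal{Q},$$
la seconde somme portant sur les $2h(D)$ isog\'enies horizontales, ce qui est l'\'enonc\'e voulu.

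L'obstacle principal que j'anticipe est la comptabilit\'e li\'ee \`a la structure de niveau $p$ : il faut v\'erifier soigneusement qu'un plongement optimal de $O_D$ dans l'ordre d'Eichler $\End(e)$ force $C_p$ \`a \^etre l'un des deux sous-groupes \og horizontaux\fg\ $E[\mathfrak{p}]$, $E[\overline{\mathfrak{p}}]$ (et non un sous-groupe \og vertical\fg), de sorte que le facteur local $1+\legendre{D}{p}=2$ en $p$ corresponde exactement aux deux choix horizontaux. Il faudra \'egalement traiter avec soin les rares ar\^etes de longueur $>1$ (li\'ees aux discriminants exceptionnels $D=-3,-4$), o\`u les unit\'es suppl\'ementaires modifient \`a la fois les poids et la relation d'\'equivalence, tout comme le facteur $u(D)$ intervient dans l'\'enonc\'e modulaire.
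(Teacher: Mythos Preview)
Your approach is essentially the same as the paper's: Eichler's trace formula gives $4h(D)$ optimal embeddings, the theory of complex multiplication gives $4h(D)$ triples $((E,C_p),g)$, and the correspondence between them is bijective by equality of cardinalities. The one nontrivial point you flag --- that an optimal embedding into $\End(E_{j_i},C_p)$ forces $C_p$ to be the reduction of a \emph{horizontal} subgroup --- is exactly what the paper addresses, and it does so by first invoking Proposition~\ref{vecteurdegrossmodulaire} to lift the underlying embedding into $\End(E_{j_i})$ to a pair $(E,g)$, then lifting $C_p$ to $C\subset E[p]$ via the isomorphism $E[p]\simeq E_{j_i}[p]$, and finally observing that $g(O_D)\subset\End(E,C)$ forces $C$ to be the kernel of one of the two $p$-isogenies to CM curves; your concern about the exceptional weights and $u(D)$ is not actually an issue here, since the weight $2/\card(\End(e)^*/\{\pm 1\})$ in the statement already absorbs those automorphisms and $u(D)=1$ under the running hypotheses.
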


\begin{proof}
Par la formule des traces d'Eichler, le nombre de plongements optimaux de $O_D$ dans les ordres d'Eichler de niveau $p$ de $B_{q\infty}$ est 
$$\sum_{i=1}^n h_i(D)=\bigg(1-\legendre{D}{q}\bigg)\bigg(1+\legendre{D}{p}\bigg)h(D)=4h(D).$$ 
Les ordres d'Eichler $R_1,\dots, R_n$ de niveau $p$ correspondent aux $\End(E_{j_i}, C_p)$, o\`u $i=1,\dots, g+1$ et $C_p$ parcourt les sous-groupes d'ordre $p$ de $E_{j_i}$ \`a isomorphisme pr\`es. 

Soit $\Phi \colon O_D\rightarrow\End(E_{j_i},C_p)$ un plongement optimal. Le plongement induit $\Phi'\colon O_D\rightarrow\End(E_{j_i})$ est \'egalement optimal. D'apr\`es la proposition \ref{vecteurdegrossmodulaire}, ce plongement $\Phi'$ provient d'un couple $(E,g)$, o\`u $E/\overline{\Q}$ est une courbe elliptique \`a multiplication complexe par $O_D$ munie d'un isomorphisme $g\colon O_D\rightarrow \End(E)$, de telle sorte que $(E \mod \mathcal{Q})=E_{j_i}$. Comme la r\'eduction modulo $\mathcal{Q}$ r\'ealise un isomorphisme $E[p]\rightarrow E_{j_i}[p]$, il existe un sous-groupe $C$ d'ordre $p$ de $E$ tel que $(C \mod \mathcal{Q})=C_p$. Pour des raisons de compatibilit\'e, on a $g(O_D)$ inclus dans $\End(E,C)$, c'est-\`a-dire $\End(E)=\End(E,C)\simeq O_D$. L'isog\'enie $E\rightarrow E/C$ de noyau $C$ est une $p$-isog\'enie entre $E$ et une courbe elliptique $E/C$ \`a multiplication complexe par $O_D$. Par la th\'eorie de la multiplication complexe, une telle isog\'enie correspond \`a un id\'eal $P$ de $O_D$ au dessus de $p$. Comme $p$ est scind\'e dans $O_D$, il existe deux isog\'enies distinctes de $E$ dans des courbes elliptiques \`a multiplication complexe par $O_D$, et $C$ est le noyau d'une de ces deux isog\'enies. 
 Finalement un plongement optimal  $\Phi$ de $O_D$ dans un ordre d'Eichler $R=\End(E_{j_i},C_p)$ correspond \`a un des $4h(D)$ couples $((E,C),g)$ constitu\'es d'une $p$-isog\'enie \`a isomorphisme pr\`es $(E,C)$ entre courbes elliptiques sur $\overline{\Q}$ ayant multiplication complexe par $O_D$, et d'un isomorphisme $g\colon O_D \rightarrow \End(E,C)$. Ici $\Phi$ est la compos\'ee de $g$ et de la r\'eduction modulo $\mathcal{Q}$ de $\End(E,C) \rightarrow \End(E_{j_i},C_p)$. Cette correspondance est bijective par \'egalit\'e des cardinaux.

\end{proof}

La proposition suivante d\'ecrit l'action des involutions d'Atkin-Lehner sur les vecteurs de Gross. 
\begin{prop}
\label{actionatkinlehner}
Avec les hypoth\`eses de la proposition \ref{vecteurdegrossshimura0}, on a $w_p(\gamma_D)=-\gamma_D$, et $w_q(\gamma_D)=\gamma_D$. 
\end{prop}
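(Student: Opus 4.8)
Le plan est de traiter s\'epar\'ement chaque involution, en exploitant le fait qu'elles permutent les ar\^etes de $\mathscr{G}(X^{pq}_{\F_p})$ (avec un signe global pour $w_p$) et que le poids $h_i(D)/\card(\End(e_i)^*/\{\pm 1\})$ de l'ar\^ete $e_i$ dans $\gamma_D$ ne d\'epend que de la classe d'isomorphisme de l'ordre d'Eichler $\End(e_i)$. Il suffira donc de v\'erifier que deux ar\^etes \'echang\'ees par $w_q$ (resp. par $w_p$) ont des ordres d'Eichler isomorphes : le nombre de plongements optimaux de $O_D$ et le cardinal du groupe des unit\'es \'etant des invariants de la classe d'isomorphisme de l'ordre, les poids co\"incideront alors.

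Pour $w_q$, l'involution agit comme le Frobenius en $q$, qui envoie $e=(E,C_p)$ sur $w_q(e)=(E^{(q)},\Frob(C_p))$. Le twist de Frobenius $\psi\mapsto\psi^{(q)}$ r\'ealise un isomorphisme d'anneaux $\End(E)\xrightarrow{\sim}\End(E^{(q)})$, qui se restreint en un isomorphisme des ordres d'Eichler $\End(e)\xrightarrow{\sim}\End(w_q(e))$. Les poids des ar\^etes $e$ et $w_q(e)$ dans $\gamma_D$ sont donc \'egaux, et comme $w_q$ permute les ar\^etes sans changement de signe, on obtient $w_q(\gamma_D)=\gamma_D$ en sommant sur toutes les ar\^etes.

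Pour $w_p$, l'involution envoie $e=(E,C_p)$ sur $-\hat e$, o\`u $\hat e=(E/C_p,E[p]/C_p)$ est l'ar\^ete de l'isog\'enie duale. Le point d\'elicat, qui constitue le principal obstacle, est d'\'etablir que $\End(e)$ et $\End(\hat e)$ sont isomorphes. On peut proc\'eder ainsi : tout $\alpha\in\End(E,C_p)$ pr\'eservant $C_p$ descend en $\bar\alpha\in\End(E/C_p)$ via l'isog\'enie $\phi\colon E\to E/C_p$ de noyau $C_p$ (c'est-\`a-dire $\phi\alpha=\bar\alpha\phi$) ; comme $\alpha$ pr\'eserve aussi $E[p]$, on a $\bar\alpha(E[p]/C_p)=\phi(\alpha(E[p]))\subseteq\phi(E[p])=E[p]/C_p$, de sorte que $\bar\alpha\in\End(\hat e)$. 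L'application $\alpha\mapsto\bar\alpha$ est un morphisme d'anneaux injectif entre deux ordres d'Eichler de m\^eme niveau $p$ dans $B_{q\infty}$ ; son image est un sous-ordre de $\End(\hat e)$ de m\^eme discriminant, donc d'indice $1$, et $\alpha\mapsto\bar\alpha$ est un isomorphisme. Les poids de $e$ et $\hat e$ dans $\gamma_D$ co\"incident alors, et puisque la dualit\'e permute les ar\^etes du graphe en pr\'eservant ces poids, on en d\'eduit
\[
w_p(\gamma_D)=-\sum_{i=1}^n \frac{h_i(D)}{\card(\End(e_i)^*/\{\pm 1\})}\,\hat{e_i}=-\gamma_D.
\]

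On pourrait aussi invoquer directement la proposition \ref{vecteurdegrossshimura0} : $\gamma_D$ \'etant la somme pond\'er\'ee des r\'eductions modulo $\mathcal{Q}$ des $2h(D)$ $p$-isog\'enies entre courbes elliptiques \`a multiplication complexe par $O_D$, il suffit d'observer que cet ensemble de $p$-isog\'enies est stable par passage \`a la duale (associ\'ee \`a $w_p$ au signe pr\`es) et par le Frobenius en $q$ (associ\'e \`a $w_q$), la r\'eduction commutant \`a ces deux op\'erations. L'identification des ordres d'Eichler d'une isog\'enie et de sa duale reste la partie centrale de la v\'erification.
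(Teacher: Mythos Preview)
Your proof is correct and essentially matches the paper's: for $w_p$ both arguments hinge on $\End(e)\simeq\End(\hat e)$ (which the paper simply asserts and you prove in detail via descent along $\phi$), and for $w_q$ the paper actually uses your alternative route via Proposition~\ref{vecteurdegrossshimura0}, lifting to $\overline{\Q}$ and letting a $\sigma\in\Gal(\overline{\Q}/\Q)$ reducing to Frobenius permute the set of CM $p$-isog\'enies. Your primary $w_q$ argument, working directly in characteristic~$q$ with the Frobenius twist of endomorphisms, is a valid and slightly more self-contained variant of the same idea.
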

\begin{proof} Prouvons d'abord que $w_p(\gamma_D)=-\gamma_D$. Soit $\tau$ une permutation de l'ensemble $\{1,\dots, n\}$ telle que, pour chaque ar\^ete $e_i$, l'ar\^ete $e_{\tau(i)}$ est son isog\'enie duale. L'involution $w_p$ agit sur les ar\^etes du graphe en envoyant $e_i$ sur $-e_{\tau(i)}$. Ainsi on a 
$$ w_p(\gamma_D)=\sum_{i=1}^{n}\frac{h_i(D)}{\card(\End(e_i)^*/\{\pm 1\})} (-e_{\tau(i)}),$$
or $\End(e_i)\simeq \End(e_{\tau(i)})$ donc $w_p(\gamma_D) =-\gamma_D.$

Rappelons que $w_q$ agit comme le Frobenius sur les sommets et les ar\^etes du graphe. Soit $\sigma \in \Gal(\overline{\Q}/\Q)$ tel que $\sigma$ se r\'eduit en le Frobenius de $\Gal(\overline{\F}_q/\F_q)$. Consid\'erons $\mathcal{I}_{\overline{\Q}}$ l'ensemble des $p$-isog\'enies \`a isomorphismes pr\`es entre courbes elliptiques sur $\overline{\Q}$ \`a multiplication complexe par $O_D$. D'apr\`es la proposition \ref{vecteurdegrossshimura0}, 
$$\gamma_D=\sum_{g\in \mathcal{I}_{\overline{\Q}}} \frac{1}{\card(\End(g \mod q)^*/\{\pm 1\})} (g\mod q),$$
donc 
$$w_q(\gamma_D)=\sum_{g\in \mathcal{I}_{\overline{\Q}}} \frac{1}{\card(\End(g \mod q)^*/\{\pm 1\})} (\sigma(g)\mod q).$$
Or  $\sigma$ r\'ealise une bijection de $\mathcal{I}_{\overline{\Q}}$ dans $\mathcal{I}_{\overline{\Q}}$ telle que pour tout $g\in \mathcal{I}_{\overline{\Q}}$, on a $\End(\sigma(g) \mod q)\simeq\End(g \mod q)$ donc $w_q(\gamma_D)=\gamma_D$.
\end{proof}

La proposition \ref{vecteurdegrossshimura0} nous permet de d\'ecrire l'image de $\gamma_D$ par les applications $s_*$ et $t_*$, en comparant le vecteur $\gamma_D$ sur $\mathscr{G}(X^{pq}_{\F_p})$ au vecteur $\Gamma_D$ sur $\mathscr{G}((X_0(q))_{\F_q})$. 
\begin{cor}
\label{vecteurdegrossshimura}
Avec les hypoth\`eses de la proposition \ref{vecteurdegrossshimura0}, les ar\^etes du vecteur de Gross $\gamma_D$ sur le graphe $\mathscr{G}(X^{pq}_{\F_p})$ sont distribu\'ees de la fa\c con suivante : de chaque sommet $E_{j_k}$ dans $S_1$ partent $H_k(D)$ ar\^etes de $\gamma_D$. Chacune de ces ar\^etes est pr\'ec\'ed\'ee du coefficient $2/l$, o\`u $l$ est la longueur de l'ar\^ete. 
En particulier, si le vecteur $\gamma_D$ ne contient pas d'ar\^etes de longueur $2$ ou $3$, on a : 
$$s_*(\gamma_D)=t_*(\gamma_D)=4\Gamma_D.$$ 
\end{cor}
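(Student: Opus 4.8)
The plan is to read off both sides from the reduction descriptions already established, namely Proposition~\ref{vecteurdegrossshimura0} for $\gamma_D$ and Proposition~\ref{vecteurdegrossmodulaire} for $\Gamma_D$, and then to match them through the combinatorics of the maps $s$ and $t$. First I would rewrite $\gamma_D$ as a sum over $p$-isogenies. Writing $\mathcal{I}_{\overline{\Q}}$ for the set of the $2h(D)$ $p$-isogenies between the $h(D)$ elliptic curves over $\overline{\Q}$ with complex multiplication by $O_D$, Proposition~\ref{vecteurdegrossshimura0} gives
$$\gamma_D = \sum_{g\in\mathcal{I}_{\overline{\Q}}} \frac{2}{\card(\End(g\bmod q)^*/\{\pm1\})}\,(g\bmod q),$$
where each $g\colon E\to E/C$ reduces modulo $\mathcal{Q}$ to the edge $(E\bmod\mathcal{Q},\,C\bmod\mathcal{Q})$ of length $l=\card(\End(g\bmod q)^*/\{\pm1\})$. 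By definition $s$ sends this edge to its source $E\bmod\mathcal{Q}\in S_1$, so the edges of $\gamma_D$ issuing from a fixed vertex $E_{j_k}$ are exactly the reductions of those $g$ whose source reduces to $E_{j_k}$.

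Next I would count these edges. Since $p$ is split in $O_D$, each curve $E$ with complex multiplication by $O_D$ is the source of exactly two isogenies of $\mathcal{I}_{\overline{\Q}}$, one for each of the two primes of $O_D$ above $p$. On the other hand, Proposition~\ref{vecteurdegrossmodulaire} identifies $u(D)\Gamma_D$ with the formal sum $\sum_E (E\bmod\mathcal{Q})$ of the reductions of these $h(D)$ curves, whence the number of $E$ reducing to $E_{j_k}$ is the coefficient of $E_{j_k}$ in $u(D)\Gamma_D$, namely $H_k(D)/2$. Multiplying by the two isogenies per curve yields exactly $H_k(D)$ edges of $\gamma_D$ emanating from $E_{j_k}$, each carrying the coefficient $2/l$, which is the first assertion. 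The same argument applied to $t$, which sends an edge to its target $E/C\bmod\mathcal{Q}$ (each CM curve being the target of exactly two isogenies of $\mathcal{I}_{\overline{\Q}}$), gives the analogous statement for $t_*$.

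Finally I would specialize to the case where $\gamma_D$ has no edge of length $2$ or $3$. The one point requiring care is that this hypothesis forces $u(D)=1$: if $D=-4$ (resp.\ $D=-3$), then $q\equiv 3\bmod 4$ makes $q$ inert in $O_D$, the unique relevant CM curve $E$ reduces to a supersingular curve, and the kernel $C=E[P]$ of a $P$-isogeny is stable under all of $\End(E)=O_D$, so $O_D\hookrightarrow\End(E,C)$ reduces to an optimal embedding $O_D\hookrightarrow\End(E\bmod\mathcal{Q},\,C\bmod\mathcal{Q})$. As $O_{-4}$ (resp.\ $O_{-3}$) has units of order exceeding $2$, this edge has length $>1$, hence equal to $2$ (resp.\ $3$) by Corollaire~\ref{epaisseur2}, and it lies in the support of $\gamma_D$, contradicting the hypothesis. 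Thus $u(D)=1$ and every length $l$ equals $1$, so each edge contributes with coefficient $2$, and summing over the isogenies gives
$$s_*(\gamma_D) = \sum_{g\in\mathcal{I}_{\overline{\Q}}} 2\,(E_g\bmod\mathcal{Q}) = 2\cdot 2\sum_E (E\bmod\mathcal{Q}) = 4\,u(D)\Gamma_D = 4\Gamma_D,$$
where $E_g$ denotes the source of $g$; the identity $t_*(\gamma_D)=4\Gamma_D$ follows identically. The main obstacle is exactly this last bookkeeping: verifying that the excluded length $2$ and $3$ edges are the sole source of a discrepancy, so that ruling them out is precisely what makes the factor $u(D)$ disappear.
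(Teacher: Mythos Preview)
Your proof is correct and follows essentially the same route as the paper: both deduce the edge count from the correspondences set up in the proofs of Propositions~\ref{vecteurdegrossmodulaire} and~\ref{vecteurdegrossshimura0}. The paper phrases the key identity as $\sum_{e_i=(E_{j_k},C_p)} h_i(D)=2H_k(D)$ (each optimal embedding into the maximal order $\End(E_{j_k})$ lifts to two embeddings into Eichler orders), whereas you pass through the equivalent count of CM curves reducing to $E_{j_k}$; the content is the same.

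Two small differences are worth flagging. For $t_*$, the paper invokes the identity $t_*=s_*\circ(-w_p)$ together with Proposition~\ref{actionatkinlehner} ($w_p(\gamma_D)=-\gamma_D$), which is a bit slicker than repeating the source/target symmetry argument. Conversely, you make explicit a point the paper leaves tacit: the equality $\sum_k 2H_k(D)E_{j_k}=4\Gamma_D$ presupposes $u(D)=1$, and your verification that the hypothesis on exceptional edges forces $D\neq -3,-4$ fills this gap cleanly. (Your parenthetical ``$q\equiv 3\bmod 4$ makes $q$ inert in $O_D$'' is inaccurate for $D=-3$, but this is harmless since inertia of $q$ in $O_D$ is already among the standing hypotheses of Proposition~\ref{vecteurdegrossshimura0}.)
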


\begin{proof}
Dans la d\'emonstration de la proposition \ref{vecteurdegrossshimura0}, nous avons montr\'e que les $H_k(D)$ plongements optimaux de $O_D$ dans l'ordre maximal $\End(E_{j_k})$ d\'efinissent $2H_k(D)$ plongements optimaux de $O_D$ dans des ordres d'Eichler de la forme $\End(E_{j_k},C_p)$ qui correspondent \`a des ar\^etes de $\mathscr{G}(X^{pq}_{\F_p})$ partant du sommet $E_{j_k}\in S_1$. Cela se traduit par la formule suivante : 
$$\sum_{\topbot{i=1...n}{e_i=(E_{j_k},C_p)}} h_i(D)=2H_k(D).$$
Si le vecteur $\gamma_D$ ne contient pas d'ar\^etes de longueur $2$ ou $3$, on a : 
$$ \gamma_D=\sum_{i=1}^n h_i(D)e_i.$$
On en d\'eduit que 
$$s_*(\gamma_D)=\sum_{k=1}^{g+1}2H_k(D)E_{j_k}=4\Gamma_D.$$
D'autre part pour chaque ar\^ete $e=(E_{j_k},C_p)$, on a $$s_*(-w_p(e))=s_*((E_{j_k}/C_p,E_{j_k}[p]/C_p))=E_{j_k}/C_p=t_*(e).$$ Ce qui implique que $t_*(\gamma_D)=s_*(-w_p(\gamma_D))$. Or d'apr\`es la proposition \ref{actionatkinlehner}, on a $\gamma_D=-w_p(\gamma_D)$. En cons\'equence $t_*(\gamma_D)=s_*(\gamma_D)$, ce qui ach\`eve la preuve de la proposition. 

\end{proof}

En appliquant la proposition \ref{vecteurdegrossshimura0}, on d\'ecrit les ar\^etes de longueur $\neq 1$. 
\begin{cor}
\label{epaisseur2}
Toutes les ar\^etes du graphe $\mathscr{G}(X^{pq}_{\F_p})$ sont de longueur $1$ sauf dans les cas suivants : 
\begin{itemize}
\item Si $q$ est inerte et $p$ scind\'e dans $O_{-4}$, il existe exactement deux ar\^etes de longueur $2$. Ces deux ar\^etes relient le sommet $E_{1728}\in S_1$ au sommet $E_{1728}\in S_2$ et sont \'echang\'ees par $w_q$. 
\item Si $q$ est inerte et $p$ scind\'e dans $O_{-3}$, il existe exactement deux ar\^etes de longueur $3$. Ces deux ar\^etes relient le sommet $E_{0}\in S_1$ au sommet $E_{0}\in S_2$ et sont \'echang\'ees par $w_q$. 
\end{itemize}
En particulier si on se place dans le cas non ramifi\'e de Ogg (voir proposition \ref{casdeogg}), il existe deux ar\^etes exceptionnelles de longueur $2$ sur $\mathscr{G}(X^{pq}_{\F_p})$. Donc une seule ar\^ete exceptionnelle de longueur $2$ sur $\mathcal{G}((X^{pq}/w_q)_{\F_p})$. 
\end{cor}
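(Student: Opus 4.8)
The plan is to read off the exceptional edges by combining the purely local description of the unit groups of Eichler orders with the global counting of Gross vectors provided by Proposition \ref{vecteurdegrossshimura0}.

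First I would record the local fact, entirely parallel to the modular case of Proposition \ref{epaisseur}: for an Eichler order $R=\End(e)$ of level $p$ in $B_{q\infty}$, the finite group $R^*/\{\pm 1\}$ is trivial unless $O_{-4}=\Z[\zeta_4]$ or $O_{-3}=\Z[\zeta_6]$ embeds into $R$, in which case $\card(R^*/\{\pm 1\})$ equals $2$ or $3$ respectively; for $p,q\ge 5$ these two situations are mutually exclusive and are the only ones producing length $>1$ (this is the analogue of \cite[thm 10.1 p. 103]{Silv1} for the automorphisms of supersingular curves). Hence an edge $e=(E,C_p)$ has length $2$ (resp. $3$) if and only if $O_{-4}$ (resp. $O_{-3}$) embeds into $\End(e)$, and, $O_{-4}$ and $O_{-3}$ being maximal orders, every such embedding is automatically optimal. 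It follows that the set of length-$2$ edges is exactly the support of $\gamma_{-4}$ and the set of length-$3$ edges exactly the support of $\gamma_{-3}$.

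Next I would count and locate these edges via Proposition \ref{vecteurdegrossshimura0}. Under the hypothesis of the first bullet ($q$ inert and $p$ split in $O_{-4}$), and using $h(-4)=1$, that proposition says $\gamma_{-4}$ consists of exactly $2h(-4)=2$ edges, namely the reductions modulo $\mathcal{Q}$ of the two $p$-isogenies issued from the unique curve $E/\overline{\Q}$ with complex multiplication by $O_{-4}$. This curve reduces to the supersingular curve $E_{1728}$, and each target $E/C$ again has CM by $O_{-4}$ (the isogeny corresponds to a prime of $O_{-4}$ above $p$), hence also reduces to $E_{1728}$; so both edges join $E_{1728}\in S_1$ to $E_{1728}\in S_2$. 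To see they are exchanged by $w_q$, I would reuse the argument of Proposition \ref{actionatkinlehner}: $w_q$ acts through an element $\sigma\in\Gal(\overline{\Q}/\Q)$ reducing to the $q$-Frobenius, whose restriction to $\Q(i)$ is the Frobenius at $q$; since $q$ is inert in $O_{-4}$ this restriction is complex conjugation, which interchanges the two primes above the split prime $p$ and therefore the two $p$-isogenies. The same argument verbatim, with $O_{-3}$, $h(-3)=1$ and $E_0$, yields the second bullet, with two edges of length $3$.

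For the final \emph{en particulier} assertion I would simply note that the non-ramified case of Ogg (Proposition \ref{casdeogg}) forces $q\equiv 3 \mod 4$, so $\legendre{-4}{q}=\legendre{-1}{q}=-1$ and $q$ is inert in $O_{-4}$, and $p\equiv 1 \mod 4$, so $p$ is split in $O_{-4}$; the first bullet then applies and gives exactly two exceptional edges of length $2$, which, being interchanged by $w_q$, descend to a single exceptional edge of length $2$ on $\mathscr{G}((X^{pq}/w_q)_{\F_p})$. The step I expect to require the most care is this $w_q$-equivariance: one must be sure that $\sigma$ genuinely permutes the two reduced isogenies rather than fixing them individually, which rests both on the precise splitting behaviour of $p$ combined with the action of the $q$-Frobenius on $\Q(i)$, and on the bijectivity asserted in Proposition \ref{vecteurdegrossshimura0} guaranteeing that the two isogenies reduce to two distinct edges.
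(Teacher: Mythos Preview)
Your proposal is correct and follows essentially the same route as the paper's own proof: both identify the exceptional edges as the support of $\gamma_{-4}$ (resp.\ $\gamma_{-3}$), invoke Proposition~\ref{vecteurdegrossshimura0} with $h(-4)=h(-3)=1$ to obtain exactly two such edges joining $E_{1728}$ to $E_{1728}$ (resp.\ $E_0$ to $E_0$), and argue that a lift $\sigma$ of the $q$-Frobenius acts as complex conjugation on $O_{-4}$ (resp.\ $O_{-3}$), hence swaps the two primes above the split prime $p$, so that $w_q$ exchanges the two reduced isogenies. Your explicit remark that embeddings of the maximal orders $O_{-4},O_{-3}$ are automatically optimal, and your flagging of the distinctness of the two reduced edges as the delicate point, are slightly more careful than the paper, which leaves both implicit.
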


\begin{proof}
 
Comme dans la proposition \ref{epaisseur}, on voit que les ar\^etes de longueur $2$ sont les ar\^etes de $\gamma_{-4}$ et les ar\^etes de longueur $3$ sont les ar\^etes de $\gamma_{-3}$. 

D'apr\`es la formule des traces d'Eichler, si $q$ est scind\'e ou $p$ est inerte dans $O_{-4}$, le vecteur $\gamma_{-4}$ est nul. Supposons que $q$ est inerte et $p$ scind\'e dans $O_{-4}$. Soit $E/\overline{\Q}$ l'unique courbe elliptique ayant multiplication par $O_{-4}$. On sait que le $j$-invariant de $E$ est $1728$. D'apr\`es la proposition \ref{vecteurdegrossshimura0}, les ar\^etes de $\gamma_{-4}$ correspondent aux r\'eductions modulo $q$ des $2$ $p$-isog\'enies entre courbes elliptiques \`a multiplication complexe par $O_{-4}$.
 Soit $\sigma\in \Gal(\overline{\Q} / \Q)$ qui se r\'eduit en le Frobenius modulo $q$. Le morphisme $\sigma$ agit comme la conjugaison complexe sur $O_{-4}$, car $q$ est inerte dans $O_{-4}$. Comme $p$ est scind\'e dans $O_{-4}$, l'id\'eal $pO_{-4}$ se d\'ecompose en le produit de deux id\'eaux $P$ et $\sigma(P)$ dans $O_{D}$. Par la th\'eorie de la multiplication complexe, ces id\'eaux correspondent \`a deux $p$-isog\'enies conjugu\'ees par $\sigma$, not\'ees $[P]$ et $[\sigma(P)]$, de $E\rightarrow E$. Ces deux $p$-isog\'enies se r\'eduisent en deux ar\^etes du graphe $\mathscr{G}(X^{pq}_{\F_p})$ reliant le sommet $E_{1728}\in S_1$ au sommet $E_{1728}\in S_2$. Ces deux ar\^etes sont \'echang\'ees par $w_q$ qui agit comme le Frobenius sur le graphe, ce qui nous donne la forme annonc\'ee de l'ensemble des ar\^etes de $\gamma_{-4}$. 

On traite le cas des ar\^etes de longueur $3$ de mani\`ere similaire. 

\end{proof}

\begin{prop}
\label{intersection}
Soient $O_{D_1}$ et $O_{D_2}$ deux ordres quadratiques imaginaires de discriminant respectifs $D_1$ et $D_2$ premiers \`a $q$.  Supposons aussi que $q$ soit inerte dans $O_{D_1}$ et $O_{D_2}$. Il existe une borne $B$ d\'ependant de $q$ telle que, pour $p\ge B$, si $p$ est scind\'e dans ces deux ordres, les vecteurs de Gross $\gamma_{D_1}$ et $\gamma_{D_2}$ sur le graphe dual de la fibre en $p$ de la courbe de Shimura $X^{pq}$ sont sans ar\^etes communes. 
\end{prop}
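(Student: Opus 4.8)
Le plan est de traduire l'existence d'une ar\^ete commune en une contrainte arithm\'etique sur $p$, via un calcul de discriminant r\'eduit dans $B_{q\infty}$. D'apr\`es la d\'efinition \ref{defvecteurdegross}, une ar\^ete $e=(E_{j_k},C_p)$ appara\^it dans $\gamma_{D_i}$ si et seulement si $h_e(D_i)>0$, c'est-\`a-dire si l'ordre d'Eichler $R=\End(e)$, de niveau $p$ et donc de discriminant r\'eduit $pq$ dans $B_{q\infty}$, admet un plongement optimal de $O_{D_i}$. Une ar\^ete commune \`a $\gamma_{D_1}$ et $\gamma_{D_2}$ fournirait ainsi un ordre d'Eichler $R$ muni de deux plongements optimaux $\phi_1\colon O_{D_1}\hookrightarrow R$ et $\phi_2\colon O_{D_2}\hookrightarrow R$. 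Je montrerais d'abord, en utilisant l'hypoth\`ese $D_1\neq D_2$ et l'optimalit\'e, que les images engendrent toute l'alg\`ebre $B_{q\infty}$~: dans le cas contraire elles engendreraient une sous-alg\`ebre commutative, donc un corps quadratique $K'\subset B_{q\infty}$ contenant $\phi_1(O_{D_1})$ et $\phi_2(O_{D_2})$; l'optimalit\'e donnerait alors $\phi_1(O_{D_1})=K'\cap R=\phi_2(O_{D_2})$, d'o\`u $D_1=D_2$, ce qui est exclu.

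Pour chaque $i$ je choisirais un g\'en\'erateur $\alpha_i$ de $O_{D_i}$ et poserais $\omega_i=2\phi_i(\alpha_i)-\Tr(\alpha_i)\in R$, \'el\'ement de trace nulle v\'erifiant $\omega_i^2=D_i$. L'\'etape de calcul consiste \`a \'evaluer le discriminant r\'eduit de l'ordre $O'=\Z+\Z\omega_1+\Z\omega_2+\Z\omega_1\omega_2\subseteq R$ (qui est bien un ordre, car $2c:=\Tr(\omega_1\omega_2)\in\Z$ assure la stabilit\'e par multiplication, et il est de rang $4$ par la non-d\'eg\'en\'erescence pr\'ec\'edente). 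En d\'eveloppant la matrice de Gram de la forme trace r\'eduite dans la base $(1,\omega_1,\omega_2,\omega_1\omega_2)$, on trouve un d\'eterminant de valeur absolue $16(D_1D_2-c^2)^2$, d'o\`u un discriminant r\'eduit \'egal \`a $4\abs{D_1D_2-c^2}$. Comme $O'\subseteq R$, le discriminant r\'eduit $pq$ de $R$ divise celui de $O'$, ce qui donne
\[ pq \mid 4\,(D_1D_2-c^2). \]

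Enfin, l'alg\`ebre $B_{q\infty}$ \'etant ramifi\'ee \`a l'infini, la forme norme est d\'efinie positive sur les \'el\'ements de trace nulle. Un calcul imm\'ediat identifie $c$ (au signe pr\`es) au produit scalaire de $\omega_1$ et $\omega_2$ pour cette forme, de sorte que l'in\'egalit\'e de Cauchy--Schwarz fournit $c^2\le \Norm(\omega_1)\Norm(\omega_2)=D_1D_2$, tandis que la non-d\'eg\'en\'erescence garantit $D_1D_2-c^2\neq 0$. On obtient donc $0<4(D_1D_2-c^2)\le 4D_1D_2$, et la divisibilit\'e ci-dessus force
\[ pq \le 4\,D_1 D_2. \]
Il suffit alors de poser $B=4D_1D_2/q$~: pour $p\ge B$ aucune ar\^ete commune ne peut exister, et $B$ ne d\'epend que de $q$ (et des discriminants fix\'es $D_1,D_2$). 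L'obstacle principal me semble \^etre le passage \og deux plongements dans un ordre commun $\Rightarrow$ in\'egalit\'e num\'erique \fg, \ c'est-\`a-dire le calcul exact du discriminant r\'eduit de $O'$ joint \`a la majoration de Cauchy--Schwarz~; ce dernier point repose de fa\c con essentielle sur le caract\`ere d\'efini de $B_{q\infty}$ (ramification \`a l'infini), sans lequel on ne contr\^olerait pas le terme crois\'e $c$.
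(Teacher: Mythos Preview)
Your argument is correct and in fact sharper than the one in the paper. Both proofs begin identically, by observing that a common edge yields an Eichler order $R$ of level $p$ receiving optimal embeddings of $O_{D_1}$ and $O_{D_2}$, and that the images of generators cannot commute (else optimality would force $D_1=D_2$). From there the two diverge. The paper works \emph{locally at $p$}: it uses the description of the two edges issuing from a given optimal embedding as the eigenspaces of $\phi_i(\alpha_i)$ on $E[p]$, and argues that since $\phi_1(\alpha_1),\phi_2(\alpha_2)$ generate $B_{q\infty}$ over $\Q$, they generate $M_2(\F_p)$ for $p$ not dividing a certain index $M$, hence share no eigenspace; one then takes the maximum of finitely many such $M$ over all pairs of optimal embeddings into maximal orders. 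Your approach is \emph{global and quantitative}: you bound the reduced discriminant of the suborder $O'=\Z\langle\omega_1,\omega_2\rangle\subseteq R$ as $4\abs{D_1D_2-c^2}$, use $pq\mid d(O')$, and invoke Cauchy--Schwarz for the definite norm form to get $pq\le 4D_1D_2$. This yields the explicit bound $B=4D_1D_2/q$, whereas the paper's bound is ineffective. Your route bypasses the eigenspace interpretation entirely and uses only lattice arithmetic in $B_{q\infty}$; the paper's route, on the other hand, makes the geometric meaning of the edges (as subgroups $C_p\subset E[p]$) more transparent.
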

\begin{proof}

Soient $\alpha_1$ et $\alpha_2$ tels que $O_{D_i}=\Z[\alpha_i]$, pour $i = 1, 2$. 
Soient $\phi_i \colon O_{D_i}\rightarrow \Omega$ des plongements optimaux des ordres $O_{D_i}$ dans un m\^eme ordre maximal $\Omega=\End(E)$, o\`u $E/\F_q$ est une courbe elliptique supersinguli\`ere. 

Remarquons d'abord que $\phi_1(\alpha_1)$ et $\phi_2(\alpha_2)$ ne commutent pas. En effet, s'ils commutent, $\Q[\phi_1(\alpha_1),\phi_2(\alpha_2)]$ est un sous-corps commutatif de $B_{q\infty}$ strictement plus grand que $\Q$. Donc il est de degr\'e $2$ sur $\Q$ et est isomorphe \`a $\Q[\phi_1(\alpha_1)]$ et \`a $\Q[\phi_2(\alpha_2)]$, ce qui n'est possible que si ces deux derniers corps sont \'egaux. Comme les plongements $\phi_i$ sont optimaux, $\Q[\phi_i(\alpha_i)]\cap\Omega=\Z[\phi_i(\alpha_i)]= \phi_i(O_{D_i})$, ce qui est impossible car, par hypoth\`ese, $O_{D_1}\neq O_{D_2}$.

Comme dans le corollaire \ref{vecteurdegrossshimura}, chacun de ces plongements $\phi_i$ d\'efinit deux plongements optimaux $O_{D_i}\rightarrow \End(E,C_{p,j}^i)$ dans des ordres d'Eichler de niveau $p$, o\`u $i=1,2$, $j=1,2$ et $C_{p,j}^i$ est un sous-groupe d'ordre $p$ de $E$. Plus pr\'ecis\'ement $C_{p,1}^i$ et $C_{p,2}^i$ sont les deux sous-espaces propres du morphisme restreint $\phi_i(\alpha_i)|_{E[p]}$. Pour prouver que pour $p$ assez grand il n'y a pas de plongement optimal de $O_{D_1}$ et $O_{D_2}$ dans un m\^eme ordre d'Eichler de niveau $p$, il suffit de prouver que pour $p$ assez grand $\phi_1(\alpha_1)|_{E[p]}$ et $\phi_2(\alpha_2)|_{E[p]}$ n'ont pas d'espace propre commun. On identifie ces restrictions avec $(\phi_1(\alpha_1) \mod p)$ et $(\phi_2(\alpha_2) \mod p)$ qui sont des \'el\'ements de $\Omega\otimes\F_p\simeq M_2(\F_p)$. Comme $\phi_1(\alpha_1)$ et $\phi_2(\alpha_2)$ ne commutent pas dans $B_{q\infty}$, on a $\Q[\phi_1(\alpha_1),\phi_2(\alpha_2)]=B_{q\infty}$. Donc il existe une constante $M \in\Z$ ne d\'ependant que de $\phi_1(\alpha_1),\phi_2(\alpha_2)$, telle que $\Omega\subseteq \frac{1}{M}\Z[\phi_1(\alpha_1),\phi_2(\alpha_2)]$. Pour $p$ ne divisant pas $M$, on a $\Z[\phi_1(\alpha_1),\phi_2(\alpha_2)]\otimes \F_p=M_2(\F_p)$. Dans ce cas $(\phi_1(\alpha_1) \mod p)$ et $(\phi_2(\alpha_2) \mod p)$ n'ont pas d'espace propre commun, sinon ils engendreraient un sous-groupe de Borel de $M_2(\F_p)$. 

Lorsque $\phi_1$ et $\phi_2$ parcourent les ensembles finis des plongements optimaux de $O_{D_1}$ et $O_{D_2}$ dans les ordres maximaux, $M$ prend un nombre fini de valeurs. Donc pour $p$ sup\'erieur \`a toutes ces valeurs, les vecteurs $\gamma_{D_1}$ et $\gamma_{D_2}$ sont sans ar\^ete commune. 

\end{proof}
\section{Groupe des composantes de $\Jac(X^{pq}/w_q)_{\F_p}$}

On consid\`ere le mod\`ele r\'egulier $\widetilde{{X^{pq}/w_q}}$ sur $\Z_p$ du quotient d'Atkin-Lehner $X^{pq}/w_q$, obtenu par \'eclatement aux points singuliers du mod\`ele d\'ecrit par Cherednik et Drinfeld. Parent et Yafaev ont prouv\'e le pr\'esent lemme sur le groupe des composantes de la fibre en $p$ de la jacobienne du quotient d'Atkin-Lehner $X^{pq}/w_q$. 

\begin{lem}
\label{electro}
Soient $p,q$ deux nombres premiers tels que $g(X_0(q))\ge 5$, $q\equiv 3 \mod 4$, $p \equiv 5 \mod 12$ et $\legendre{q}{p}=-1$. Soient $\mathcal{J}$ la composante irr\'eductible exceptionnelle de $\widetilde{{X^{pq}/w_q}}$ provenant de la multiplication par $\zeta_4$ et $J\neq \mathcal{J}$ une autre composante irr\'eductible de $\widetilde{{X^{pq}/w_q}}$. Pour $p\gg q$, on a $(p+1)(\mathcal{J}-J)\neq 0$ dans le groupe des composantes de $\Jac(X^{pq}/w_q)_{\F_p}$.  
\end{lem}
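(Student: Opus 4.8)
Le plan est de ramener l'énoncé à une question sur le groupe critique du graphe supersingulier, puis de la résoudre par un argument d'accouplement de monodromie. Je commencerais par l'analyse locale du modèle régulier $\widetilde{X^{pq}/w_q}$ au voisinage de $\mathcal{J}$. Sous les hypothèses $q\equiv 3 \bmod 4$ et $p\equiv 5 \bmod 12$, le nombre $p$ est décomposé dans $O_{-4}$ et inerte dans $O_{-3}$ : d'après le corollaire \ref{epaisseur2}, il existe sur $\mathscr{G}(X^{pq}_{\F_p})$ exactement deux arêtes de longueur $2$, échangées par $w_q$, et aucune arête de longueur $3$. Sur le quotient subsiste donc un unique point singulier d'épaisseur $2$, dont la désingularisation insère l'unique $\Pro^1$ exceptionnel $\mathcal{J}$, de self-intersection $-2$, rencontrant transversalement exactement deux composantes $J_1,J_2$ (images dans $S_1$ et $S_2$ de $E_{1728}$). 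La ligne de la matrice d'intersection associée à $\mathcal{J}$ fournit alors, dans le groupe des composantes $\Phi:=\Phi(\Jac(X^{pq}/w_q)_{\F_p})$, la relation $2\mathcal{J}\equiv J_1+J_2$. Comme $p$ est impair, $\tfrac{p+1}{2}\in\Z$ et l'on en déduit
$$(p+1)(\mathcal{J}-J)\equiv \tfrac{p+1}{2}\,(J_1+J_2-2J)\quad\text{dans }\Phi.$$
On est ainsi ramené à prouver que l'élément $v:=J_1+J_2-2J$, supporté par les composantes non exceptionnelles, vérifie $\tfrac{p+1}{2}\,v\neq 0$ dans $\Phi$.

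Ensuite j'exploiterais la description de Cherednik--Drinfeld : $X^{pq}$ ayant réduction totalement dégénérée en $p$, $\Jac(X^{pq})$ a réduction purement torique et $\Phi$ s'identifie au groupe critique du graphe dual, c'est-à-dire au conoyau, en degré $0$, d'un laplacien $L$. Grâce à la $(p+1)$-régularité (pondérée par les longueurs) du graphe supersingulier, on a $L=(p+1)I-A$, où $A$ est la matrice d'adjacence donnée par les $p$-isogénies, qui n'est autre que la matrice de Brandt calculant l'opérateur de Hecke $T_p$ sur $\mathcal{P}_S$ ; après élimination de $\mathcal{J}$ via la relation précédente, $v$ vit dans ce conoyau. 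On passe au quotient par $w_q$ en prenant la partie invariante, de façon compatible à l'action de $\T_{\Gamma_0(pq)}$. Enfin l'accouplement de monodromie (cf. \cite{SGA7-I}), $\Jac$ étant principalement polarisée, induit un accouplement parfait $\coupl{\cdot,\cdot}\colon\Phi\times\Phi\to\Q/\Z$ qui, sur le groupe critique, est la forme de liaison $\coupl{x,y}\equiv x^{\mathrm{t}}L^{+}y \pmod{\Z}$ pour un inverse généralisé $L^{+}$ de $L$.

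Pour détecter la non-nullité, je décomposerais $v$ selon les formes propres de $\T_{\Gamma_0(pq)}$. Le vecteur $v$ est de degré $0$, donc orthogonal à la direction d'Eisenstein (noyau de $L$) ; sur la composante associée à une forme primitive $f$, l'opérateur $L$ agit par $(p+1)-a_p(f)$, entier strictement positif par la borne de Weil $\lvert a_p(f)\rvert\le 2\sqrt p$. L'autoaccouplement $\tfrac{p+1}{2}\coupl{v,v}=\tfrac{p+1}{2}\sum_f \tfrac{c_f^2}{(p+1)-a_p(f)}$ fait ainsi apparaître au dénominateur les entiers $(p+1)-a_p(f)$. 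Comme $(p+1)-a_p(f)\equiv -a_p(f)$ modulo tout diviseur premier de $p+1$, pour $p$ grand il existe une forme $f$ sur laquelle $v$ se projette non trivialement et telle que $(p+1)-a_p(f)$ admette un facteur premier $r$ ne divisant pas $p+1$, donc ne divisant pas $\tfrac{p+1}{2}$ : la multiplication par $\tfrac{p+1}{2}$ ne peut alors annuler la classe. L'hypothèse $g(X_0(q))\ge 5$ garantit un nombre suffisant de formes primitives (et écarte les petits cas), tandis que $\legendre{q}{p}=-1$ fixe la structure des composantes du quotient dans le cas non ramifié de Ogg (proposition \ref{casdeogg}).

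La partie la plus délicate, et l'obstacle principal, est précisément ce dernier contrôle arithmétique : il faut garantir, uniformément pour $p\gg q$, que $v=J_1+J_2-2J$ se projette non trivialement sur une forme $f$ dont $(p+1)-a_p(f)$ possède un facteur premier premier à $p+1$ et au numérateur correspondant, en maîtrisant les compensations possibles dans la somme sur les $f$ et les nombres premiers d'Eisenstein $2$ et $3$ (contrôlés ici par $p\equiv 5 \bmod 12$). Une voie alternative, que je privilégierais pour éviter les compensations, consiste à utiliser la fidélité du $\T_{\Gamma_0(pq)}$-module $\Phi$ modulo l'idéal d'Eisenstein (cf. \cite{Ribet}) : on localise en un idéal maximal $\mathfrak{m}$ de caractéristique résiduelle $r\nmid p+1$ où $v\not\equiv 0$, et l'on compare l'ordre $r$-primaire de la classe de $v$ à $\tfrac{p+1}{2}$, ce qui redonne la conclusion pour $p\gg q$ comme dans \cite[lemma~3.1.3]{ParentYafaev}.
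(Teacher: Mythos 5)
Your opening reductions are sound, and they essentially re-derive what the paper encodes as the \og Loi K\fg\ (lemme \ref{loiK}) : la relation $2\mathcal{J}\equiv J_1+J_2$ issue de la ligne de la matrice d'intersection en la $(-2)$-courbe exceptionnelle, et l'identification du groupe des composantes au groupe critique (conoyau du laplacien) du graphe dual, sont exactement la description de Raynaud \cite[thm 1, p 274]{BLR}. Mais la preuve comporte une lacune réelle, située précisément là où réside tout le contenu du lemme : l'étape de non-annulation. Dans votre argument spectral, il faut produire une forme propre $f$ telle que $c_f\neq 0$ et que $(p+1)-a_p(f)$ fournisse un facteur premier étranger à $p+1$ \emph{qui ne soit pas compensé} par les autres termes de $\sum_f c_f^2/\bigl((p+1)-a_p(f)\bigr)$ modulo $\Z$ ; vous l'affirmez pour $p\gg q$ sans aucun mécanisme pour l'obtenir, et vous reconnaissez vous-même que c'est l'obstacle principal. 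La voie de repli n'est pas meilleure : localiser en un idéal maximal $\mathfrak{m}$ de caractéristique résiduelle première à $p+1$ \og où $v\not\equiv 0$\fg\ est exactement ce qu'il s'agit de démontrer, et vous concluez en invoquant \cite[lemma 3.1.3]{ParentYafaev} --- qui \emph{est} l'énoncé du lemme \ref{electro} : l'argument devient circulaire. S'y ajoutent des points techniques non traités : les $a_p(f)$ sont des entiers algébriques et non rationnels (la notion de \og facteur premier de $(p+1)-a_p(f)$\fg\ exige normes ou idéaux premiers), et le groupe des composantes de $\Jac(X^{pq}/w_q)_{\F_p}$ n'est pas simplement la partie $w_q$-invariante de celui de $\Jac(X^{pq})_{\F_p}$.

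Pour comparaison : le texte démontre le lemme \ref{electro} par simple renvoi à Parent--Yafaev, et lorsqu'il a besoin d'un argument effectif (pour la généralisation, lemme \ref{general}), il ne passe pas du tout par une décomposition de Hecke ou une forme de liaison ; il utilise la formulation en potentiels entiers (Loi K) combinée aux comptages asymptotiques d'arêtes (lemmes \ref{repartition} et \ref{repar2}) pour aboutir à des contradictions de congruence --- c'est-à-dire un argument combinatoire de réseau électrique, où les minorations du type $N(C_1,C_2)=\frac{p+1}{w(A_E)\epsilon w(C_1)w(C_2)}+O_q(\sqrt p)$ remplacent votre contrôle arithmétique des dénominateurs spectraux. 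Votre stratégie n'est pas absurde en soi, mais en l'état elle ne constitue pas une preuve : il faudrait réellement établir la projection non triviale de $v=J_1+J_2-2J$ sur une composante propre convenable et exclure les compensations, ce qui n'est fait nulle part.
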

\begin{proof}
Cela d\'ecoule du lemme \cite[lemma 3.2.3]{ParentYafaev}. 
\end{proof}

Le but de ce paragraphe est de g\'en\'eraliser le lemme \ref{electro} en supprimant la condition $p \equiv -1 \mod 3$. Nous allons prouver le lemme ci-dessous : 
\begin{lem}
\label{general} Soient $p,q$ deux nombres premiers tels que $g(X_0(q))\ge 6$, $q\equiv 3 \mod 4$, $p \equiv 1 \mod 4$ et $\legendre{q}{p}=-1$. Soient $\mathcal{J}$ la composante irr\'eductible exceptionnelle de $\widetilde{{X^{pq}/w_q}}$ provenant de la multiplication par $\zeta_4$ et $J\neq \mathcal{J}$ une autre composante irr\'eductible de $\widetilde{{X^{pq}/w_q}}$. Pour $p\gg q$, on a $(p+1)(\mathcal{J}-J)\neq 0$ dans le groupe des composantes de $\Jac(X^{pq}/w_q)_{\F_p}$. 
\end{lem}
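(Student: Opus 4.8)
Le plan est de se ramener au cas $p \equiv 1 \pmod 3$, puis d'interpr\'eter l'\'enonc\'e comme un calcul de tension dans un r\'eseau \'electrique sur le graphe dual r\'esolu. Si $p \equiv -1 \pmod 3$, alors joint \`a $p \equiv 1 \pmod 4$ on a $p \equiv 5 \pmod{12}$; comme $g(X_0(q)) \ge 6 \ge 5$ et que les autres hypoth\`eses co\"incident, le lemme \ref{electro} de Parent et Yafaev s'applique directement. On suppose donc d\'esormais $p \equiv 1 \pmod 3$.

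Le groupe des composantes $\Phi$ de $\Jac(X^{pq}/w_q)_{\F_p}$ se lit sur la matrice d'intersection du mod\`ele r\'egulier $\widetilde{X^{pq}/w_q}$, c'est-\`a-dire sur le laplacien $\Delta$ du graphe dual r\'esolu (toutes les ar\^etes de longueur $1$ apr\`es \'eclatement). L'annulation de $(p+1)(\mathcal J - J)$ dans $\Phi$ \'equivaut \`a la r\'esolubilit\'e sur $\Z$ de l'\'equation $\Delta f = (p+1)(\mathcal J - J)$, l'obstruction \'etant la non-int\'egralit\'e de la tension, mesur\'ee par la dualit\'e parfaite sur $\Phi$ induite par l'accouplement de monodromie $\Phi \times \Phi \to \Q/\Z$. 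Par auto-accouplement, il revient au m\^eme de montrer que $(p+1)\,R(\mathcal J, J) \notin \Z$, o\`u $R(\mathcal J, J)$ d\'esigne la r\'esistance effective entre $\mathcal J$ et $J$.

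La composante $\mathcal J$ est le sommet exceptionnel du point singulier de longueur $2$: de degr\'e $2$, de voisins $A$ et $B$ (images des sommets $E_{1728}$ de $S_1$ et $S_2$), d'auto-intersection $-2$. D'o\`u la relation $2[\mathcal J] = [A] + [B]$ dans $\Phi$, et $R(\mathcal J, J) = f(\mathcal J) = \tfrac{1}{2}(f(A) + f(B) + 1)$ pour $f$ de potentiel nul en $J$. Comme $p \equiv 1 \pmod 4$, \'ecrivons $p + 1 = 2u$ avec $u$ impair; alors $(p+1)R(\mathcal J, J) = u(f(A) + f(B) + 1)$, et $u$ \'etant une unit\'e $2$-adique, il suffit de prouver que $f(A) + f(B)$ a un d\'enominateur pair, \ie que la classe $[A + B - 2J]$ est non nulle dans la partie $2$-primaire $\Phi[2^\infty]$.

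Il reste \`a traiter le point singulier de longueur $3$. S'il est absent ($q \equiv 1 \pmod 3$), le calcul $2$-adique est exactement celui de Parent et Yafaev. S'il est pr\'esent ($q \equiv 2 \pmod 3$), son \'eclatement produit une cha\^ine $A' - \mathcal K_1 - \mathcal K_2 - B'$; aucun courant n'\'etant inject\'e aux sommets $\mathcal K_1, \mathcal K_2$ de degr\'e $2$, on les \'elimine par compl\'ement de Schur sur $\Z_2$, les rempla\c cant par une unique ar\^ete de conductance $1/3 \in \Z_2^\times$. Ce point ne contribue ainsi qu'\`a la partie $3$-primaire de $\Phi$, et comme $3 \nmid p + 1$ il est sans effet sur l'\'enonc\'e $2$-primaire. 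L'obstacle principal est le calcul de tension $2$-adique \'etablissant $f(A) + f(B) \notin \Z_2$ uniform\'ement pour $p \gg q$: c'est la partie arithm\'etique centrale, adapt\'ee de la preuve du lemme \ref{electro}, o\`u l'hypoth\`ese $g(X_0(q)) \ge 6$ sert \`a choisir $J$ hors du lieu exceptionnel et \`a exploiter la sym\'etrie $\bar w_p$ \'echangeant $A$ et $B$, garantissant le d\'enominateur pair voulu.
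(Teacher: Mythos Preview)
Your reduction to the case $p\equiv 1\pmod 3$ and your electrical-network setup are correct and match the paper's framework. From there, however, the proposal diverges from the paper and leaves the essential work undone.

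\textbf{First gap: the case $J\in\{\mathcal K_1,\mathcal K_2\}$.} The statement is for \emph{every} component $J\neq\mathcal J$, including the two exceptional vertices coming from the thickness-$3$ point when $q\equiv -1\pmod 3$. Your Schur-complement elimination of $\mathcal K_1,\mathcal K_2$ explicitly assumes ``aucun courant n'\'etant inject\'e aux sommets $\mathcal K_1,\mathcal K_2$'', which fails precisely when $J=\mathcal K_1$ or $J=\mathcal K_2$. You then write that $g(X_0(q))\ge 6$ ``sert \`a choisir $J$ hors du lieu exceptionnel'', but you are not free to choose $J$: it is universally quantified. In the paper this case is treated separately and at length (lemme~\ref{J-J1} and its six sous-lemmes), via asymptotic inequalities on the potentials; nothing in your outline covers it.

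\textbf{Second gap: the ``calcul $2$-adique'' is not carried out.} Even for $J\notin\{\mathcal K_1,\mathcal K_2\}$, your argument reduces everything to the claim $f(A)+f(B)\notin\Z_2$, and then says this is ``adapt\'ee de la preuve du lemme~\ref{electro}''. But the proof of lemme~\ref{electro} (i.e.\ \cite[lemma~3.1.3]{ParentYafaev}) is not $2$-adic at all: it shows non-existence of an integral potential by real inequalities comparing edge-counts $N(C_1,C_2)\sim (p+1)/(w(A_E)\epsilon\, w(C_1)w(C_2))$ against the injected current $p+1$. There is no visible way to extract from that argument a statement about the $2$-adic valuation of $f(A)+f(B)$; and the symmetry $\bar w_p$ you invoke exchanges $A$ and $B$ but generally moves $J$ as well, so it does not by itself force the parity you want. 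The paper does not attempt any such $2$-adic route: for $J\neq\mathcal J_1,\mathcal J_2$ it simply observes that replacing $N(G_1,G_2)$ by $N(G_1,G_2)+\tfrac13$ (your Schur step) leaves all the asymptotic estimates of \cite[lemma~3.1.3]{ParentYafaev} unchanged, so that proof goes through verbatim.

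In short, your outline identifies the right reduction but then asserts rather than proves the two substantive pieces. If you want to pursue the $2$-adic idea, you would need a genuinely new computation of the $2$-part of $\Phi$ and of the relevant monodromy pairings, independent of Parent--Yafaev's inequality argument; otherwise, follow the paper's approach of reusing their estimates after the $\tfrac13$-modification, and handle $J=\mathcal J_1,\mathcal J_2$ by a direct potential analysis.
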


\begin{rmq}
\label{genre}
D'apr\`es \cite[Prop. 1.43]{Shimura}, le genre de $X_0(q)$ est la partie enti\`ere $\lfloor\frac{q+1}{12}\rfloor$ de $\frac{q+1}{12}$ si $q\not\equiv 1 \mod 12$ et $\lfloor\frac{q+1}{12}\rfloor-1$ si $q\equiv 1 \mod 12$. On en d\'eduit que si $q\ge 79$ alors $g(X_0(q))\ge 6$. D'autre part d'apr\`es la d\'efinition \ref{EisensteinModulaire}, le poids du vecteur d'Eisenstein modulaire est $w(A_E)=\sum_{k=1}^{g+1} (\card(\End(E_{j_k})^*/\{\pm 1\}))^{-1},$ o\`u $g=g(X_0(q))$. Comme, d'apr\`es le corollaire \ref{epaisseur}, $\card(\End(E_{j_k})^*/\{\pm 1\})=1$ sauf au plus pour deux valeurs de $j_k$, on a $w(A_E)>g(X_0(q))-1\ge 5$. 
\end{rmq}

\begin{ntt}
\label{notationpar3}
Pour faciliter la compr\'ehension des notations qui suivent, nous invitons le lecteur \`a consulter l'exemple \ref{graphe} ci-dessous. On note $\mathcal{G}(\widetilde{{(X^{pq}/w_q)}}_{\F_p})$ le graphe dual de la fibre en $p$ de $\widetilde{{X^{pq}/w_q}}$. Rappelons que ce graphe est le graphe obtenu \`a partir du graphe $\mathcal{G}((X^{pq}/w_q)_{\F_p})$ en rempla\c cant chaque ar\^ete exceptionnelle de longueur $l$ par une cha\^ine  de $l$ ar\^etes de longueur 1. On renvoie au corollaire \ref{epaisseur2} pour une description de ces ar\^etes. On d\'esigne par $S_1'=S_1/w_q$ et $S_2'=S_2/w_q$ les deux partitions du graphe $\mathcal{G}((X^{pq}/w_q)_{\F_p})$. L'ensemble des sommets de $\mathcal{G}(\widetilde{{(X^{pq}/w_q)}}_{\F_p})$ est l'union disjointe de $S_1'$, $S_2'$ et de l'ensemble des sommets exceptionnels que nous allons d\'ecrire. On note $\mathcal{J}$ le sommet exceptionnel de $\mathcal{G}(\widetilde{{(X^{pq}/w_q)}}_{\F_p})$ provenant de l'\'eclatement de $(X^{pq}/w_q)/ \Z_p$ en l'unique point singulier d'\'epaisseur $2$. Le sommet $\mathcal{J}$ est reli\'e par une ar\^ete \`a chacun des deux sommets $J_1\in S_1'$ et $J_2\in S_2'$ correspondant au $j$-invariant $1728 \mod q$. 
Lorsque $q \equiv -1 \mod 3$ et $p\equiv 1 \mod 3$, on note $G_1\in S_1'$ et $G_2\in S_2'$ les deux sommets correspondant \`a la classe du $j$-invariant $0 \mod q$. Les sommets $G_1$ et $G_2$ sont reli\'es \`a deux sommets exceptionnels $\mathcal{J}_1$ et $\mathcal{J}_2$ provenant de l'unique point singulier d'\'epaisseur $3$ apr\`es \'eclatements successifs. On note par $j_{1,1}, j_{1,2}, \dots, j_{1,l}$ les sommets de $S_1'$ distincts de $J_1$ et $G_1$. De m\^eme $j_{2,1}, j_{2,2}, \dots, j_{2,l}$ d\'esignent les sommets de $S_2'$ distincts de $J_2$ et $G_2$. 

Soient $a$ et $b$ deux sommets du graphe. On note $N(a,b)$ le nombre d'ar\^etes entre les deux sommets $a$ et $b$, et $N(a)$ la puissance du sommet $a$ (\ie le nombre d'ar\^etes qui partent du sommet $a$ ou arrivent au sommet $a$). 
\end{ntt}

\begin{exm}
\label{graphe}
Graphe de $\mathcal{G}(\widetilde{{(X^{13*47}/w_{47})}}_{\F_{13}})$.\\
\begin{psmatrix}[mnode=circle]
&&$\mathcal{J}$ \\
$J_1$ & &  & & $J_2$ \\
$j_{1,1}$ &&&& $j_{2,1}$  \\
 $j_{1,2}$ &&&& $j_{2,2}$ \\
$j_{1,3}$ &&&& $j_{2,3}$ \\
$G_1$&  & &  & $G_2$ \\
&$\mathcal{J}_1$ && $\mathcal{J}_2$ \\
\end{psmatrix}

\psset{arrows=-}
\ncline{1,3}{2,1}
\ncline{1,3}{2,5}

\ncline{2,1}{2,5}
\ncline{2,1}{3,5}
\ncline{2,5}{3,1}
\ncarc[arcangle=5]{5,5}{2,1}
\ncarc[arcangle=5]{2,5}{5,1}
\ncarc[arcangle=5]{3,1}{3,5}
\ncarc[arcangle=5]{3,5}{3,1}
\ncarc[arcangle=4]{4,5}{3,1}
\ncarc[arcangle=10]{4,5}{3,1}
\ncarc[arcangle=4]{3,5}{4,1}
\ncarc[arcangle=10]{3,5}{4,1}
\ncline{3,1}{5,5}
\ncline{3,5}{5,1}

\ncarc[arcangle=5]{6,5}{3,1}
\ncarc[arcangle=5]{3,5}{6,1}
\ncarc[arcangle=10]{4,5}{4,1}
\ncarc[arcangle=5]{4,5}{4,1}

\ncarc[arcangle=10]{5,5}{4,1}
\ncarc[arcangle=6]{5,5}{4,1}
\ncarc[arcangle=10]{4,5}{5,1}
\ncarc[arcangle=6]{4,5}{5,1}
\ncline{6,5}{4,1}
\ncline{4,5}{6,1}

\ncarc[arcangle=10]{5,5}{5,1}
\ncarc[arcangle=15]{5,5}{5,1}
\ncarc[arcangle=20]{5,5}{5,1}

\ncline{6,1}{7,2}
\ncline{7,2}{7,4}
\ncline{7,4}{6,5}
\end{exm}
Nous invitons le lecteur \`a se r\'ef\'erer au pr\'esent exemple pour faciliter la compr\'ehension des calculs qui suivent. (On note cependant que ce graphe ne satisfait pas \`a la condition du corollaire \ref{Nnonnul} ci-dessous. En effet dans cet exemple certains des sommets de $S_1'$ ne sont pas reli\'es \`a tous les sommets de $S_2'$).

La preuve du lemme \ref{electro} repose sur la description du groupe des composantes donn\'ee par Raynaud \cite[thm 1, p 274]{BLR}.   
\begin{lem}
\label{loiK}
(\og Loi K \fg). Avec les notations du lemme \ref{general}, on a $(p+1)(\mathcal{J}-J)= 0$ dans le groupe des composantes de $\Jac(X^{pq}/w_q)_{\F_p}$, si et seulement s'il existe une fonction $\nu$ de l'ensemble des sommets $S$ de $\mathcal{G}(\widetilde{{(X^{pq}/w_q)}}_{\F_p})$ dans $\Z$ telle que pour chaque sommet $C$ de $\mathcal{G}(\widetilde{{(X^{pq}/w_q)}}_{\F_p})$, on a l'\'egalit\'e : 
$$\sum_{D\mapsto C}  \nu(C)-\nu(D)=\begin{cases} p+1 \text{ si } C=\mathcal{J}; \\ -(p+1) \text{ si } C=J; \\ 0 \text{ sinon.} \end{cases}$$
O\`u $\sum_{D\mapsto C}$ d\'esigne la somme faite sur tous les sommets $D$ voisins du sommet $C$ avec une multiplicit\'e \'egale au nombre d'ar\^etes entre $D$ et $C$. C'est-\`a-dire : 
$$\sum_{D\in S}  N(C,D)(\nu(C)-\nu(D))=\begin{cases} p+1 \text{ si } C=\mathcal{J}; \\ -(p+1) \text{ si } C=J; \\ 0 \text{ sinon.} \end{cases}$$
\end{lem}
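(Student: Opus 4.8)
The plan is to read the criterion off directly from Raynaud's description \cite[thm 1, p 274]{BLR} of the group of components $\Phi$ of the N\'eron model of $\Jac(X^{pq}/w_q)$ over $\Z_p$. Index the irreducible components of the special fiber $(\widetilde{X^{pq}/w_q})_{\F_p}$ by the set $S$ of vertices of $\mathcal{G}(\widetilde{(X^{pq}/w_q)}_{\F_p})$, writing $\Gamma_C$ for the component attached to a vertex $C$. Since $\widetilde{X^{pq}/w_q}$ is regular and the blow-ups at the singular point of thickness $2$ (and, when $q\equiv -1\bmod 3$ and $p\equiv 1 \bmod 3$, of thickness $3$) replace each thick edge by a chain of copies of $\Pro^1$, the special fiber is reduced: $\mathrm{div}(p)=\sum_{C\in S}\Gamma_C$ with every multiplicity equal to $1$. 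Raynaud's theorem then gives an isomorphism $\Phi \cong \ker(\deg)/\im(M)$, where $\deg\colon \bigoplus_{C\in S}\Z\,\Gamma_C \to \Z$ sends each $\Gamma_C$ to $1$ and $M=(\Gamma_C\cdot\Gamma_D)_{C,D\in S}$ is the intersection matrix.

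First I would make $M$ explicit. For $C\neq D$ the intersection number $\Gamma_C\cdot\Gamma_D$ is the number of points in which the two components meet, which is by definition $N(C,D)$; and the relation $\mathrm{div}(p)\cdot\Gamma_C=0$, valid because $\mathrm{div}(p)$ is a principal divisor with all multiplicities equal to $1$, forces $\Gamma_C\cdot\Gamma_C=-\sum_{D\neq C}N(C,D)=-N(C)$. Hence for any $\nu\colon S\to\Z$ one finds
$$(M\nu)(C)=\sum_{D\in S}(\Gamma_C\cdot\Gamma_D)\,\nu(D)=-N(C)\nu(C)+\sum_{D\neq C}N(C,D)\nu(D)=-\sum_{D\in S}N(C,D)\bigl(\nu(C)-\nu(D)\bigr),$$
so that $M$ is, up to sign, the combinatorial Laplacian of $\mathcal{G}(\widetilde{(X^{pq}/w_q)}_{\F_p})$. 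One may check this on the chain $J_1-\mathcal{J}-J_2$: the exceptional $(-2)$-curve $\mathcal{J}$ meets $J_1$ and $J_2$ once each, so $\Gamma_{\mathcal{J}}\cdot\Gamma_{\mathcal{J}}=-2=-N(\mathcal{J})$.

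Then I would conclude. The divisor $(p+1)(\mathcal{J}-J)$ has degree $0$, hence lies in $\ker(\deg)$, and by Raynaud it vanishes in $\Phi$ exactly when it lies in $\im(M)$, i.e. when the equation $M\nu=(p+1)(\mathcal{J}-J)$ is solvable with $\nu\in\Z^S$. Substituting the formula for $M\nu$ obtained above (and replacing $\nu$ by $-\nu$ to fix the sign) turns this into the system
$$\sum_{D\in S}N(C,D)\bigl(\nu(C)-\nu(D)\bigr)=\begin{cases} p+1 & \text{si } C=\mathcal{J},\\ -(p+1) & \text{si } C=J,\\ 0 & \text{sinon,}\end{cases}$$
which is exactly the asserted equivalence.

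The delicate point, and the only one beyond formal manipulation, is the precise normalization in Raynaud's formula: one must verify that in our situation the degree map and the intersection form are the unweighted ones, so that $M$ really is the combinatorial Laplacian rather than a weighted variant. This is where the passage to the regular model $\widetilde{X^{pq}/w_q}$ is essential, since it guarantees that every component appears with multiplicity $1$; one must also keep track of the fields of constants of the components (the reduction being most naturally defined over $\F_{p^2}$) so that no spurious factor $[k(\Gamma_C):\F_p]$ enters the degree map. Once this normalization is settled, exactly along the lines of \cite{ParentYafaev}, the statement reduces to the computations above.
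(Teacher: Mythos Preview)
Your argument is correct and is precisely the content of the cited reference: the paper does not give an independent proof but simply refers to \cite[Sublemma~3.1.3.1]{ParentYafaev}, which in turn unwinds Raynaud's presentation of the component group exactly as you do. Your write-up is thus a faithful expansion of what the paper invokes, including the caveat about normalization (reducedness of the special fiber and the field of constants of each component), which is the only point requiring care.
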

\begin{proof}
Consulter \cite[Sublemma 3.1.3.1]{ParentYafaev}. 
\end{proof}
Le lemme \ref{loiK} peut se traduire de mani\`ere \'electrodynamique, on pourra consulter \`a ce sujet \cite[\S 3.1]{ParentYafaev} ou \cite[II.1]{Edixhoven}. Identifions le graphe $\mathcal{G}(\widetilde{{(X^{pq}/w_q)}}_{\F_p})$ \`a un circuit \'electrique de la mani\`ere suivante : les sommets du graphe sont les n\oe uds du circuit et les ar\^etes du graphe sont des fils reliant ces n\oe uds. Chacun de ces fils \'etant d'une r\'esistance de $1$ Ohm. Supposons que les n\oe uds $\mathcal{J}$ et $J$ soient reli\'ees aux p\^oles d'un g\'en\'erateur \'electrique, et qu'un courant de $p+1$ Amp\`eres entre dans le circuit en le n\oe ud $J$ et quitte le circuit au n\oe ud $\mathcal{J}$. Soit $\nu$ une r\'epartition des potentiels, exprim\'ee en Volts, sur l'ensemble des n\oe uds du circuit, d\'efinie \`a une r\'epartition constante pr\`es. La diff\'erence de potentiel entre deux n\oe uds reli\'es par un fil $A$ et $B$ est $u(A,B)=\nu(A)-\nu(B)$. Chaque fil entre $A$ et $B$ est parcouru par un courant (\'eventuellement d'intensit\'e nulle) dirig\'e de $A$ vers $B$ si $u(A,B)\ge 0$ et de $B$ vers $A$ si $u(A,B)\le 0$. La loi d'Ohm affirme que si le fil $c$, de r\'esistance $r(c)$ est parcouru par un courant dirig\'e de $A$ vers $B$, dont l'intensit\'e est $i(c)$ Amp\`eres, alors la diff\'erence de potentiel $u(A,B)$ est \'egale \`a $r(c)i(c)$ Volts. Dans le cas qui nous int\'eresse $i(c)=u(A,B)=\nu(A)-\nu(B)$ comme $r(c)=1$. La loi des n\oe uds de Kirchhoff affirme que pour chaque n\oe ud $A$, la somme des intensit\'e des courants arrivant au n\oe ud $A$ est \'egale \`a la somme des intensit\'es des courants partant du sommet $A$. En combinant la loi de Kirchhoff en un n\oe ud $C$ avec la loi d'Ohm sur tous les fils partant de $C$, on retrouve les \'equations du lemme \ref{loiK}. Or d'apr\`es \cite[thm 1, p 274]{BLR} la solution $\nu$ de ce syst\`eme est unique \`a une fonction constante pr\`es. Donc prouver que $(p+1)(\mathcal{J}-J)\neq 0$ revient \`a prouver que le circuit \'electrique d\'ecrit ne peut pas admettre de r\'epartition de potentiel $\nu$ \`a valeurs enti\`eres.

Les lemmes suivants d\'ecrivent la r\'epartition des ar\^etes de $\mathcal{G}(\widetilde{{(X^{pq}/w_q)}}_{\F_p})$. 
\begin{lem}
\label{repartition}
Avec les hypoth\`eses du lemme \ref{general}, fixons $j\in\F_{q^2}$ un $j$-invariant supersingulier d\'efini \`a action de $\Gal(\F_{q^2}/\F_q)$ pr\`es, et soit $C$ le sommet de $S_1'$ ou $S_2'$ associ\'e \`a ce $j$-invariant. 
Le nombre $N(C)$ d'ar\^etes qui partent de $C$ dans le graphe $\mathcal{G}(\widetilde{{(X^{pq}/w_q)}}_{\F_p})$ est donn\'e par : 
\begin{itemize}
\item[] $(p+1)$ si $j$ appartient \`a $\F_{q^2}\setminus \F_q$; 
\item[] $(p+1)/2$ si $j$ appartient \`a $\F_q$ et $j\not\equiv 0,\ 1728$; 
\item[] $(p+3)/4$ si $j\equiv 1728$; 
\item[] $(p+1)/6$ si $j\equiv 0$ et $p\equiv -1 \mod 3$; 
\item[] $(p+5)/6$ si $j\equiv 0$ et $p\equiv 1 \mod 3$.
\end{itemize}
\end{lem}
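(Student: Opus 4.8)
Le plan est de ramener le calcul de $N(C)$ au dénombrement, dans $\mathscr{G}(X^{pq}_{\F_p})$, des arêtes incidentes à la (ou aux) préimage(s) de $C$, puis de quotienter par l'action de $w_q$. On remarque d'abord que l'éclatement ne modifie pas le nombre d'arêtes issues d'un sommet de $S_1'$ ou $S_2'$ : une arête exceptionnelle de longueur $\ell$ d'extrémité $C$ est remplacée par une chaîne de $\ell$ arêtes, mais $C$ reste extrémité d'exactement une de ces arêtes. Il suffit donc de compter les arêtes issues de $C$ dans $\mathcal{G}((X^{pq}/w_q)_{\F_p})$, c'est-à-dire les $w_q$-orbites d'arêtes de $\mathscr{G}(X^{pq}_{\F_p})$ au-dessus de $C$.

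La première étape consiste à compter les arêtes issues d'un sommet $E=E_j\in S_1$ de $\mathscr{G}(X^{pq}_{\F_p})$ : ce sont les sous-groupes cycliques $C_p\subset E[p]$ modulo $\Aut(E)$, il y en a $p+1$ en tout, et $\{\pm 1\}$ agit trivialement. Pour $j\neq 0,1728$ le groupe $\Aut(E)/\{\pm 1\}$ est trivial et l'on obtient $p+1$ arêtes, toutes de longueur $1$. Pour $j=1728$, $\Aut(E)/\{\pm 1\}=\langle\zeta_4\rangle$ est d'ordre $2$ et agit sur $E[p]\simeq\F_p^2$ avec valeurs propres $\pm\sqrt{-1}$ ; comme $p\equiv 1\bmod 4$ celles-ci sont dans $\F_p$, d'où deux sous-groupes fixes (les deux arêtes de longueur $2$ du corollaire \ref{epaisseur2}) et $(p-1)/2$ arêtes de longueur $1$. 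Pour $j=0$ (cas pertinent seulement si $q\equiv -1\bmod 3$), $\langle\zeta_3\rangle$ agit avec pour valeurs propres les racines cubiques primitives de l'unité : si $p\equiv 1\bmod 3$ elles sont dans $\F_p$, donnant deux arêtes de longueur $3$ et $(p-1)/3$ arêtes de longueur $1$ ; si $p\equiv -1\bmod 3$ il n'y a pas de sous-groupe fixe, d'où $(p+1)/3$ arêtes de longueur $1$.

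La deuxième étape est l'analyse de $w_q$. Si $j\in\F_{q^2}\setminus\F_q$, alors $w_q$ échange les deux sommets distincts $E_j$ et $E_{j^q}$ et agit librement sur les $2(p+1)$ arêtes incidentes, d'où $p+1$ orbites et $N(C)=p+1$. Si $j\in\F_q$ le sommet est fixe : les arêtes exceptionnelles sont échangées deux à deux par $w_q$ (corollaire \ref{epaisseur2}) et fournissent une unique arête dans le quotient, tandis que les arêtes de longueur $1$ sont à compter modulo $w_q$. Le point délicat — et le cœur de la preuve — est de montrer que $w_q$ agit \emph{sans point fixe} sur ces arêtes de longueur $1$. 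On choisit un modèle de $E$ sur $\F_q$ ; comme $E$ est supersingulière, $a_q=0$ et le Frobenius $\pi_q\in\End(E)$ vérifie $\pi_q^2=-q$. L'arête $(E,C_p)$ est $w_q$-fixe si et seulement s'il existe $\mu\in\Aut(E)$ avec $\mu\,\pi_q(C_p)=C_p$, c'est-à-dire si $C_p$ est stable sous $\beta:=\mu\pi_q$. Comme $\zeta_4$ (resp. $\zeta_3$) n'est pas défini sur $\F_q$ lorsque $q\equiv 3\bmod 4$ (resp. $q\equiv -1\bmod 3$), on a $\pi_q\zeta_4=-\zeta_4\pi_q$ (resp. $\pi_q\zeta_3=\bar\zeta_3\pi_q$), de sorte que $\beta^2=-q$ dans tous les cas. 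Les valeurs propres de $\beta\bmod p$ sont donc $\pm\sqrt{-q}$, et $C_p$ ne peut être stable que si $-q$ est un carré modulo $p$. Or $p\equiv 1\bmod 4$ donne $\legendre{-1}{p}=1$, donc $\legendre{-q}{p}=\legendre{q}{p}=-1$ par hypothèse : aucun sous-groupe n'est stable, et $w_q$ agit librement.

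On conclut en assemblant les comptes. Pour $j\in\F_q$, $j\neq 0,1728$ : $(p+1)/2$ orbites, soit $N(C)=(p+1)/2$. Pour $j=1728$ : l'unique arête issue des arêtes de longueur $2$, plus $(p-1)/4$ orbites d'arêtes de longueur $1$ (entier car $p\equiv 1\bmod 4$), soit $N(C)=1+(p-1)/4=(p+3)/4$. Pour $j=0$ et $p\equiv 1\bmod 3$ : $1+(p-1)/6=(p+5)/6$ (entier car $p\equiv 1\bmod 6$). Pour $j=0$ et $p\equiv -1\bmod 3$ : $(p+1)/6$ (entier car $p\equiv -1\bmod 6$). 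On obtient ainsi exactement les valeurs annoncées. La principale difficulté reste l'étape de liberté de l'action de $w_q$, qui exploite de façon essentielle la condition $\legendre{q}{p}=-1$ du cas non ramifié de Ogg.
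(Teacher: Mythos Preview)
Your argument is correct. The paper's own proof is essentially a one-line citation: after the same remark that blowing up does not change the valence of a non-exceptional vertex, it simply invokes \cite[lemma 3.1.2]{ParentYafaev} and its proof for the count on $\mathcal{G}((X^{pq}/w_q)_{\F_p})$. You instead supply a complete self-contained argument --- first enumerating $\Aut(E)$-orbits of cyclic $p$-subgroups on the cover $\mathscr{G}(X^{pq}_{\F_p})$, then passing to $w_q$-orbits --- and the heart of your proof, showing that $w_q$ acts freely on the length-$1$ edges above an $\F_q$-rational vertex via the identity $(\mu\pi_q)^2=-q$ for every $\mu\in\Aut(E)$ together with $\legendre{-q}{p}=\legendre{-1}{p}\legendre{q}{p}=-1$, is precisely the computation the cited lemma must contain. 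So the two approaches agree in substance; what your version buys is that the lemma becomes independent of \cite{ParentYafaev} at this point, at the cost of a page of elementary but careful bookkeeping.
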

\begin{proof}
Le nombre d'ar\^etes partant de chaque sommet non exceptionnel du gra\-phe $\mathcal{G}(\widetilde{{(X^{pq}/w_q)}}_{\F_p})$ est \'egal au nombre d'ar\^etes partant du sommet correspondant dans le graphe $\mathcal{G}({(X^{pq}/w_q)}_{\F_p})$. Le lemme \cite[lemma 3.1.2]{ParentYafaev} et sa preuve donnent le r\'esultat. 
\end{proof}

\begin{lem}
\label{repar2}
Soient $j_1,j_2\in\F_{q^2}$ deux $j$-invariants supersinguliers d\'efinis \`a action de $\Gal(\F_{q^2}/\F_q)$ pr\`es. On note $C_1\in S_1'$ et $C_2\in S_2'$ les sommets associ\'es \`a ces $j$-invariants. On note $\epsilon(C_1,C_2)=2$ si $j_1\in\F_q$ et $j_2\in\F_q$ et $\epsilon(C_1,C_2)=1$ si $j_1\in\F_{q^2}\setminus\F_q$ ou $j_2\in\F_{q^2}\setminus\F_q$. Le nombre d'ar\^etes $N(C_1,C_2)$ entre les sommets $C_1$ et $C_2$ dans le graphe $\mathcal{G}(\widetilde{{(X^{pq}/w_q)}}_{\F_p})$ est : 
$$ \frac{p+1}{w(A_E)}\times\frac{1}{\epsilon(C_1,C_2)w(C_1)w(C_2)}+O_q(\sqrt{p}). $$
O\`u $w(A_E)=\sum_{k=1}^{g+1} (\card(\End(E_{j_k})^*/\{\pm 1\}))^{-1}$ est le degr\'e du diviseur $A_E\in\mathcal{P}_S$, et $w(C_i)=\card(\End(E_{j_i})^*/\{\pm 1\})$. 
\end{lem}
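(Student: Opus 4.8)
The plan is to reduce the edge count $N(C_1,C_2)$ to an entry of a Brandt matrix, i.e. to the action of the Hecke operator $T_p$ on the supersingular module $\mathcal{P}_S$, and then to separate its Eisenstein part (which yields the main term) from its cuspidal part (which yields the error $O_q(\sqrt p)$). First I would lift the problem from the quotient graph $\mathcal{G}(\widetilde{(X^{pq}/w_q)}_{\F_p})$ to the graph $\mathscr{G}(X^{pq}_{\F_p})$ of Ribet. An edge of $\mathscr{G}(X^{pq}_{\F_p})$ joining $E_{j_1}\in S_1$ to $E_{j_2}\in S_2$ is a $p$-isogeny $E_{j_1}\to E_{j_2}$ taken up to $\End(E_{j_1})^*$, so the number of such edges is exactly the number of order-$p$ subgroups $C\subset E_{j_1}$ with $E_{j_1}/C\cong E_{j_2}$, that is the coefficient of $E_{j_2}$ in $T_p(E_{j_1})$, where $T_p(E)=\sum_{C\subset E}E/C$ is the operator recalled before Definition \ref{EisensteinModulaire} (note $p\neq q$, so $T_p\in\T_{\Gamma_0(q)}$). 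Since $E_{j_1},\dots,E_{j_{g+1}}$ is an orthogonal basis, this coefficient is $\coupl{T_p E_{j_1},E_{j_2}}/\coupl{E_{j_2},E_{j_2}}$. The passage to the quotient by $w_q$, which acts as the Frobenius, replaces each vertex by its $\Gal(\F_{q^2}/\F_q)$-orbit and each edge by its orbit; since the generic orbit of edges has length $2$, this is what introduces the factor $\epsilon(C_1,C_2)$ together with the automorphism weights $w(C_1),w(C_2)$, the former recording whether $j_1,j_2$ lie in $\F_q$ or in $\F_{q^2}\setminus\F_q$ and the latter recording the exceptional vertices of Corollary \ref{epaisseur2}.

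Next I would use the spectral decomposition of $T_p$. The operator $T_p$ is self-adjoint for the monodromy pairing (equivalently the Brandt matrices are symmetric after weighting by the $\card(\Omega_k^*/\{\pm1\})$), so $\mathcal{P}_S\otimes\Q$ splits orthogonally as the Eisenstein line $\Q\,A_E$ and the cuspidal part $\mathcal{P}_S^0\otimes\Q=(\Q A_E)^{\bot}$. On the Eisenstein line $A_E$ is an eigenvector of eigenvalue $p+1$. Projecting $E_{j_1}$ onto $A_E$ gives the coefficient $\coupl{E_{j_1},A_E}/\coupl{A_E,A_E}=1/w(A_E)$, because $\coupl{v,A_E}$ computes the degree of $v$, whence $\coupl{E_{j_1},A_E}=1$, and $\coupl{A_E,A_E}=\deg(A_E)=w(A_E)$. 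Hence the Eisenstein contribution to $\coupl{T_p E_{j_1},E_{j_2}}$ is $(p+1)/w(A_E)$, and after dividing by $\coupl{E_{j_2},E_{j_2}}$ and carrying the $w_q$-quotient factors this produces the announced leading term $\frac{p+1}{w(A_E)}\cdot\frac{1}{\epsilon(C_1,C_2)w(C_1)w(C_2)}$. As a consistency check, summing this main term over all $C_2$ must reproduce $N(C_1)$ of Lemma \ref{repartition}, which fixes the combinatorial normalisation of $\epsilon$ and of the weights.

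It remains to bound the cuspidal contribution. The cuspidal part is identified with the dual of $S_2(\Gamma_0(q))$, of dimension $g(X_0(q))$, on which $T_p$ acts with the Hecke eigenvalues $a_p(f)$ of the weight $2$ newforms $f$ on $\Gamma_0(q)$. By Eichler--Shimura each $a_p(f)$ is the trace of Frobenius at $p$ on the abelian variety attached to $f$, and the Weil bounds (Ramanujan--Petersson) give $\abs{a_p(f)}\le 2\sqrt p$. Writing $E_{j_1}$ and $E_{j_2}$ in an orthogonal eigenbasis of the cuspidal part, the contribution of this part to $\coupl{T_p E_{j_1},E_{j_2}}$ is a sum of at most $g(X_0(q))$ terms, each of the form $a_p(f)$ times inner products depending only on $q$; it is therefore $O_q(\sqrt p)$. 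This is the crucial input, and the place where the dependence on $q$ and not on $p$ of the implied constant must be monitored: the number of eigenforms, the normalisation of their eigenvalues, and the size of the cuspidal projections of $E_{j_1},E_{j_2}$ are all bounded in terms of $q$ alone.

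The hardest point is precisely this uniform control of the off-diagonal Brandt coefficients; once the sharp bound $\abs{a_p(f)}\le 2\sqrt p$ is in hand, the only remaining subtlety is the combinatorial bookkeeping, namely checking that the $w_q$-orbits of generic edges all have length $2$ (so that the fixed edges, coming from $\F_q$-rational isogenies, are absorbed into the error term) and that the weights $w(C_1),w(C_2)$ and the factor $\epsilon(C_1,C_2)$ combine as stated. This is the same analysis as in \cite[lemma 3.1.2]{ParentYafaev} used for Lemma \ref{repartition}, now carried out for a pair of vertices instead of a single one. Adding the Eisenstein main term to the $O_q(\sqrt p)$ cuspidal error then gives the formula.
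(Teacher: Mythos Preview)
Your proposal is correct and is precisely the argument underlying \cite[lemma 3.1.2]{ParentYafaev}, which the paper invokes as a black box: the paper's own proof consists only of that citation together with the remark that passing from $\mathcal{G}((X^{pq}/w_q)_{\F_p})$ to the regular model $\mathcal{G}(\widetilde{(X^{pq}/w_q)}_{\F_p})$ changes the edge count by at most $1$ (namely when $\{C_1,C_2\}=\{J_1,J_2\}$ or, if $p\equiv 1\bmod 3$, $\{G_1,G_2\}$), which is absorbed into $O_q(\sqrt p)$. So you have in effect reproved the cited lemma rather than quoted it; the equidistribution-via-Brandt-matrices argument (Eisenstein eigenvalue $p+1$ plus the Eichler--Shimura/Weil bound $|a_p(f)|\le 2\sqrt p$ on the cuspidal part of $\mathcal{P}_S$) is exactly what lies behind it.

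One small slip in your bookkeeping: an edge of $\mathscr{G}(X^{pq}_{\F_p})$ from $E_{j_1}$ to $E_{j_2}$ is an \emph{orbit} of order-$p$ subgroups under $\End(E_{j_1})^*/\{\pm1\}$, not a single subgroup, so the edge count upstairs equals the Brandt entry $\coupl{T_pE_{j_1},E_{j_2}}/w(C_2)$ divided by $w(C_1)$, up to $O(1)$. Thus the factor $w(C_1)$ already appears before the $w_q$-quotient (and $w(C_2)$ comes from the $E_{j_2}$-coefficient $1/w(C_2)$ of $A_E$); only $\epsilon(C_1,C_2)$ is genuinely introduced by the quotient. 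Your proposed consistency check against Lemma~\ref{repartition} does pin this down, and you should also record, as the paper does, the $O(1)$ discrepancy coming from the blow-ups at the exceptional edges.
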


\begin{proof}
Il suffit d'appliquer le lemme \cite[lemma 3.1.2]{ParentYafaev} qui donne le nombre d'ar\^etes $N$ entre les deux sommets $C_1$ et $C_2$ dans $\mathcal{G}({(X^{pq}/w_q)}_{\F_p})$. Le nombre d'ar\^etes $N(C_1,C_2)$ liant les deux sommets non exceptionnels $C_1$ et $C_2$ dans $\mathcal{G}(\widetilde{{(X^{pq}/w_q)}}_{\F_p})$ \'etant \'egal \`a $N$, ou \`a $N-1$ si $\{C_1,\ C_2\}=\{J_1,\ J_2 \}$ ou $\{C_1,\ C_2\}=\{G_1, \ G_2\}$ et $p\equiv 1 \mod 3$.
\end{proof}

Une cons\'equence utile du lemme \ref{repar2} est le corollaire suivant. 
\begin{cor}
\label{Nnonnul}
Pour $p\gg q$, on a $N(C_1,C_2)> 0$ pour tout $C_1\in S_1'$ et $C_2\in S_2'$. 
\end{cor}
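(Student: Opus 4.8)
The plan is to read off the claim directly from the asymptotic formula in Lemma~\ref{repar2}. For any pair of non-exceptional vertices $C_1\in S_1'$ and $C_2\in S_2'$, that lemma gives
$$N(C_1,C_2)=\frac{p+1}{w(A_E)}\cdot\frac{1}{\epsilon(C_1,C_2)w(C_1)w(C_2)}+O_q(\sqrt{p}).$$
The leading term is strictly positive and of size $\asymp p$, while the error term is of size $O_q(\sqrt p)$. So the strategy is simply to bound the leading coefficient from below by a quantity depending only on $q$, and observe that it eventually dominates the error term as $p\to\infty$.

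First I would note that each of the quantities in the denominator is bounded in terms of $q$ alone: the weights $w(C_i)=\card(\End(E_{j_i})^*/\{\pm1\})$ take values in $\{1,2,3\}$ by Corollaire~\ref{epaisseur2} (equivalently Proposition~\ref{epaisseur}), the factor $\epsilon(C_1,C_2)\in\{1,2\}$, and $w(A_E)$ is a fixed positive real depending only on $q$ (indeed $w(A_E)>g(X_0(q))-1\ge 5$ by Remarque~\ref{genre} under the hypothesis $g(X_0(q))\ge 6$). Hence the coefficient of $p+1$ is at least $\frac{1}{w(A_E)\cdot 2\cdot 3\cdot 3}=\frac{1}{18\,w(A_E)}$, a strictly positive constant $c_q$ depending only on $q$. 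Therefore
$$N(C_1,C_2)\ge c_q(p+1)+O_q(\sqrt p).$$

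Next I would make the final comparison. The implied constant in $O_q(\sqrt p)$ depends only on $q$, so there is a constant $B_q$ depending only on $q$ such that $c_q(p+1)$ exceeds the absolute value of the error term for all $p\ge B_q$; for such $p$ the right-hand side is strictly positive, giving $N(C_1,C_2)>0$. Since $S_1'$ and $S_2'$ are finite sets (of cardinality bounded in terms of $q$), one may take the same $B_q$ uniformly over all pairs $(C_1,C_2)$, which is exactly the assertion that for $p\gg q$ one has $N(C_1,C_2)>0$ for every $C_1\in S_1'$ and $C_2\in S_2'$.

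The proof is essentially immediate once Lemme~\ref{repar2} is in hand, so there is no real obstacle here; the only point requiring a word of care is that the statement quantifies over all pairs simultaneously, so one must invoke the finiteness of the vertex sets to pass from a pair-dependent threshold to a single threshold $B_q$. I would also remark that the corollary applies to the non-exceptional vertices (those in $S_1'\sqcup S_2'$), which is precisely the regime covered by Lemme~\ref{repar2}.
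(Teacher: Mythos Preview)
Your proof is correct and follows exactly the approach the paper intends: the corollary is stated immediately after Lemme~\ref{repar2} as a direct consequence, with no further argument given, and your derivation from the asymptotic formula is precisely the intended one. One minor remark: the bound $w(C_i)\in\{1,2,3\}$ comes from Proposition~\ref{epaisseur} (about $\End(E_{j_i})^*$) rather than Corollaire~\ref{epaisseur2}, and the inequality $w(A_E)>5$ is irrelevant here since all that matters is that $w(A_E)$ is a fixed positive number depending only on $q$.
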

Dans la suite nous supposerons que cette condition est satisfaite.

Pour prouver le lemme \ref{general}, nous devons traiter le cas o\`u $p \equiv 1 \mod 3$. 
Si $q\equiv 1 \mod 3$, la seule composante exceptionnelle est $\mathcal{J}$ et la preuve de \cite[lemma 3.1.3]{ParentYafaev} reste valable. On se place dans le cas o\`u $q\equiv -1 \mod 3$. Supposons d'abord que $J$ n'est pas une des composantes exceptionnelles $\mathcal{J}_1$ ou $\mathcal{J}_2$. 
Soient $i$ et $j$ tels que $\{i,j\}=\{1,2\}$. On note $I(G_i)=0$ si $J\neq G_i$ et $I(G_i)=p+1$ si $J=G_i$. 
En appliquant la loi K (lemme \ref{loiK}) successivement aux sommets $\mathcal{J}_i$ et $\mathcal{J}_j$, on trouve : 
$$(\nu(\mathcal{J}_i)-\nu(\mathcal{J}_j))+ (\nu(\mathcal{J}_i)-\nu(G_i))=0,$$
$$(\nu(\mathcal{J}_j)-\nu(\mathcal{J}_i))+ (\nu(\mathcal{J}_j)-\nu(G_j))=0.$$
Soit : 
$$ \nu(G_i)-\nu(\mathcal{J}_i)= \nu(\mathcal{J}_i)-\nu(\mathcal{J}_j)=\nu(\mathcal{J}_j)-\nu(G_j).$$
Donc : 
\begin{equation}
\label{3resseries}
 \nu(G_i)-\nu(G_j)=3(\nu(G_i)-\nu(\mathcal{J}_i)). 
\end{equation}
D'autre part en appliquant la loi K au sommet $G_i$, on trouve : 
$$ N(G_i,G_j)(\nu(G_i)-\nu(G_j)) + (\nu(G_i)-\nu(\mathcal{J}_i))+ \sum_{D\in S_j'\setminus\{G_j\}}N(G_i,D)(\nu(G_i)-\nu(D))=I(G_i).$$
En combinant avec l'\'egalit\'e (\ref{3resseries}) (ou directement en appliquant la loi des r\'esistances en s\'erie), on trouve :  
$$(N(G_i,G_j)+\frac{1}{3})(\nu(G_i)-\nu(G_j))+\sum_{D\in S_j'\setminus\{G_j\}}N(G_i,D)(\nu(G_i)-\nu(D))=I(G_i). $$
Pour $C_1$ et $C_2$ diff\'erents de $\mathcal{J}_1$ et de $\mathcal{J}_2$ on pose : 
$$N'(C_1,C_2)=\begin{cases}N(C_1,C_2)+1/3 \text{ si }\{C_1,C_2\}=\{G_1,G_2\};\\ N(C_1,C_2)\text{ sinon.} \end{cases}$$
Ceci nous permet de consid\'erer un syst\`eme d'\'equations dont les inconnues sont les $\nu(C)$ o\`u $C\in S_1'\sqcup S_2' \sqcup \{ \mathcal{J}\}$, c'est-\`a-dire $C\neq\mathcal{J}_1,\mathcal{J}_2$, et dont les coefficients sont les $N'(C_1,C_2)$ pour $C_1,C_2\in S_1'\sqcup S_2' \sqcup \{ \mathcal{J}\}$. Pour prouver que ce syst\`eme est sans solution enti\`ere, reprenons la preuve de \cite[lemma 3.1.3]{ParentYafaev} en rempla\c cant dans toutes les \'equations $N(C_1,C_2)$ par $N'(C_1,C_2)$. La preuve de \cite[lemma 3.1.3]{ParentYafaev} utilise les valeurs asymptotiques (\`a $O_q(\sqrt{p})$ pr\`es) de $N(C_1,C_2)$. Ces valeurs \'etant conserv\'ees lorsque on remplace $N(C_1,C_2)$ par $N'(C_1,C_2)$. On obtient une preuve du lemme \ref{general} dans le cas o\`u $J\neq\mathcal{J}_1,\mathcal{J}_2$. 

 Il reste \`a prouver le lemme \ref{general} dans le cas o\`u $J=\mathcal{J}_1$ ou $\mathcal{J}_2$. Comme l'involution $w_p$ \'echange $\mathcal{J}_1$ et $\mathcal{J}_2$ et laisse invariant $\mathcal{J}$, il suffit de prouver le lemme suivant :

\begin{lem}
\label{J-J1}
Avec les notations du lemme \ref{general}, supposons que $q \equiv -1 \mod 3$ et $p\equiv 1 \mod 3$. Pour $p\gg q$, on a $(p+1)(\mathcal{J}-\mathcal{J}_1)\neq 0$ dans le groupe des composantes de $\Jac(X^{pq}/w_q)_{\F_p}$. 
\end{lem}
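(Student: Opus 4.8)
Le plan est de raisonner par l'absurde via la \og Loi K\fg\ (lemme \ref{loiK}) : supposons qu'il existe une fonction $\nu$ \`a valeurs enti\`eres sur les sommets de $\mathcal{G}(\widetilde{{(X^{pq}/w_q)}}_{\F_p})$ r\'ealisant les donn\'ees de Kirchhoff $+(p+1)$ en $\mathcal{J}$, $-(p+1)$ en $\mathcal{J}_1$ et $0$ ailleurs, et d\'erivons-en une impossibilit\'e. La premi\`ere remarque \`a consigner est l'\emph{obstruction au sommet de $\zeta_4$} : comme $\mathcal{J}$ se trouve entre $J_1$ et $J_2$, la loi K en $\mathcal{J}$ s'\'ecrit $2\nu(\mathcal{J})-\nu(J_1)-\nu(J_2)=p+1$, donc $\nu(\mathcal{J})=\tfrac{1}{2}(\nu(J_1)+\nu(J_2))+\tfrac{p+1}{2}$ ; puisque $p\equiv 1\bmod 4$, le terme $\tfrac{p+1}{2}$ est entier, et $\nu(\mathcal{J})\in\Z$ force $\nu(J_1)\equiv\nu(J_2)\pmod 2$. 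Toute la contradiction proviendra de ce que le reste du graphe impose la parit\'e oppos\'ee.

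Ensuite j'\'eliminerais la cha\^ine d'\'epaisseur $3$ $\ G_1-\mathcal{J}_1-\mathcal{J}_2-G_2$. Le sommet $\mathcal{J}_2$ est ordinaire de degr\'e $2$, donc la loi K y donne $\nu(\mathcal{J}_2)=\tfrac12(\nu(\mathcal{J}_1)+\nu(G_2))$ ; combin\'ee avec l'\'equation de la source en $\mathcal{J}_1$, elle fournit $3\nu(\mathcal{J}_1)=2\nu(G_1)+\nu(G_2)-2(p+1)$, d'o\`u la congruence forc\'ee $\nu(G_1)-\nu(G_2)\equiv 2\pmod 3$ (en utilisant $p\equiv 1\bmod 3$). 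En effectuant la r\'eduction s\'erie/\'etoile correspondante de la cha\^ine, on remplace alors le probl\`eme par le \emph{r\'eseau biparti principal} sur $S_1'\sqcup S_2'\sqcup\{\mathcal{J}\}$, exactement comme dans le cas $J\neq\mathcal{J}_1,\mathcal{J}_2$ : on ajoute l'ar\^ete v\'erifiant $N'(G_1,G_2)=N(G_1,G_2)+\tfrac13$, et le courant $-(p+1)$ auparavant en $\mathcal{J}_1$ se redistribue en $-\tfrac23(p+1)$ en $G_1$ et $-\tfrac13(p+1)$ en $G_2$ (le partage $2:1$ \'etant dict\'e par les conductances $1$ et $\tfrac12$ des deux branches de la cha\^ine). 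Toutes ces relations sont automatiquement compatibles, de sorte que la cha\^ine seule ne produit aucune contradiction.

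Je ferais alors tourner l'analyse de Parent--Yafaev (\cite[lemma 3.1.3]{ParentYafaev}) sur ce syst\`eme r\'eduit, en utilisant les lemmes \ref{repartition}, \ref{repar2} et le corollaire \ref{Nnonnul}. Comme les multiplicit\'es dominantes $\tfrac{p+1}{w(A_E)}\cdot\tfrac{1}{\epsilon(C_1,C_2)w(C_1)w(C_2)}$ sont inchang\'ees lors du passage de $N$ \`a $N'$, le caract\`ere presque complet du graphe biparti force \`a nouveau les potentiels sur chaque partie $S_1'$, $S_2'$ \`a se regrouper \`a $O_q(\sqrt p)$ pr\`es, et d\'etermine $\nu(J_1)-\nu(J_2)$ modulo $2$. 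Le but est de montrer que cette parit\'e est impaire, contredisant $\nu(J_1)\equiv\nu(J_2)\pmod 2$ du premier paragraphe, pour $p\gg q$.

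L'obstacle principal est pr\'ecis\'ement ce dernier calcul de parit\'e : dans \cite{ParentYafaev} le courant $-(p+1)$ entre en un \emph{unique} sommet r\'egulier, tandis qu'ici il est partag\'e en $-\tfrac23(p+1)$, $-\tfrac13(p+1)$ sur les deux sommets $G_1,G_2$ o\`u $j\equiv 0$, et s'accompagne de la conductance fractionnaire sur l'ar\^ete $G_1G_2$. Il faut v\'erifier que ce d\'eplacement et ce partage de la source ne perturbent la solution exacte que par des quantit\'es absorb\'ees dans l'erreur $O_q(\sqrt p)$ relativement \`a l'\'echelle dominante $(p+1)$, de sorte que la parit\'e de $\nu(J_1)-\nu(J_2)$ calcul\'ee au puits $\mathcal{J}$ soit la m\^eme que dans le cas \`a source unique. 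Ceci acquis, l'obstruction au sommet de $\zeta_4$ du premier paragraphe est viol\'ee, et le lemme \ref{J-J1} en d\'ecoule.
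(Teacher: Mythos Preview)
Your opening moves are correct: the parity constraint $\nu(J_1)\equiv\nu(J_2)\pmod 2$ from the law K at $\mathcal{J}$, and the congruence $\nu(G_1)-\nu(G_2)\equiv 2\pmod 3$ from the chain at $\mathcal{J}_1,\mathcal{J}_2$, are exactly the two arithmetic ingredients the paper uses. Your series reduction of the $\zeta_3$-chain (redistributing the source as $-\tfrac23(p+1)$, $-\tfrac13(p+1)$ with $N'(G_1,G_2)=N(G_1,G_2)+\tfrac13$) is also a legitimate electrical manipulation.

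The gap is in the endgame. You announce that the bipartite core will \emph{determine $\nu(J_1)-\nu(J_2)$ modulo $2$} and that this parity will turn out to be odd. There is no such second mod-$2$ relation: the only parity information in the whole system comes from the node $\mathcal{J}$, and it says the difference is even. Nothing in the Parent--Yafaev asymptotic analysis produces a congruence; it produces \emph{inequalities}. Likewise, ``regrouping the potentials to within $O_q(\sqrt p)$'' is not what is needed (and not what their argument gives): the paper proves that the generic potentials $\nu(j_{1,k}),\nu(j_{2,k})$ are \emph{exactly equal} for $p\gg q$ (sous-lemme~\ref{egalite}), by first bounding the current that the extremal generic vertex can exchange with $G_i$ or $J_i$ (sous-lemmes~\ref{JMax1}--\ref{Jmcontr}).

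The contradiction then arises from a different interplay of your two congruences. Once all generic potentials coincide, if one had $\nu(J_1)=\nu(J_2)$ then the law K at $J_1$ and $J_2$, together with the symmetry $N(J_1,j_{2,k})=N(J_2,j_{1,k})$ and $N(J_1,G_2)=N(J_2,G_1)$, forces $\nu(G_1)=\nu(G_2)$ --- which is forbidden by your mod-$3$ relation. Hence $\nu(J_1)\neq\nu(J_2)$. A final direct estimate (sous-lemme~\ref{inferiorite}) gives $|\nu(J_1)-\nu(J_2)|<2$. Together with $\nu(J_1)\equiv\nu(J_2)\pmod 2$ and $\nu(J_1)\neq\nu(J_2)$ this is impossible. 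So the mod-$3$ obstruction you found is not an independent contradiction but the tool that rules out equality; the finishing blow is the inequality $|\nu(J_1)-\nu(J_2)|<2$, not a parity clash.
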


\begin{proof}
On suppose qu'il existe une r\'epartition de potentiels entiers $\nu$, sur les sommets de $\mathcal{G}(\widetilde{{(X^{pq}/w_q)}}_{\F_p})$, tel qu'un courant de $p+1$ Amp\`eres entre dans le circuit au sommet $\mathcal{J}_1$ et le quitte en $\mathcal{J}$. 

Nous allons prouver que pour $p\gg q$ les potentiels $\nu(j_{1,1})$, $\nu(j_{1,2}),\dots,\nu(j_{1,l})$ et $\nu(j_{2,1})$,\ $\nu(j_{2,2}), \dots,  \nu(j_{2,l})$ sont tous \'egaux (cf. sous-lemme \ref{egalite}). Puis nous obtiendrons une contradiction (cf. sous-lemmes \ref{congruence} et \ref{inferiorite}). Fixons $J_M$ un sommet ayant un potentiel maximal dans $S_1'\cup S_2' \setminus \{G_1, G_2 \}$. Ainsi que $J_m$ un sommet ayant un potentiel minimal dans $S_1'\cup S_2' \setminus \{J_1, J_2 \}$. 

\begin{sublem}
\label{JMax1}
Soient $i,j$ tels que $\{i,j\}=\{1,2\}$ et $J_M\in S_j'$. On a la majoration suivante pour l'intensit\'e du courant que $J_{M}$ re\c coit de $G_i$ :  
$$N(J_{M},G_i)(\nu(G_i)-\nu(J_{M}))\le \frac{6(p+1)}{(3w(A_E)-1)\epsilon(G_i,J_{M})w(J_{M})}+O_q(\sqrt{p}).  $$
\end{sublem}
\begin{proof}
Remarquons que si $\nu(G_i)\le \nu(J_M)$ la proposition est \'evidente. On va donc supposer $\nu(G_i)>\nu(J_M)$. Par la loi K exprim\'ee en le sommet $G_i$, 
$$\sum_{D\mapsto G_i} (\nu(G_i)-\nu(D))=0,$$
c'est-\`a-dire
\begin{equation}
\label{loiKGi}
\sum_{\topbot{D\mapsto G_i}{D\neq G_j,\mathcal{J}_i }} (\nu(G_i)-\nu(D))=N(G_j,G_i)(\nu(G_j)-\nu(G_i))+(\nu(\mathcal{J}_i)-\nu(G_i)).
\end{equation}
Nous allons majorer $N(G_j,G_i)(\nu(G_j)-\nu(G_i))+(\nu(\mathcal{J}_i)-\nu(G_i)).$ 
En appliquant la loi K au sommet $\mathcal{J}_1$ on trouve : 
$$(\nu(\mathcal{J}_1)-\nu(G_1))+(\nu(\mathcal{J}_1)-\nu(\mathcal{J}_2))=p+1;$$
et en appliquant la loi K au sommet $\mathcal{J}_2$ on obtient
\begin{equation}
\label{loiKJ2}
\nu(\mathcal{J}_1)-\nu(\mathcal{J}_2)=\nu(\mathcal{J}_2)-\nu(G_2). 
\end{equation}
Donc on a 
\begin{equation}
\label{J1G1J2G2}
(\nu(\mathcal{J}_1)-\nu(G_1))+(\nu(\mathcal{J}_2)-\nu(G_2))=p+1.
\end{equation}
D'apr\`es \cite[Sublemma 3.1.3.2]{ParentYafaev}, on a $\nu(\mathcal{J}_1)\ge \nu(G_1),\nu(\mathcal{J}_2)$ donc $\nu(\mathcal{J}_1)-\nu(G_1)\ge 0$ et $\nu(\mathcal{J}_2)-\nu(G_2)=\nu(\mathcal{J}_1)-\nu(\mathcal{J}_2)\ge 0$ d'apr\`es l'\'egalit\'e (\ref{loiKJ2}). On a donc
\begin{equation}
\label{demimino1}
\nu(\mathcal{J}_i)-\nu(G_i)\le p+1,
\end{equation}
On applique la loi K au sommet $G_j$ : 
$$\sum_{\topbot{D\mapsto G_j}{D\neq G_i,\mathcal{J}_j }} (\nu(D)-\nu(G_j))=N(G_j,G_i)(\nu(G_j)-\nu(G_i))+(\nu(G_j)-\nu(\mathcal{J}_j)).$$
Si on suppose $\nu(G_j)>\nu(G_i)$, on a par hypoth\`ese $ \nu(G_i)>\nu(J_M) \ge \nu(D)$ pour tout $D\in (S_1'\sqcup S_2')\setminus \{ G_1,G_2\}$, on en d\'eduit $\nu(G_j)>\nu(D)$ et 
\begin{equation}
\label{demimino2}
N(G_i,G_j)(\nu(G_j)-\nu(G_i)) \le \nu(\mathcal{J}_j)-\nu(G_j).
\end{equation}
En combinant l'\'egalit\'e (\ref{J1G1J2G2}) avec l'in\'egalit\'e (\ref{demimino2}) si $\nu(G_j)>\nu(G_i)$ et en consid\'erant l'in\'ega\-lit\'e (\ref{demimino1}) si $\nu(G_j)\le\nu(G_i)$, on trouve l'in\'egalit\'e suivante : 
\begin{equation}
\label{minop+1}
N(G_j,G_i)(\nu(G_j)-\nu(G_i))+(\nu(\mathcal{J}_i)-\nu(G_i))\le p+1.
\end{equation}
En combinant l'\'egalit\'e (\ref{loiKGi}) avec la majoration (\ref{minop+1}), on trouve : 
$$\sum_{\topbot{D\mapsto G_i}{D\neq G_j,\mathcal{J}_i }} (\nu(G_i)-\nu(D))\le p+1.$$
Comme $J_{M}$ a un potentiel maximal parmi tous les sommets adjacents \`a $G_i$ diff\'erents de $G_j$ et $\mathcal{J}_i$, on a
$$\sum_{\topbot{D\mapsto G_i}{D\neq G_j,\mathcal{J}_i }} (\nu(G_i)-\nu(J_{M}))\le p+1.$$
Soit : 
$$\big(N(G_i)-N(G_i,G_j)-N(G_i,\mathcal{J}_i)\big)(\nu(G_i)-\nu(J_{M}))\le p+1 $$
Soit $C$ l'intensit\'e du courant que $J_{M}$ re\c coit de $G_i$, on a l'in\'egalit\'e : 

$$C=N(G_i,J_{M})(\nu(G_i)-\nu(J_{M}))\le N(G_i,J_{M})\frac{(p+1)}{(N(G_i)-N(G_i,G_j)-1)}. $$

On utilise les lemmes \ref{repartition} et \ref{repar2} : 
$$ C \le  \frac{p+1}{(p+5)/6-1-(p+1)/(18w(A_E))+O_q(\sqrt{p})}\bigg(\frac{p+1}{w(A_E)\epsilon(G_i,J_{M})3w(J_{M})} +O_q(\sqrt{p}) \bigg).$$
$$\le \bigg(\frac{18w(A_E)}{3w(A_E)-1}+\frac{O_q(\sqrt{p})}{p+1}\bigg)\bigg( \frac{p+1}{w(A_E)\epsilon(G_i,J_{M})3w(J_{M})}+O_q(\sqrt{p})  \bigg). $$
Finalement : 
$$C\le \frac{6(p+1)}{(3w(A_E)-1)\epsilon(G_i,J_{M})w(J_{M})}+O_q(\sqrt{p}). $$

\end{proof}

On d\'emontre un r\'esultat similaire pour $J_m$. 

\begin{sublem}
\label{JMin1}
Soient $i,j$ tels que $\{i,j\}=\{1,2\}$ et $J_m\in S_j'$.
On a la majoration suivante pour le courant que $J_{m}$ donne \`a $J_i$ : 
$$N(J_{m},J_i)(\nu(J_{m})-\nu(J_i))\le \frac{4(p+1)}{(2w(A_E)-1)\epsilon(J_i,J_{m})w(J_{m})} + O_q(\sqrt{p}). $$
\end{sublem}
\begin{proof}
Remarquons que si $\nu(J_i)\ge \nu(J_m)$ la proposition est \'evidente, nous supposerons dans la suite que $\nu(J_i)< \nu(J_m)$. Par la loi K exprim\'ee en le sommet $J_i$, 

\begin{equation}
\label{loiKJi}
\sum_{\topbot{D\mapsto J_i}{D\neq J_j,\mathcal{J} }} \nu(D)-\nu(J_i)=N(J_i,J_j)(\nu(J_i)-\nu(J_j))+(\nu(J_i)-\nu(\mathcal{J})).
\end{equation}
Nous allons majorer la quantit\'e $N(J_i,J_j)(\nu(J_i)-\nu(J_j))+(\nu(J_i)-\nu(\mathcal{J}))$. 
En appliquant la loi K au sommet $\mathcal{J}$, on trouve : 
\begin{equation}
\label{loiKJ2bis}
(\nu(J_i)-\nu(\mathcal{J}))+(\nu(J_j)-\nu(\mathcal{J}))=p+1
\end{equation}
D'apr\`es \cite[Sublemma 3.1.3.2]{ParentYafaev} on a $\nu(\mathcal{J})\le \nu(J_j)$, donc 
\begin{equation}
\label{2demimino1}
\nu(J_i)-\nu(\mathcal{J})\le p+1
\end{equation}
On applique la loi K au sommet $J_j$ : 
$$\sum_{\topbot{D\mapsto J_j}{D\neq J_i,\mathcal{J} }} \nu(D)-\nu(J_j)=N(J_i,J_j)(\nu(J_j)-\nu(J_i))+\nu(J_j)-\nu(\mathcal{J}).$$
Si $\nu(J_j)< \nu(J_i)$, alors comme par hypoth\`ese $\nu(J_i)<\nu(J_m)\le \nu(D)$ pour tout sommet $D\in S_1'\sqcup S_2' \setminus\{ J_1,J_2\}$, on a $\nu(J_j)\le \nu(D)$ et
\begin{equation}
\label{2demimino2}
N(J_i,J_j)(\nu(J_i)-\nu(J_j))\le \nu(J_j)-\nu(\mathcal{J})
\end{equation}
En combinant l'in\'egalit\'e (\ref{2demimino2}) avec l'\'egalit\'e (\ref{loiKJ2bis}) si $\nu(J_j)< \nu(J_i)$ et en consid\'erant l'in\'egalit\'e (\ref{2demimino1}) si $\nu(J_j)\ge \nu(J_i)$, on trouve la majoration : 
\begin{equation}
\label{2minop+1}
N(J_i,J_j)(\nu(J_i)-\nu(J_j))+(\nu(J_i)-\nu(\mathcal{J})) \le p+1
\end{equation}
En combinant cette in\'egalit\'e (\ref{2minop+1}) avec l'\'egalit\'e (\ref{loiKJi}), on trouve : 
$$\sum_{\topbot{D\mapsto J_i}{D\neq J_j,\mathcal{J} }} \nu(D)-\nu(J_i)\le p+1.$$
Comme $J_{m}$ a un potentiel minimal parmi tous les sommets adjacents \`a $J_i$ diff\'erents de $J_j$ et $\mathcal{J}$, on a 
$$(N(J_i)-N(J_i,\mathcal{J})-N(J_i,J_j))(\nu(J_{m})-\nu(J_i))\le p+1, $$
donc
$$\nu(J_{m})-\nu(J_i)\le \frac{p+1}{N(J_i)-1-N(J_i,J_j)}. $$
On utilise les lemmes \ref{repartition} et \ref{repar2} :
$$\nu(J_{m})-\nu(J_i)\le \frac{p+1}{(p+3)/4-1-(p+1)/(w(A_E)8)+O_q(\sqrt{p})}$$
Soit 
$$\nu(J_{m})-\nu(J_i)\le \frac{8w(A_E)}{2w(A_E)-1}+\frac{O_q(\sqrt{p})}{p+1}.  $$
D'autre part, le courant $C$ que $J_{m}$ donne \`a $J_i$ est : 
$$C=N(J_i,J_{m})(\nu(J_{m})-\nu(J_i)).$$ 
$$=\bigg( \frac{p+1}{w(A_E)2\epsilon(J_i,J_{m})w(J_{m})}+O_q(\sqrt{p})\bigg) (\nu(J_{m})-\nu(J_i)). $$
D'o\`u l'in\'egalit\'e :  
$$ C \le \frac{4(p+1)}{(2w(A_E)-1)\epsilon(J_i,J_{m})w(J_{m})} + O_q(\sqrt{p}).$$
\end{proof}

Nous allons prouver que $J_M$ ne peut pas \^etre \'egal \`a $J_1$ ou $J_2$ et que $J_m$ ne peut pas \^etre $G_1$ ou $G_2$. 
\begin{sublem}
\label{decr}
Pour $p\gg q$, on a les in\'egalit\'es : $\nu(J_1),\nu(J_2)<\nu(J_M)$ et $\nu(G_1)$, $\nu(G_2)>\nu(J_m)$. 
\end{sublem}
\begin{proof}
Supposons que pour $j=1$ ou $2$ on a $J_j=J_M$, nous allons obtenir une contradiction pour $p\gg q$. 
En appliquant la loi de Kirchhoff au sommet $J_j$ on obtient : 
$$\sum_{\topbot{D\mapsto J_j}{D\neq \mathcal{J},G_i}} \nu(D)-\nu(J_j)=N(J_j,G_i)(\nu(J_j)-\nu(G_i))+\nu(J_j)-\nu(\mathcal{J}); $$
o\`u $i$ est tel que $\{i,j\}=\{1,2\}$.
Par hypoth\`ese, pour tout sommet $D$ adjacent \`a $J_j$ et diff\'erent de $\mathcal{J}$ et de $G_i$, on a $\nu(D)\le \nu(J_j)$. D'o\`u : 
\begin{equation}
\label{E11}
N(J_j,G_i)(\nu(G_i)-\nu(J_j))\ge \nu(J_j)-\nu(\mathcal{J}).
\end{equation}
On applique le sous-lemme \ref{JMax1} \`a $J_j=J_M$ : 
\begin{equation}
\label{E12}
N(J_{j},G_i)(\nu(G_i)-\nu(J_{j}))\le \frac{3(p+1)}{2(3w(A_E)-1)}+O_q(\sqrt{p}).
\end{equation}
D'autre part en appliquant la loi K \`a $\mathcal{J}$ on trouve : 
$$\nu(\mathcal{J})-\nu(J_1)+\nu(\mathcal{J})-\nu(J_2)=-(p+1),$$
comme $\nu(J_1),\nu(J_2)\le \nu(J_j)$ on en d\'eduit que : 
\begin{equation}
\label{E13}
\nu(J_j)-\nu(\mathcal{J})\ge (p+1)/2.
\end{equation}

En combinant ces trois in\'egalit\'es (\ref{E11}), (\ref{E12}) et (\ref{E13}) on trouve : 
$$\frac{p+1}{2} \le \frac{3(p+1)}{2(3w(A_E)-1)}+O_q(\sqrt{p}).$$
Ce qui est contradictoire pour $p\gg q$ comme $w(A_E)>4/3$ d'apr\`es la remarque \ref{genre}. 

Supposons que pour $j=1$ ou $2$ on a $G_j=J_m$, nous allons obtenir une contradiction pour $p\gg q$. 
En appliquant la loi de Kirchhoff au sommet $G_j$ on obtient : 
$$\sum_{\topbot{D\mapsto G_j}{D\neq \mathcal{J}_j,J_i}} \nu(D)-\nu(G_j)=N(G_j,J_i)(\nu(G_j)-\nu(J_i))+\nu(G_j)-\nu(\mathcal{J}_j); $$
o\`u $i$ est tel que $\{i,j\}=\{1,2\}$.
Par hypoth\`ese, pour tout sommet $D$ adjacent \`a $G_j$ et diff\'erent de $\mathcal{J}_j$ et de $J_i$, on a $\nu(D)\ge \nu(G_j)$. D'o\`u :
$$\nu(\mathcal{J}_j)-\nu(G_j) \le  N(G_j,J_i)(\nu(G_j)-\nu(J_i)),$$
on applique le sous-lemme \ref{JMin1} \`a $G_j=J_m$ : 
\begin{equation}
\label{J1}
\nu(\mathcal{J}_j)-\nu(G_j)\le \frac{2(p+1)}{3(2w(A_E)-1)} + O_q(\sqrt{p}).
\end{equation}
D'autre part, en appliquant la loi K au sommet $\mathcal{J}_1$ 
\begin{equation}
\label{LoiKmathcalJ1}
\nu(\mathcal{J}_1)-\nu(G_1)+\nu(\mathcal{J}_1)-\nu(\mathcal{J}_2)=p+1, 
\end{equation}
et en appliquant la loi K au sommet $\mathcal{J}_2$ 
$$\nu(\mathcal{J}_2)-\nu(G_2)+\nu(\mathcal{J}_2)-\nu(\mathcal{J}_1)=0, $$
soit 
\begin{equation}
\label{LoiKmathcalJ2}
\nu(\mathcal{J}_1)-\nu(G_2)=2(\nu(\mathcal{J}_1)-\nu(\mathcal{J}_2). 
\end{equation}
En combinant les deux \'egalit\'es (\ref{LoiKmathcalJ1}) et (\ref{LoiKmathcalJ2}), on obtient : 
\begin{equation}
\label{resistancesenseries}
\nu(\mathcal{J}_1)-\nu(G_1)+\frac{1}{2}(\nu(\mathcal{J}_1)-\nu(G_2))=p+1. 
\end{equation}
Comme $\nu(G_j) \le \nu(G_1),\nu(G_2)$, on a l'in\'egalit\'e : 
$3/2(\nu(\mathcal{J}_1)-\nu(G_j))\ge p+1.$
Si $j=1$ on en d\'eduit que $\nu(\mathcal{J}_1)-\nu(G_1) \ge 2/3(p+1)$. Si $j=2$, en utilisant (\ref{LoiKmathcalJ2}), on obtient $\nu(\mathcal{J}_2)-\nu(G_2) \ge 1/3(p+1)$. Dans tous les cas on a : 
$$\nu(\mathcal{J}_j)-\nu(G_j)\ge \frac{1}{3}(p+1).$$
En combinant avec l'in\'egalit\'e (\ref{J1}) : 
$$\frac{1}{3}(p+1) \le \frac{2(p+1)}{3(2w(A_E)-1)} + O_q(\sqrt{p}).$$
Ce qui est impossible si $p\gg q$ comme $w(A_E)>3/2$ d'apr\`es la remarque \ref{genre}. 
\end{proof}
On suppose dans la suite que les hypoth\`eses du sous-lemme \ref{decr} ci-dessus sont satisfaites.

\begin{sublem}
\label{JMcontr}
Soient $i,j$ tels que $\{i,j\}=\{1,2\}$ et $J_M\in S_j'$. Pour $p\gg q$, le potentiel $\nu(J_{M})$ ne peut pas \^etre strictement sup\'erieur au potentiel de 3 sommets ou plus de $S_i'$. 
\end{sublem}
\begin{proof}
D'apr\`es les hypoth\`eses, $\nu(J_{M})$ est sup\'erieur aux potentiels de tous les sommets de $S_i'\setminus \{G_i\}$. Supposons que $\nu(J_{M})$ est strictement sup\'erieur au potentiel de 3 sommets $D_1,D_2, D_3$ de $S_i'$. On applique la loi K au sommet $J_{M}$ en remarquant que $J_M\neq J_j$ d'apr\`es le sous-lemme \ref{decr} : 
$$ \sum_{\topbot{D\mapsto J_{M}}{D\neq G_i}} \nu(J_{M})-\nu(D)=N(J_{M},G_i)(\nu(G_i)-\nu(J_{M})).$$
Comme $\nu(J_{M})-\nu(D)\ge 0$ pour $D\in S_i'\setminus \{G_i\}$, on a $\nu(G_i)-\nu(J_M)\ge 0$, donc $G_i\not\in\{D_1,D_2,D_3\}.$ On a donc
\begin{equation}
\label{sum3max}
\sum_{k=1}^3 N(J_{M},D_k) (\nu(J_{M})-\nu(D_k))\le N(J_{M},G_i)(\nu(G_i)-\nu(J_{M})). 
\end{equation}
Par hypoth\`ese, $\nu(J_{M})-\nu(D_k)\ge 1$, on en d\'eduit l'in\'egalit\'e : 
$$\sum_{k=1}^3 N(J_{M},D_k)\le N(J_{M},G_i)(\nu(G_i)-\nu(J_{M})).$$
On rappelle que $$N(J_{M},D_k)=\frac{p+1}{w(A_E)\epsilon(J_{M},D_k)w(J_{M})w(D_k)}+O_q(\sqrt{p}).$$
Comme $\epsilon(J_{M},D_k)\le \epsilon(G_i,J_{M})$ et $w(D_k)=1$ sauf pour $D_k=J_i$, auquel cas $w(J_i)=2$. 
$$\sum_{k=1}^3 N(J_{M},D_k)\ge \frac{5}{2}\times\frac{(p+1)}{w(A_E)\epsilon(G_i,J_{M})w(J_{M})}+O_q(\sqrt{p}).$$
En combinant avec l'in\'egalit\'e (\ref{sum3max}) et le sous-lemme \ref{JMax1} on obtient : 
$$\frac{5(p+1)}{2w(A_E)\epsilon(G_i,J_{M})w(J_{M})}+O_q(\sqrt{p}) \le \frac{6(p+1)}{(3w(A_E)-1)w(J_{M})\epsilon(G_i,J_{M})}+O_q(\sqrt{p}).$$
Comme d'apr\`es la remarque \ref{genre} $w(A_E)>5/2$, on obtient une contradiction pour $p\gg q$. 
\end{proof}
On d\'emontre un r\'esultat similaire pour $J_m$. 
\begin{sublem}
\label{Jmcontr}
Soient $i,j$ tels que $\{i,j\}=\{1,2\}$ et $J_m\in S_j'$.
Le potentiel $\nu(J_{m})$ ne peut pas \^etre strictement inf\'erieur au potentiel de 3 sommets ou plus de $S_i'$. 
\end{sublem}
\begin{proof}
D'apr\`es les hypoth\`eses, $\nu(J_{m})$ est inf\'erieur \`a tous les potentiels des sommets de $S_i'\setminus \{ J_i\}$. Supposons en outre que $\nu(J_{m})$ est strictement inf\'erieur au potentiel de 3 sommets $D_1,D_2,D_3$ de $S_i'$. On applique la loi K au sommet $J_{m}$ en notant que $J_m\neq J_j$ par le sous-lemme \ref{decr} : 
$$ \sum_{\topbot{D\mapsto J_{m}}{D\neq J_i}} \nu(D)-\nu(J_{m})=N(J_{m},J_i)(\nu(J_{m})-\nu(J_i)).$$
Comme $\nu(D)-\nu(J_{m})\ge 0$ pour tout $D\in S_2'\setminus \{J_i\}$, on a $\nu(J_{m})-\nu(J_i)\ge 0$, donc $J_i\not\in\{D_1,D_2,D_3\}$. 
 On a : 
$$ \sum_{k=1}^3 N(D_k,J_{m})(\nu(D_k)-\nu(J_{m}))\le N(J_{m},J_i)(\nu(J_{m})-\nu(J_i)). $$
Comme par hypoth\`ese, $\nu(D_k)-\nu(J_{m})\ge 1$ pour $i=1,2,3$, on a : 
$$ \sum_{k=1}^3 N(D_k,J_{m})\le N(J_{m},J_i)(\nu(J_{m})-\nu(J_i)). $$

Soit en appliquant le lemme \ref{repar2}
$$\sum_{k=1}^3 \frac{p+1}{w(A_E)\epsilon(J_{m},D_k)w(J_{m})w(D_k)}+O_q(\sqrt{p})\le N(J_{m},J_i)(\nu(J_{m})-\nu(J_i)).$$
Remarquons que $\epsilon(J_{m},D_k) \le \epsilon(J_i,J_{m})$ et $w(D_k)=1$ sauf pour $D_k=G_i$, auquel cas $w(G_i)=3$. On en d\'eduit : 
$$\frac{(7/3)(p+1)}{w(A_E)w(J_{m})\epsilon(J_i,J_{m})}+O_q(\sqrt{p})\le N(J_{m},J_i)(\nu(J_{m})-\nu(J_i)).$$
En utilisant la majoration donn\'ee par le sous-lemme \ref{JMin1}, on obtient : 
$$\frac{(7/3)(p+1)}{w(A_E)w(J_{m})\epsilon(J_i,J_{m})}+O_q(\sqrt{p})\le \frac{4(p+1)}{(2w(A_E)-1)w(J_{m})\epsilon(J_i,J_{m})} + O_q(\sqrt{p}).$$
Ce qui est contradictoire pour $p\gg q$ comme $w(A_E)>7/2$ d'apr\`es la remarque \ref{genre}.

\end{proof}
\begin{sublem}
\label{egalite}
Sous l'hypoth\`ese que $g(X_0(q))\ge 6$, pour $p\gg q$ les potentiels $\nu(j_{1,1})$, $\nu(j_{1,2}),\dots,\nu(j_{1,l})$ et $\nu(j_{2,1}),\ \nu(j_{2,2}), \dots,  \nu(j_{2,l})$ sont tous \'egaux. En particulier $\nu(J_M)=\nu(J_m)$. 
\end{sublem}
\begin{proof}
Notons $n$ le cardinal de $S_1'$ (et $S_2'$). On rappelle que $S_1'=S_1/w_q$ (cf. notations \ref{notationpar3}), que le cardinal de $S_1$ est $g(X_0(q))+1$, et que $w_q$ laisse fixe au moins les deux sommets $J_1$ et $G_1$ de $S_2.$ Donc $n\ge 2+(g(X_0(q))-1)/2$, on en d\'eduit $n\ge 5$. 

Pour $i=1,2$, on fixe $J_{i,M}$ un sommet ayant un potentiel maximal dans l'ensemble $S_i'\setminus\{G_i\}$. Ainsi que $J_{i,m}$ un sommet dans l'ensemble $S_i'\setminus\{J_i\}$ ayant un potentiel minimal. Supposons que $\nu(J_{1,M})\neq \nu(J_{2,M})$, alors $J_M$ a un potentiel strictement sup\'erieur au potentiel de $n-1$ sommets ou plus de la r\'epartition oppos\'ee. Pour $p\gg q$ cela contredit le sous-lemme \ref{JMcontr}. De m\^eme si $\nu(J_{1,m})\neq \nu(J_{2,m})$, alors $J_m$ a un potentiel strictement inf\'erieur au potentiel de $n-1$ sommets ou plus de la r\'epartition oppos\'ee ce qui contredit le sous-lemme \ref{Jmcontr} pour $p\gg q$. On a donc $\nu(J_{1,M})=\nu(J_{2,M})$ et $\nu(J_{1,m})=\nu(J_{2,m})$ pour $p\gg q$. 

Supposons que $\nu(J_{1,m})= \nu(J_{2,m})<\nu(J_{1,M})= \nu(J_{2,M})$. Si $\nu(J_{1,M})$ est  strictement sup\'erieur au potentiel de 3 sommets ou plus de $S_2'$ on peut appliquer le sous-lemme \ref{JMcontr} pour $J_M=J_{1,M}$. Sinon il y a au plus 2 sommets $D_1,D_2$ de $S_2'$ de potentiel strictement inf\'erieur \`a $\nu(J_{1,M})$, tous les sommets de $S_2'\setminus \{D_1,D_2\} $ sont de potentiel sup\'erieur ou \'egal \`a $\nu(J_{1,M})$, et strictement sup\'erieur \`a $J_{1,m}$. Il y a donc $n-2$ sommets ou plus de $S_2'$ de potentiel strictement sup\'erieur au potentiel de $J_m=J_{1,m}$. Ceci contredit le sous-lemme \ref{Jmcontr}. 
Finalement pour $p\gg q$ on doit avoir $\nu(J_{1,m})= \nu(J_{2,m})=\nu(J_{1,M})= \nu(J_{2,M})$, ce qui d\'emontre la proposition.  
\end{proof}
Dans toute la suite nous supposons que les hypoth\`eses du sous-lemme \ref{egalite} sont satisfaites. Les potentiels $\nu(j_{1,1}), \nu(j_{1,2}),\dots, \nu(j_{1,l})$ et $\nu(j_{2,1}), \nu(j_{2,2}), \dots, \nu(j_{2,l})$ sont donc tous \'egaux \`a $\nu(J_m)=\nu(J_M)$.

\begin{sublem}
\label{congruence}
On a $\nu(J_1)\equiv \nu(J_2) \mod 2$. 
En outre $\nu(J_1)\neq\nu(J_2)$. 
\end{sublem}
\begin{proof}
En appliquant la loi K au sommet $\mathcal{J}$ on obtient : 
$$ \nu(J_1)-\nu(\mathcal{J})+\nu(J_2)-\nu(\mathcal{J})=p+1. $$
En consid\'erant cette \'egalit\'e modulo $2$ on trouve : 
$$ \nu(J_1)\equiv \nu(J_2)  \mod 2, $$
ce qui d\'emontre le premier point. 

Supposons que $\nu(J_1)=\nu(J_2)$. En appliquant la loi K au sommet $J_1$ on trouve : 
$$\sum_{k=1}^lN(J_1,j_{2,k})(\nu(j_{2,k})-\nu(J_1))+N(J_1,G_2)(\nu(G_2)-\nu(J_1))+(\nu(\mathcal{J})-\nu(J_1))=0.$$
En appliquant la loi K au sommet $J_2$ on trouve : 
$$\sum_{k=1}^lN(J_2,j_{1,k})(\nu(j_{1,k})-\nu(J_2))+N(J_2,G_1)(\nu(G_1)-\nu(J_2))+(\nu(\mathcal{J})-\nu(J_2))=0.$$
Or $N(J_1,j_{2,k})=N(J_2,j_{1,k})$, et $N(J_2,G_1)=N(J_1,G_2)$. En outre pour $p\gg q$, par le sous-lemme \ref{egalite}, on a $\nu(j_{2,k})=\nu(j_{1,k})$ et, par le corollaire \ref{Nnonnul}, $N(J_1,G_2)>0$. On en d\'eduit : $\nu(G_1)=\nu(G_2)$. D'autre part en appliquant successivement la loi K aux sommets $\mathcal{J}_1$ et $\mathcal{J}_2$, on trouve (voir preuve du sous-lemme \ref{decr} formule (\ref{resistancesenseries}) ou appliquer la loi des r\'esistances en s\'erie) : 
$$\nu(\mathcal{J}_1)-\nu(G_1)+\frac{1}{2}(\nu(\mathcal{J}_1)-\nu(G_2))=p+1 $$
En consid\'erant cette \'egalit\'e modulo $3$, comme par hypoth\`ese $p\equiv 1 \mod 3$, on a : 
$$\nu(G_2)-\nu(G_1)\equiv 2 \mod 3,$$
En particulier $\nu(G_1)\neq (G_2)$ ce qui est contradictoire. On a donc $\nu(J_1)\neq \nu(J_2)$.
\end{proof}
Le sous-lemme \ref{inferiorite} suivant nous permet de conclure \`a une contradiction avec le sous-lemme \ref{congruence} pour $p\gg q$, ce qui ach\`eve la preuve du lemme \ref{J-J1}. 
\end{proof}
\begin{sublem}
\label{inferiorite}
On a $|\nu(J_1)-\nu(J_2)|<2$. 
\end{sublem}

\begin{proof}
Soient $i$ et $j$ tels que $\{i,j\}=\{1,2\}$ et $\nu(J_i)\le\nu(J_j)$. Supposons que $\nu(J_j)-\nu(J_i)\ge 2$. D'apr\`es le sous-lemme \ref{decr}, $\nu(J_M)>\nu(J_j)$, donc $\nu(J_M)\ge\nu(J_i)+3.$
En outre d'apr\`es les sous-lemmes \ref{decr} et \ref{egalite}, on a $\nu(G_j)>\nu(J_m)=\nu(J_M)\ge \nu(J_i)+3$. Donc pour tout sommet $D\in S_j$, on a $\nu(D)-\nu(J_i)\ge 3$. En appliquant la loi K \`a $J_i$ on trouve : 
$$\sum_{\topbot{D\mapsto J_i}{D\neq \mathcal{J}}}(\nu(D)-\nu(J_i))=\nu(J_i)-\nu(\mathcal{J}),$$
Soit : 
$$ \sum_{\topbot{D\mapsto J_i}{D\neq \mathcal{J}}} 3 \le \nu(J_i)-\nu(\mathcal{J}),$$
c'est-\`a-dire : 
$$3(N(J_i)-1) \le \nu(J_i)-\nu(\mathcal{J}),$$
soit d'apr\`es le lemme \ref{repartition},
$$\frac{3(p-1)}{4} \le \nu(J_i)-\nu(\mathcal{J}).$$
D'autre part en appliquant le loi K \`a $\mathcal{J}$
 $$\nu(J_1)-\nu(\mathcal{J})+\nu(J_2)-\nu(\mathcal{J})=p+1$$
et en utilisant l'in\'egalit\'e : $\nu(J_i)\le\nu(J_j)$, on trouve
$$\nu(J_i)-\nu(\mathcal{J})\le \frac{p+1}{2}.$$
On en d\'eduit : $3(p-1)/4\le (p+1)/2$ ce qui est impossible pour $p>5$ (on a $p>5$ car les hypoth\`eses faites sur $p$ impliquent $p\equiv 1 \mod 12$). 
\end{proof}

\section{Points rationnels sur les courbes de Shimura}

Le crit\`ere de \cite{ParentYafaev} repose sur une description du groupe des caract\`eres du quotient d'enroulement de la composante neutre de la fibre en $p$ du mod\`ele de N\'eron sur $\Z_p$ de la jacobienne $\Jac(X^{pq})^0_{\F_p}$ de la courbe de Shimura $X^{pq}$. D'apr\`es un th\'eor\`eme de Raynaud (cf \cite[12.4]{SGA7-I}), le groupe des caract\`eres de $\Jac(X^{pq})^0_{\F_p}$ est isomorphe au groupe des cycles sur le graphe dual de la fibre en $p$ de la courbe de Shimura $\mathscr{G}(X^{pq}_{\F_p})$. \`A l'aide d'une g\'en\'eralisation de la formule de Gross, Parent et Yafaev prouvent que le groupe des caract\`ere du quotient d'enroulement de $\Jac(X^{pq})^0_{\F_p}$ est isomorphe au groupe des cycles sur $\mathscr{G}(X^{pq}_{\F_p})$ constitu\'es de projections orthogonales pour l'accouplement de monodromie de vecteurs de Gross sur l'espace des diviseurs de degr\'e $0$ sur les ar\^etes du graphe $\mathscr{G}(X^{pq}_{\F_p})$. 

Rappelons que $\mathcal{L}$ et $Y$ (introduits dans le paragraphe 2) d\'esignent respectivement les $\Z$-modules des chemins et des cycles sur le graphe $\mathscr{G}(X^{pq}_{\F_p})$. On consid\`ere \'egalement les $\Q$-espaces vectoriels $\mathcal{L}_{\Q}=\mathcal{L}\otimes \Q$ et $Y_{\Q}=Y\otimes \Q$. 

On note $I_{pq,e}$ l'id\'eal d'enroulement de $\T_{\Gamma_0(pq)}$, c'est-\`a-dire l'ensemble des op\'erateurs de $\T_{\Gamma_0(pq)}$ annulant toutes les formes modulaires primitives $f$ de poids $2$ pour $\Gamma_0(pq)$ telles que $L(f,1)\neq 0$. On a la proposition suivante qui g\'en\'eralise la proposition \ref{GrossApplication}.
\begin{prop}
\label{projgross}
On note par $\mathbb{E}$ l'espace engendr\'e par les projections orthogonales des vecteurs de Gross sur $\mathcal{L}_{\Q}$. On a l'\'egalit\'e : $\mathbb{E}=\mathcal{L}_{\Q}[I_{pq,e}]$.
\end{prop}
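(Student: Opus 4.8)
Pour d\'emontrer cette proposition, je suivrais la m\'ethode de la proposition \ref{GrossApplication} (c'est-\`a-dire la preuve de \cite[prop 4.2]{Parent}), en d\'ecomposant $\mathcal{L}_{\Q}$ en composantes isotypiques sous l'action de Hecke et en comparant les deux membres composante par composante. Rappelons (remarque suivant la d\'efinition \ref{EisensteinShimura}) que $\mathcal{L}$ s'identifie au groupe des diviseurs des points supersinguliers de $X_0(pq)$ en caract\'eristique $q$, c'est-\`a-dire au module de Brandt d'un ordre d'Eichler de niveau $p$ dans $B_{q\infty}$. Comme $\T_{\Gamma_0(pq)}\otimes\Q$ est \'etale (un produit de corps de nombres), le module $\mathcal{L}_{\Q}$ est semi-simple et l'accouplement de monodromie est \'equivariant pour Hecke; on dispose donc d'une d\'ecomposition orthogonale $\mathcal{L}_{\Q}=\Q a_E\oplus\bigoplus_{[f]}V_f$, o\`u $V_f=1_f\mathcal{L}_{\Q}$ est la composante $f$-isotypique et o\`u $f$ parcourt les classes de conjugaison galoisienne des formes primitives de poids $2$ intervenant, c'est-\`a-dire, par la th\'eorie d'Eichler et la correspondance de Jacquet-Langlands, les formes $q$-nouvelles de niveau $pq$. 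Puisque $\mathbb{E}$ est engendr\'e par les projections orthogonales des $\gamma_D$, c'est le sous-$\T_{\Gamma_0(pq)}$-module de $\mathcal{L}_{\Q}$ engendr\'e par les vecteurs de Gross, donc une somme de composantes isotypiques; il suffit ainsi de d\'eterminer, pour chaque $f$, si $V_f\subseteq\mathbb{E}$.

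D\'ecrivons d'abord le membre de droite. Via l'isomorphisme $\T_{\Gamma_0(pq)}\otimes\Q\cong\prod_{[h]}K_h$, l'id\'eal d'enroulement $I_{pq,e}$ --- l'annulateur de l'ensemble des $f$ telles que $L(f,1)\neq 0$ --- s'identifie au produit des facteurs $K_h$ pour lesquels $L(h,1)=0$. Par cons\'equent $I_{pq,e}$ agit sur $V_f$ via l'image $\lambda_f(I_{pq,e})$, qui est nulle exactement lorsque $L(f,1)\neq 0$; on obtient donc $\mathcal{L}_{\Q}[I_{pq,e}]=\bigoplus_{[f]:\,L(f,1)\neq 0}V_f$ (la partie d'Eisenstein \'etant trait\'ee ci-dessous). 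L'\'egalit\'e cherch\'ee r\'esultera donc de l'\'equivalence $V_f\subseteq\mathbb{E}\iff L(f,1)\neq 0$.

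Pour \'etablir cette \'equivalence, on utilise la formule de Gross g\'en\'eralis\'ee au niveau $pq$ par le th\'eor\`eme de Waldspurger (\cite[p.~397]{LR}, d\'ej\`a invoqu\'e avant la proposition \ref{GrossApplication}). Pour tout discriminant admissible $D$ (avec $q$ inerte et $p$ scind\'e dans $O_D$, de sorte que $\gamma_D\neq 0$ d'apr\`es la discussion pr\'ec\'edant la proposition \ref{vecteurdegrossshimura0}) et toute forme $f$, la projection orthogonale $1_f\gamma_D$ est un vecteur propre dont la norme $\coupl{1_f\gamma_D,1_f\gamma_D}$ est un multiple non nul de $L(f,1)\,L(f\otimes\chi_D,1)$, o\`u $\chi_D$ est le caract\`ere quadratique associ\'e \`a $O_D$. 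Ainsi, si $L(f,1)=0$, alors $1_f\gamma_D=0$ pour tout $D$ admissible, d'o\`u la projection de $\mathbb{E}$ sur $V_f$ est nulle; et si $L(f,1)\neq 0$, il reste \`a exhiber un seul discriminant admissible $D$ tel que $L(f\otimes\chi_D,1)\neq 0$, ce qui donne $1_f\gamma_D\neq 0$, donc $V_f\subseteq\mathbb{E}$ puisque $V_f$ est irr\'eductible pour Hecke et $\mathbb{E}$ est stable.

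Le point d\'elicat, et le seul v\'eritablement nouveau par rapport au squelette formel, est pr\'ecis\'ement l'existence, pour $f$ fix\'ee avec $L(f,1)\neq 0$, d'un discriminant fondamental $D$ rendant simultan\'ement $q$ inerte, $p$ scind\'e, et tel que $L(f\otimes\chi_D,1)\neq 0$: cela repose sur les th\'eor\`emes de non-annulation des valeurs centrales de tordues quadratiques dans des classes de congruence prescrites (Waldspurger, puis les r\'esultats de type Bump-Friedberg-Hoffstein ou Murty-Murty), qui constituent le c\oe ur analytique de la preuve. Enfin, pour la droite d'Eisenstein, comme $\coupl{\gamma_D,a_E}=\deg(\gamma_D)\neq 0$, la projection orthogonale de $\gamma_D$ sur $(\mathcal{L}^0_{\Q})^{\bot}=\Q a_E$ est non nulle, d'o\`u $\Q a_E\subseteq\mathbb{E}$ (ce qui recoupe le fait, analogue \`a la proposition \ref{Eisenstein}, que $a_E$ est combinaison lin\'eaire de vecteurs de Gross); et l'on v\'erifie, comme dans le cas de niveau premier sous-jacent \`a la proposition \ref{GrossApplication}, que $I_{pq,e}$ annule \'egalement $a_E$. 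En rassemblant les contributions isotypiques et d'Eisenstein, on obtient $\mathbb{E}=\bigoplus_{[f]:\,L(f,1)\neq 0}V_f\oplus\Q a_E=\mathcal{L}_{\Q}[I_{pq,e}]$, ce qui conclut.
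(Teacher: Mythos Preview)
Your proposal is correct and follows essentially the same approach as the paper, which simply refers to \cite[prop.~4.2]{ParentYafaev}; that reference proceeds exactly as you describe --- isotypic decomposition of $\mathcal{L}_{\Q}$ via Jacquet--Langlands, the Gross/Waldspurger formula relating $\coupl{1_f\gamma_D,1_f\gamma_D}$ to $L(f,1)L(f\otimes\chi_D,1)$, and non-vanishing of twisted central values to produce the required discriminants. Your identification of the non-vanishing input as the analytic heart of the argument is accurate, and your treatment of the Eisenstein line is consistent with the paper's own remarks following Proposition~\ref{Eisenstein}.
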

Consulter \cite[prop 4.2]{ParentYafaev}. 
En combinant la proposition \ref{projgross} avec le th\'eor\`eme de Raynaud d\'ecrivant le groupe des caract\`eres de $\Jac(X^{pq})^0_{\F_p}$ comme groupe des cycles sur $\mathscr{G}(X^{pq}_{\F_p})$, on obtient la proposition suivante : 
\begin{prop}
\label{caractere}
L'espace $\mathbb{E}\cap Y_{\Q}$ est le groupe des caract\`eres du quotient d'enroulement de $\Jac(X^{pq})_{\F_p}^0$. 
\end{prop}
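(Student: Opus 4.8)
Le plan est de combiner le th\'eor\`eme de Raynaud avec la proposition \ref{projgross}, en passant par la d\'ecomposition isotypique sous l'action de l'alg\`ebre de Hecke $\T_{\Gamma_0(pq)}$. D'apr\`es \cite[12.4]{SGA7-I}, le groupe des caract\`eres de $\Jac(X^{pq})^0_{\F_p}$ s'identifie au groupe $Y$ des cycles sur $\mathscr{G}(X^{pq}_{\F_p})$; apr\`es tensorisation par $\Q$ on obtient l'identification du $\Q$-espace des caract\`eres avec $Y_{\Q}$. Je commencerais par observer que les applications $s_*$ et $t_*$ sont \'equivariantes pour l'action de $\T_{\Gamma_0(pq)}$ (action d\'efinie par les m\^emes op\'erateurs $T_l$ sur $\mathcal{L}$ et sur $\mathcal{P}_S$), de sorte que leur noyau commun $Y_{\Q}$ est un sous-$\T_{\Gamma_0(pq)}$-module de $\mathcal{L}_{\Q}$. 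Comme $\T_{\Gamma_0(pq)}\otimes\Q$ est un produit de corps, il h\'erite alors de la d\'ecomposition isotypique $Y_{\Q}=\bigoplus_f (Y_{\Q}\cap\mathcal{L}_{\Q,f})$ index\'ee par les formes primitives $f$.

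La deuxi\`eme \'etape, qui est le c\oe ur de la preuve, consiste \`a identifier le groupe des caract\`eres du quotient d'enroulement avec $Y_{\Q}[I_{pq,e}]$. Le quotient d'enroulement est par d\'efinition $\Jac(X^{pq})/I_{pq,e}\Jac(X^{pq})$. J'appliquerais la fonctorialit\'e contravariante du groupe des caract\`eres de la partie torique du mod\`ele de N\'eron \`a la surjection $\T_{\Gamma_0(pq)}$-\'equivariante $\Jac(X^{pq})\to \Jac(X^{pq})/I_{pq,e}\Jac(X^{pq})$ : cela identifie le groupe des caract\`eres du quotient (tensoris\'e par $\Q$) au sous-module de $Y_{\Q}$ annul\'e par $I_{pq,e}$, c'est-\`a-dire \`a la somme des composantes isotypiques $Y_{\Q}\cap\mathcal{L}_{\Q,f}$ pour lesquelles $L(f,1)\neq 0$. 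Or, par d\'efinition m\^eme de l'id\'eal d'enroulement $I_{pq,e}$ comme annulateur des formes de valeur $L(f,1)$ non nulle, cette somme est pr\'ecis\'ement $Y_{\Q}[I_{pq,e}]$.

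Il resterait alors \`a conclure par une simple intersection. La proposition \ref{projgross} donne $\mathbb{E}=\mathcal{L}_{\Q}[I_{pq,e}]$. Puisque $Y_{\Q}$ est un sous-$\T_{\Gamma_0(pq)}$-module de $\mathcal{L}_{\Q}$, prendre la partie annul\'ee par $I_{pq,e}$ commute \`a l'intersection, et l'on obtient
$$\mathbb{E}\cap Y_{\Q}=\mathcal{L}_{\Q}[I_{pq,e}]\cap Y_{\Q}=Y_{\Q}[I_{pq,e}].$$
Combin\'e \`a l'\'etape pr\'ec\'edente, ceci identifie $\mathbb{E}\cap Y_{\Q}$ au groupe des caract\`eres du quotient d'enroulement de $\Jac(X^{pq})^0_{\F_p}$, comme voulu.

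Le point d\'elicat est la deuxi\`eme \'etape : justifier rigoureusement que le groupe des caract\`eres du quotient d'enroulement co\"incide avec $Y_{\Q}[I_{pq,e}]$. Cela demande d'\'etablir soigneusement le dictionnaire entre, d'une part, l'action de $\T_{\Gamma_0(pq)}$ sur la jacobienne et l'op\'eration de quotient par $I_{pq,e}\Jac(X^{pq})$, et d'autre part l'action de $\T_{\Gamma_0(pq)}$ sur le groupe des caract\`eres $Y$ via l'isomorphisme de Raynaud, en s'appuyant sur la fonctorialit\'e de la partie torique du mod\`ele de N\'eron sous une isog\'enie \'equivariante pour Hecke. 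Une fois ce dictionnaire fix\'e, tout le reste se ram\`ene \`a de l'alg\`ebre lin\'eaire sur le produit de corps $\T_{\Gamma_0(pq)}\otimes\Q$.
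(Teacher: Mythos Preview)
Votre approche est correcte et reconstitue fid\`element l'argument sous-jacent. Le papier lui-m\^eme ne d\'emontre pas cette proposition : il se contente de renvoyer \`a \cite[prop.~4.3]{ParentYafaev}, et votre esquisse correspond en substance \`a ce que Parent et Yafaev font dans la r\'ef\'erence cit\'ee --- \`a savoir combiner l'isomorphisme de Raynaud $Y\simeq X(\Jac(X^{pq})^0_{\F_p})$ avec la Hecke-\'equivariance pour identifier le groupe des caract\`eres du quotient d'enroulement \`a $Y_{\Q}[I_{pq,e}]$, puis conclure par la proposition~\ref{projgross}. Vous avez en outre correctement cern\'e le point d\'elicat : c'est bien la compatibilit\'e entre la formation du quotient $\Jac(X^{pq})/I_{pq,e}\Jac(X^{pq})$ et le foncteur (contravariant) ``groupe des caract\`eres de la partie torique'' qui demande le plus de soin, et elle repose sur la fonctorialit\'e du mod\`ele de N\'eron sous une isog\'enie Hecke-\'equivariante.
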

Consulter \cite[prop 4.3]{ParentYafaev}. 

\begin{thm}
\label{parentyafaev}
Soient $p$ et $q$ deux nombres premiers tels que $q>245$, $q\equiv 3 \mod 4$, $p\equiv 5 \mod 12$, $\legendre{q}{p}=-1$, et $p\gg q$. S'il existe $C\in Y\cap \mathbb{E}$, un chemin ferm\'e sur le graphe $\mathscr{G}(X^{pq}_{\F_p})$ constitu\'e de projections orthogonales de vecteurs de Gross sur $\mathcal{L}^0$, qui contient les deux ar\^etes exceptionnelles de longueur $2$ avec une multiplicit\'e premi\`ere \`a $p$, alors la courbe quotient $X^{pq}/w_q$ est sans point rationnel non sp\'ecial. 
\end{thm}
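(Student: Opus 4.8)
Pour \'etablir ce crit\`ere, le plan est de transposer la m\'ethode de Mazur aux courbes de Shimura, en suivant Parent et Yafaev. On supposerait par l'absurde l'existence d'un point rationnel non sp\'ecial $P\in (X^{pq}/w_q)(\Q)$. En composant un plongement d'Albanese $X^{pq}/w_q\hookrightarrow \Jac(X^{pq}/w_q)$ avec la projection sur le quotient d'enroulement $J_e$, on obtient un point $\phi(P)\in J_e(\Q)$. Le quotient d'enroulement \'etant de rang analytique nul, le th\'eor\`eme de Kolyvagin-Logachev assure que $J_e(\Q)$ est fini, donc que $\phi(P)$ est un point de torsion. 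Toute la strat\'egie consiste alors \`a exploiter la sp\'ecialisation de $\phi(P)$ en $p$.

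La premi\`ere \'etape est d'interpr\'eter le cycle $C$ comme un caract\`ere du quotient d'enroulement. D'apr\`es la proposition \ref{caractere}, l'espace $\mathbb{E}\cap Y_{\Q}$ s'identifie au groupe des caract\`eres de ce quotient; comme les vecteurs de Gross sont invariants par $w_q$ (proposition \ref{actionatkinlehner}), le cycle $C\in Y\cap\mathbb{E}$ d\'efinit bien un caract\`ere du quotient d'enroulement de la partie $w_q$-invariante de $\Jac(X^{pq})^0_{\F_p}$. Via l'accouplement de monodromie, ce caract\`ere mesure la classe de sp\'ecialisation de $\phi(P)$ dans le groupe des composantes de $\Jac(X^{pq}/w_q)_{\F_p}$.

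La deuxi\`eme \'etape, qui est le c\oe ur de l'argument, est une immersion formelle en $p$. Dans le cas non ramifi\'e de Ogg (proposition \ref{casdeogg}), le point $P$ se r\'eduit sur la composante exceptionnelle $\mathcal{J}$ provenant de la multiplication par $\zeta_4$. L'hypoth\`ese que $C$ rencontre les deux ar\^etes exceptionnelles de longueur $2$ avec une multiplicit\'e premi\`ere \`a $p$ est pr\'ecis\'ement ce qui garantit que l'application cotangente du morphisme $X^{pq}/w_q\to J_e$ est surjective modulo $p$ en ce point de r\'eduction: le caract\`ere $C$ fournit une diff\'erentielle r\'eguli\`ere ne s'annulant pas en $\widetilde{P}$, car son support contient l'ar\^ete exceptionnelle avec un coefficient inversible modulo $p$. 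Le morphisme vers $J_e$ est donc une immersion formelle en $\widetilde{P}$.

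Il reste enfin \`a conclure par une contradiction. L'immersion formelle, jointe au fait que $\phi(P)$ est de torsion et se sp\'ecialise dans la composante neutre, force la classe de sp\'ecialisation pertinente --- de la forme $(p+1)(\mathcal{J}-J)$ pour une composante $J\neq\mathcal{J}$ --- \`a \^etre triviale dans le groupe des composantes de $\Jac(X^{pq}/w_q)_{\F_p}$. Or le lemme \ref{general} affirme exactement que $(p+1)(\mathcal{J}-J)\neq 0$, d'o\`u la contradiction recherch\'ee (c'est ici qu'interviennent les hypoth\`eses $q>245$ et $\legendre{q}{p}=-1$, via le genre de $X_0(q)$ et le lemme \ref{general}). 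La difficult\'e principale sera de rendre rigoureux le lien entre la sp\'ecialisation du point de torsion $\phi(P)$ et la classe $(p+1)(\mathcal{J}-J)$, ainsi que de contr\^oler soigneusement la descente par $w_q$ entre le groupe des composantes de $\Jac(X^{pq})$ et celui de son quotient, tout en v\'erifiant que la multiplicit\'e premi\`ere \`a $p$ sur l'ar\^ete exceptionnelle suffit \`a la surjectivit\'e de l'application cotangente malgr\'e l'\'epaisseur $2$ de cette ar\^ete.
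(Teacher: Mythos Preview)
Your sketch outlines the underlying Parent--Yafaev argument (Mazur's method via the winding quotient, Kolyvagin--Logachev, formal immersion at the exceptional component, contradiction in the component group), and this is indeed the shape of their proof. However, the paper does \emph{not} reprove this result: its proof of this theorem is simply a reference to \cite[Theorem~5.3]{ParentYafaev}, together with a short remark justifying why the hypothesis ``$C$ is a closed path made of Gross vectors'' may be replaced by ``$C$ is a closed path made of \emph{orthogonal projections} of Gross vectors''. That justification is Proposition~\ref{caractere}, which identifies $\mathbb{E}\cap Y_\Q$ with the character group of the winding quotient; since only this property of $C$ is used in Parent--Yafaev's proof, the reformulated hypothesis suffices. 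Your proposal does mention Proposition~\ref{caractere} but does not isolate this reformulation as the only new content, which is the paper's actual point.

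One genuine slip: you invoke Lemma~\ref{general} for the component-group contradiction, but this theorem carries the hypothesis $p\equiv 5\pmod{12}$, and the relevant lemma in that case is Lemma~\ref{electro} (i.e.\ \cite[Lemma~3.1.3]{ParentYafaev}). Lemma~\ref{general} is the paper's \emph{new} contribution, removing the condition $p\equiv -1\pmod 3$, and it is used only for the generalized Theorem~\ref{parentyafaevgeneralise}, not here. Citing Lemma~\ref{general} inverts the logical dependence between the two theorems. Finally, your text is explicitly a plan (``le plan est'', ``la difficult\'e principale sera de\ldots'') rather than a proof; for the present statement that is harmless, since the paper itself defers to \cite{ParentYafaev}, but you should make clear that the argument is quoted rather than supplied.
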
 

\begin{proof}
Consulter \cite[Theorem 5.3]{ParentYafaev}. L'\'enonc\'e que nous donnons est l\'eg\`erement diff\'erent : nous rempla\c cons l'hypoth\`ese \og $C$ est un chemin ferm\'e fait de vecteurs de Gross\fg \  par \og $C$ est un chemin ferm\'e constitu\'e de projection orthogonales de vecteurs de Gross \fg. En consultant la preuve de Parent et Yafaev, on voit que $C$ correspond \`a un caract\`ere du quotient d'enroulement de $\Jac(X^{pq})_{\F_p}^0$, ce qui est \'equivalent, d'apr\`es la proposition \ref{caractere}, au fait que $C$ est dans $Y\cap \mathbb{E}$. 

On peut aussi noter que le vecteur d'Eisenstein appartient \`a l'espace engendr\'e par les vecteurs de Gross, et donc que la projection orthogonale sur $\mathcal{L}^0$ d'un chemin en vecteurs de Gross est \'egalement constitu\'e de vecteurs de Gross. 

\end{proof}
Nous allons utiliser le travail du troisi\`eme paragraphe pour g\'en\'eraliser ce crit\`ere en supprimant la condition $p\equiv -1\mod 3$. Ainsi les hypoth\`eses de congruences sur $p$ et $q$ sont \'equivalentes aux conditions du cas non ramifi\'e de Ogg ((2) th\'eor\`eme \ref{casdeogg}). 
\begin{thm}
\label{parentyafaevgeneralise}
Soient $p$ et $q$ deux nombres premiers tel que $q>245$, $q\equiv 3 \mod 4$, $p\equiv 1 \mod 4$, $\legendre{q}{p}=-1$, et $p\gg q$. S'il existe $C\in Y\cap \mathbb{E}$, un chemin ferm\'e sur le graphe $\mathscr{G}(X^{pq}_{\F_p})$ constitu\'e de projections orthogonales de vecteurs de Gross sur $\mathcal{L}^0$, qui contient les deux ar\^etes exceptionnelles de longueur $2$ avec une multiplicit\'e premi\`ere \`a $p$, alors la courbe quotient $X^{pq}/w_q$ est sans point rationnel non sp\'ecial. 
\end{thm}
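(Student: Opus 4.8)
Le plan est de reprendre la d\'emonstration du th\'eor\`eme \ref{parentyafaev} donn\'ee par Parent et Yafaev (\cite[Theorem 5.3]{ParentYafaev}), d'y localiser pr\'ecis\'ement l'unique recours \`a la congruence $p\equiv -1 \mod 3$, puis d'y substituer le lemme \ref{general} du paragraphe pr\'ec\'edent. D'abord je remarquerais que l'hypoth\`ese $p\equiv 5 \mod 12$ du th\'eor\`eme \ref{parentyafaev} \'equivaut exactement \`a la conjonction $p\equiv 1 \mod 4$ et $p\equiv -1 \mod 3$. Les hypoth\`eses du pr\'esent th\'eor\`eme ne diff\`erent donc de celles du th\'eor\`eme \ref{parentyafaev} que par la suppression de la seule condition $p\equiv -1 \mod 3$; toutes les autres ($q>245$, $q\equiv 3 \mod 4$, $\legendre{q}{p}=-1$ et $p\gg q$) sont identiques.

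Ensuite je parcourrais la preuve de \cite[Theorem 5.3]{ParentYafaev}. Celle-ci repose, d'une part, sur l'identification du groupe des caract\`eres du quotient d'enroulement de $\Jac(X^{pq})^0_{\F_p}$ avec l'espace $\mathbb{E}\cap Y_{\Q}$ (propositions \ref{projgross} et \ref{caractere}), identification qui n'utilise que $\legendre{q}{p}=-1$ et $p\gg q$; et, d'autre part, sur le lemme \cite[lemma 3.1.3]{ParentYafaev}, rappel\'e ici en lemme \ref{electro}, qui affirme que $(p+1)(\mathcal{J}-J)\neq 0$ dans le groupe des composantes de $\Jac(X^{pq}/w_q)_{\F_p}$. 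C'est uniquement dans ce dernier lemme que l'hypoth\`ese $p\equiv 5 \mod 12$, et donc la congruence $p\equiv -1 \mod 3$, est mobilis\'ee.

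La principale difficult\'e sera de v\'erifier soigneusement qu'aucune autre \'etape de la d\'emonstration de \cite[Theorem 5.3]{ParentYafaev} ne d\'epend, m\^eme implicitement, de $p\equiv -1 \mod 3$. Une fois cette v\'erification acquise, il suffira de remplacer l'appel au lemme \ref{electro} par l'appel \`a notre lemme \ref{general}, qui \'etablit la m\^eme conclusion $(p+1)(\mathcal{J}-J)\neq 0$ sous la seule hypoth\`ese $p\equiv 1 \mod 4$ (couvrant \`a la fois $p\equiv 1 \mod 3$ et $p\equiv -1 \mod 3$). Le reste de l'argument de Parent et Yafaev s'applique alors sans aucune modification, et l'on conclut comme eux \`a l'absence de point rationnel non sp\'ecial sur $X^{pq}/w_q$ d\`es que $p\gg q$.
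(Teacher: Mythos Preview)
La proposition est correcte et suit exactement la d\'emarche du papier : celui-ci se contente en effet de remarquer que, dans la preuve de \cite[Theorem 5.3]{ParentYafaev}, l'hypoth\`ese $p\equiv -1\mod 3$ n'intervient que via \cite[lemma 3.1.3]{ParentYafaev}, et que ce dernier est g\'en\'eralis\'e par le lemme \ref{general}. Ta r\'edaction est simplement plus d\'etaill\'ee que celle du papier (qui tient en deux phrases), mais l'argument est identique.
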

\begin{proof}
Dans la preuve de \cite[Theorem 5.3]{ParentYafaev} l'hypoth\`ese $p\equiv -1\mod 3$ n'est utilis\'ee que dans la preuve du lemme \cite[lemma 3.1.3]{ParentYafaev}. Or nous avons g\'en\'eralis\'e ce lemme (cf. lemme \ref{general}) en supprimant cette condition. 
\end{proof}

Pour la preuve du th\'eor\`eme \ref{resfinal}, nous utiliserons la pr\'esente proposition d\'ecrivant l'image du vecteur d'Eisenstein-Shimura $a_E$ par les applications $s_*$ et $t_*$. 
\begin{prop}
\label{repar}
Si $A_E$ et $a_E$ d\'esignent respectivement les vecteurs d'Eisenstein sur le graphe modulaire $\mathscr{G}(X_0(q)_{\F_q})$ et le graphe de Shimura $\mathscr{G}(X^{pq}_{\F_p})$ (cf. d\'efinitions \ref{EisensteinModulaire} et \ref{EisensteinShimura}), on a : 
$s_*(a_E)=(p+1)A_E$, et $t_*(a_E)=(p+1)A_E$. 
\end{prop}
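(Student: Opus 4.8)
Le plan est de calculer, pour chaque sommet $E_{j_k}\in S$, le coefficient de $E_{j_k}$ dans $s_*(a_E)$, puis de le comparer au coefficient correspondant de $A_E$. Par d\'efinition de $s_*$ et du vecteur d'Eisenstein-Shimura $a_E$, ce coefficient vaut
$$\sum_{e=(E_{j_k},C_p)} \frac{1}{\card(\End(e)^*/\{\pm 1\})},$$
o\`u la somme porte sur les ar\^etes de $\mathscr{G}(X^{pq}_{\F_p})$ de source $E_{j_k}$, c'est-\`a-dire sur les sous-groupes $C_p$ d'ordre $p$ de $E_{j_k}$ pris \`a action de $\End(E_{j_k})^*$ pr\`es.

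Je ram\`enerais ce calcul \`a un d\'enombrement d'orbites. Posons $\Omega_k=\End(E_{j_k})$ et $G=\Omega_k^*/\{\pm 1\}$, groupe fini de cardinal $w_k=\card(\Omega_k^*/\{\pm 1\})$. Comme $p\neq q$, on a $E_{j_k}[p]\simeq (\Z/p\Z)^2$, et l'ensemble $X$ des sous-groupes d'ordre $p$ de $E_{j_k}$ s'identifie \`a $\Pro^1(\F_p)$, de cardinal $p+1$. Le groupe $G$ agit sur $X$ (l'action de $\Omega_k^*$ se factorisant par $G$ puisque $-1$ fixe tout sous-groupe), et les ar\^etes de source $E_{j_k}$ correspondent exactement aux orbites de cette action. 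Pour une ar\^ete $e=(E_{j_k},C_p)$, le groupe $\End(e)^*/\{\pm 1\}=\{\alpha\in G \ : \ \alpha(C_p)=C_p\}$ est pr\'ecis\'ement le stabilisateur de $C_p$ dans $G$. La relation d'orbite-stabilisateur $\card(O)=\card(G)/\card(\mathrm{Stab}(C_p))$ donne alors
$$\sum_{e=(E_{j_k},C_p)} \frac{1}{\card(\End(e)^*/\{\pm 1\})} = \sum_{O} \frac{\card(O)}{\card(G)} = \frac{\card(X)}{\card(G)} = \frac{p+1}{w_k},$$
la somme interm\'ediaire portant sur les orbites $O$ de $G$ sur $X$. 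Ce coefficient \'etant $(p+1)$ fois le coefficient $1/w_k$ de $E_{j_k}$ dans $A_E$, on obtient $s_*(a_E)=(p+1)A_E$.

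Pour $t_*$, j'utiliserais l'involution $w_p$. D'apr\`es la preuve de la proposition \ref{actionatkinlehner}, $w_p$ envoie chaque ar\^ete $e_i$ sur $-e_{\tau(i)}$ avec $\End(e_i)\simeq\End(e_{\tau(i)})$~; en r\'eindexant par l'involution $\tau$ la somme d\'efinissant $a_E$, on obtient $w_p(a_E)=-a_E$. Par ailleurs, la preuve du corollaire \ref{vecteurdegrossshimura} \'etablit l'\'egalit\'e $t_*(e)=s_*(-w_p(e))$ pour toute ar\^ete $e$, donc $t_*=s_*\circ(-w_p)$ sur $\mathcal{L}$. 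On en d\'eduit $t_*(a_E)=s_*(-w_p(a_E))=s_*(a_E)=(p+1)A_E$.

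Le point central de la preuve, et le seul point r\'eellement \`a justifier, est l'identification de $\card(\End(e)^*/\{\pm 1\})$ avec le cardinal du stabilisateur de $C_p$ sous $\Omega_k^*/\{\pm 1\}$, ainsi que la bijection entre ar\^etes de source $E_{j_k}$ et orbites de cette action~; ces deux faits d\'ecoulent directement de la description des ar\^etes comme couples $(E,C_p)$ \`a action de $\End(E)^*$ pr\`es. L'identit\'e $\sum_O 1/\card(\mathrm{Stab})=(p+1)/w_k$ qui en r\'esulte est ensuite purement formelle, et la sym\'etrie pour $t_*$ ne co\^ute rien une fois connue l'action de $w_p$.
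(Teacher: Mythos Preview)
Your proof is correct and follows essentially the same approach as the paper: both compute the coefficient of $E_{j_k}$ in $s_*(a_E)$ via the orbit-stabilizer formula for the action of $\Omega_k^*/\{\pm 1\}$ on the $p+1$ subgroups of order $p$. The only minor difference is that the paper handles $t_*$ by saying \og on montre de m\^eme\fg, whereas you deduce it from $s_*$ via the relation $t_*=s_*\circ(-w_p)$ and $w_p(a_E)=-a_E$; both arguments are valid and essentially equivalent.
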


\begin{proof}
Par d\'efinition : 
$$a_E=\sum_{i=1}^n \frac{1}{\card(\End(e_i)^*/\{\pm 1\})} e_i=\sum_{\topbot{(E,C_p)}{\text{\`a isom. pr\`es}}}\frac{1}{(\End(E,C_p)^*/\{\pm 1\})} (E,C_p).$$
O\`u $(E,C_p)$ parcourt l'ensemble des couples constitu\'e d'une courbe elliptique supersinguli\`ere $E$ en caract\'eristique $q$ et d'un sous groupe d'ordre $p$ de $E$
$$s_*(a_E)=\sum_{k=1}^{g+1} \bigg( \sum_{\topbot{(E_{j_k},C_p)}{\text{\`a isom. pr\`es}}} \frac{1}{\card(\End(E_{j_k},C_p)^*/\{\pm 1\})}  \bigg) E_{j_k}. $$
Pour $k$ fix\'e, il y a $p+1$ couples de la forme $(E_{j_k},C_p)$. Par la formule des classes pour le groupe $\End(E_{j_k})^*/\{\pm 1\}$ op\'erant sur l'ensemble des $(E_{j_k},C_p)$, on a : 
$$ \sum_{\topbot{(E_{j_k},C_p)}{\text{\`a isom. pr\`es}}} \frac{1}{\card(\End(E_{j_k},C_p)^*/\{\pm 1\})}=\frac{p+1}{\card(\End(E_{j_k})^*/\{\pm 1\})}.$$
Donc $s_*(a_E)=(p+1) A_E$. On montre de m\^eme que $t_*(a_E)=(p+1) A_E.$

\end{proof}

Prouvons maintenant notre r\'esultat principal : le th\'eor\`eme \ref{resfinal} dont l'\'enonc\'e est rappel\'e ci-dessous. 
\begin{thm}
Soit $q>245$ un nombre premier avec $q \equiv 3 \mod 4$. Il existe une borne $B_q$ d\'ependant de $q$ telle que si $p$ est un nombre premier v\'erifiant $p\equiv 1 \mod 4$, $\legendre{p}{q}=-1$ et $p\ge B_q$, alors la courbe $X^{pq}/w_q$ est sans point rationnel non sp\'ecial. 
\end{thm}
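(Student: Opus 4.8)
Le plan est de ramener l'\'enonc\'e au crit\`ere g\'en\'eralis\'e de Parent et Yafaev (th\'eor\`eme \ref{parentyafaevgeneralise}). Nous commen\c cons par v\'erifier que les hypoth\`eses de congruence sont r\'eunies : comme $p\equiv 1\mod 4$, la loi de r\'eciprocit\'e quadratique donne $\legendre{q}{p}=\legendre{p}{q}=-1$, de sorte que les conditions $q>245$, $q\equiv 3\mod 4$, $p\equiv 1\mod 4$ et $\legendre{q}{p}=-1$ sont bien satisfaites. Il suffira donc, pour $p\gg q$, de construire un chemin ferm\'e $C\in Y\cap\mathbb{E}$ sur $\mathscr{G}(X^{pq}_{\F_p})$ contenant les deux ar\^etes exceptionnelles de longueur $2$ avec une multiplicit\'e premi\`ere \`a $p$.

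L'id\'ee centrale consiste \`a exploiter l'ordre $O_{-4}=\Z[\zeta_4]$ : la condition $p\equiv 1\mod 4$ force $p$ \`a \^etre scind\'e dans $O_{-4}$, tandis que $q\equiv 3\mod 4$ le rend inerte. Nous fixons un nombre premier $l\neq q$ ne d\'ependant que de $q$, puis nous appliquons la proposition \ref{Eisenstein} \`a $D=-4$ : le vecteur d'Eisenstein-modulaire s'\'ecrit comme une combinaison $\Q$-lin\'eaire finie $A_E=\sum_n c_n\Gamma_{-4l^{2n}}$, dont les coefficients $c_n$ ne d\'ependent que de $q$. Pour chaque $n\ge 1$ l'ordre $O_{-4l^{2n}}$ v\'erifie encore $q$ inerte et $p$ scind\'e, et comme $-4l^{2n}\neq -4,-3$, la proposition \ref{intersection} assure, pour $p$ sup\'erieur \`a une borne ne d\'ependant que de $q$, que le vecteur de Gross-Shimura $\gamma_{-4l^{2n}}$ ne rencontre aucune des ar\^etes exceptionnelles de longueur $2$ ou $3$. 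Le corollaire \ref{vecteurdegrossshimura} fournit alors $s_*(\gamma_{-4l^{2n}})=t_*(\gamma_{-4l^{2n}})=4\Gamma_{-4l^{2n}}$. En posant $C=\sum_n c_n\gamma_{-4l^{2n}}$, on obtient $s_*(C)=t_*(C)=4A_E$.

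Nous formons ensuite le cycle candidat $C_0=(p+1)C-4a_E$. La proposition \ref{repar} donne $s_*(a_E)=t_*(a_E)=(p+1)A_E$, d'o\`u $s_*(C_0)=t_*(C_0)=4(p+1)A_E-4(p+1)A_E=0$, ce qui montre $C_0\in Y$. Comme $C_0$ est de degr\'e nul et s'exprime comme combinaison de vecteurs de Gross (le vecteur d'Eisenstein-Shimura \'etant lui-m\^eme une telle combinaison, cf. la remarque suivant la proposition \ref{Eisenstein}), sa projection orthogonale sur $\mathcal{L}^0$, qui le fixe, est une combinaison de projections de vecteurs de Gross ; ainsi $C_0\in\mathbb{E}$. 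Puisque $C$ \'evite les ar\^etes exceptionnelles, le coefficient de chaque ar\^ete exceptionnelle de longueur $2$ dans $C_0$ provient uniquement du terme $-4a_E$ et vaut $-4\times\frac{1}{2}=-2$ ; l'invariance $w_q(\gamma_D)=\gamma_D$ (proposition \ref{actionatkinlehner}) et $w_q(a_E)=a_E$ garantissent que les deux ar\^etes exceptionnelles, \'echang\'ees par $w_q$ (corollaire \ref{epaisseur2}), portent le m\^eme coefficient. En multipliant $C_0$ par l'entier $M$, ne d\'ependant que de $q$, qui chasse les d\'enominateurs des $c_n$ et le $\frac{1}{2}$ de $a_E$, on obtient un cycle entier $MC_0\in Y\cap\mathbb{E}$ dont les ar\^etes exceptionnelles ont multiplicit\'e $2M$. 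Pour $p\ge B_q$ avec $B_q$ assez grand — en particulier sup\'erieur \`a $2M$ ainsi qu'aux bornes fournies par la proposition \ref{intersection} et par le th\'eor\`eme \ref{parentyafaevgeneralise} — cette multiplicit\'e est premi\`ere \`a $p$, et le th\'eor\`eme \ref{parentyafaevgeneralise} permet de conclure.

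Le point d\'elicat sera la gestion de l'uniformit\'e en $p$ : la condition \og $p$ scind\'e \fg\ interdit en g\'en\'eral de fixer un discriminant ind\'ependant de $p$, et c'est pr\'ecis\'ement le choix de $O_{-4}$, pour lequel le scindage de $p$ est automatique d\`es que $p\equiv 1\mod 4$, qui fournit une famille de \og bons \fg\ ordres ind\'ependante de $p$. Il faudra alors contr\^oler que tous les param\`etres auxiliaires — les coefficients $c_n$, le d\'enominateur $M$ et la borne issue de la proposition \ref{intersection} — ne d\'ependent que de $q$, afin que la borne finale $B_q$ n'en d\'epende elle-m\^eme que de $q$.
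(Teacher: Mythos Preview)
Your proposal is correct and follows essentially the same strategy as the paper's proof: write $A_E$ as a $\Q$-combination of the $\Gamma_{-4l^{2n}}$ via Proposition~\ref{Eisenstein}, lift to the Shimura graph, and subtract a suitable multiple of $a_E$ to obtain a cycle whose exceptional-edge multiplicities come solely from the Eisenstein term. The only cosmetic difference is bookkeeping: the paper clears denominators at the outset by writing $\lambda_0 A_E=\sum\lambda_n\Gamma_{-4l^{2n}}$ with integral $\lambda_n$ and then forms $C_0=(p+1)C-4\lambda_0 a_E$, whereas you keep rational coefficients $c_n$ and clear denominators at the end with a factor $M$; the resulting bounds on $p$ (namely $p\nmid\lambda_0$ versus $p>2M$) play the same role.
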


\begin{proof}
Soit $l\neq p,q$ un nombre premier. D'apr\`es la proposition \ref{Eisenstein}, le vecteur d'Eisenstein modulaire $A_E$ sur $\mathscr{G}((X_0(q))_{\F_q})$ appartient au $\Q-$espace engendr\'e par les vecteurs de Gross de la forme $\Gamma_D$, pour $D=-4l^{2n}$, o\`u $n\ge 1$. Ces vecteurs $\Gamma_D$ correspondent \`a des sous-ordres stricts $O_D$ de l'ordre $O_{-4}$ de conducteurs $l^n$. On peut donc \'ecrire $A_E$ comme combinaison lin\'eaire de ces vecteurs : 
$$\lambda_0A_E =\sum_{n=1}^{N} \lambda_n \Gamma_{-4l^{2n}},$$
avec $\lambda_n\in\Z$ et $\lambda_0\neq 0$. Pour ne pas avoir de d\'enominateurs on choisit tous les $\lambda_n$ multiples de $12$. 
D'apr\`es la proposition \ref{intersection}, il existe $B$ une borne telle que, pour $p\ge B$, chacun des vecteurs $\gamma_{-4l^{2n}}$ soit sans ar\^ete commune avec les vecteurs $\gamma_{-4}$ et $\gamma_{-3}$. On suppose dans la suite $p\ge B$, de telle sorte que les vecteurs $\Gamma_{-4l^{2n}}$ ne contiennent pas d'ar\^ete exceptionnelle de longueur $2$ ou $3$. On impose en outre que $p$ ne divise pas $\lambda_0$. On suppose \'egalement que $p \gg q$ de telle sorte qu'on puisse appliquer le th\'eor\`eme \ref{parentyafaevgeneralise}. 

Consid\'erons le chemin en vecteurs de Gross $C=\sum_{n=1}^{N} \lambda_n \gamma_{-4l^{2n}}$ sur le graphe dual de la fibre en $p$ de la courbe de Shimura $X^{pq}$. D'apr\`es l'hypoth\`ese, $p\equiv 1 \mod 4$, $p$ est scind\'e dans $O_{-4}$ et dans tous ses sous-ordres stricts $O_{-4l^{2n}}$. En outre chacun de ces vecteurs ne contient que des ar\^etes de longueur $1$. On peut appliquer le corollaire \ref{vecteurdegrossshimura} : 
$$ s_*(C)=\sum_{n=1}^{N} \lambda_n s_*(\gamma_{-4l^{2n}})=\sum_{n=1}^{N} \lambda_n 4\Gamma_{-4l^{2n}}=4\lambda_0A_E.$$
De m\^eme $t_*(C)=4\lambda_0(A_E)$. D'autre part d'apr\`es la proposition \ref{repar}, on a : $s_*(a_E)=t_*(a_E)=(p+1)A_E$. 
Le chemin $C_0=(p+1)C-4\lambda_0 a_E$, est un chemin ferm\'e. Pour cela il suffit de voir que $s_*(C_0)=t_*(C_0)=0$. 

D'autre part, le chemin $C_0$ est la projection orthogonale pour l'accouplement de monodromie du chemin en vecteurs de Gross $(p+1)C$ sur l'espace $\mathcal{L}^{0}$ des diviseurs de degr\'e $0$. Donc $C_0$ appartient \`a $\mathbb{E}$. Finalement $C_0$ est un chemin de $\mathbb{E}\cap Y$. D'apr\`es les hypoth\`eses faites sur $p$, les vecteurs du chemin $C$ ne contiennent pas d'ar\^ete exceptionnelle de longueur $2$. D'autre part le vecteur d'Eisenstein $a_E$ contient chacune des deux ar\^etes exceptionnelle de longueur $2$ avec multiplicit\'e $1/2$. Le chemin $C_0$ passe par chacune de ces ar\^etes exceptionnelles avec multiplicit\'e $2\lambda_0$. Donc on peut appliquer le th\'eor\`eme de Parent-Yafaev \`a ce chemin. On en d\'eduit que la courbe de Shimura $X^{pq}/w_q$ est sans point rationnel non sp\'ecial.

\end{proof}

\end{document}